\title[Coexistence of Simple Spectrum]{Coexistence of Measures with Simple Lyapunov Spectrum for Fiber Bunched Cocycles}
\author{Jonathan DeWitt and Daniel Mitsutani}
\address[Mitsutani]{Department of Mathematics, the University of Chicago, Chicago, IL, USA, 60637}
\email{mitsutani@math.uchicago.edu}
\address[DeWitt]{Department of Mathematics, the University of Maryland, College Park, MD, USA, 20742}
\email{dewitt@umd.edu}
\date{\today}
\theoremstyle{plain}
\newtheorem{theorem}{Theorem}[section]
\newtheorem{thmy}{Theorem}
\newtheorem{proposition}[theorem]{Proposition}
\newtheorem{lemma}[theorem]{Lemma}
\newtheorem{claim}[theorem]{Claim}
\newtheorem{definition}[theorem]{Definition}
\theoremstyle{remark}
\newtheorem{remark}[theorem]{Remark}
\newtheorem{example}[theorem]{Example}
\def\hx{\hat x}
\def\hSigma{\hat \Sigma}
\def\hsigma{\hat \sigma}
\def\hm{\hat m}
\def\hq{\hat q}
\def\heta{\hat \eta}
\def\hy{\hat y}
\def\hmu{\hat \mu}
\def\hnu{\hat \nu}
\def\hz{\hat z}
\def\title{\em}
\def\id{\hbox{id}}
\def\hA{\hat{A}}
\def\hSigma{\hat{\Sigma}}
\def\hsigma{\hat{\sigma}}
\def\hmu{\hat{\mu}}
\def\hm{\hat{m}}
\def\hx{\hat{x}}
\def\cL{\mathcal{L}}
\def\cS{\mathcal{S}}
\def\P{\mathbb{P}}
\newcommand{\Gr}{\operatorname{Gr}}
\newcommand{\SL}{\operatorname{SL}}
\newcommand{\GL}{\operatorname{GL}}
\newcommand{\Sp}{\operatorname{Sp}}
\newcommand{\abs}[1]{\left| #1 \right|}
\newcommand{\mc}[1]{\mathcal{ #1 }}
\newcommand{\wt}[1]{\widetilde{ #1 }}
\def\hp{\hat p}
\def\hB{\hat{B}}
\def\hsigma{\hat{\sigma}}
\def\transverse{\,\raise2pt\hbox to1em{\hfil$\top$\hfil}\hskip -1em \hbox
to1em{\hfil$\cap$\hfil}\,} 
\newcommand\R{\mathbb R}
\newcommand\N{{\mathbb N}}
\newlength{\figboxwidth} \setlength{\figboxwidth}{5.8in}
\begin{document}

\begin{abstract} 

We prove that if a Hölder continuous fiber-bunched cocycle $\hA$ over an invertible hyperbolic transitive shift $\hSigma$ satisfies an appropriate strong irreducibility condition on Grassmannians, then $\hSigma$ admits an ergodic measure $\hmu$ with full support and product structure with simple Lyapunov spectrum if and only if any other ergodic measure with full support and product structure also has simple Lyapunov spectrum.
\end{abstract}

\maketitle

\section{Introduction}
Lyapunov exponents describe the exponential growth or decay of norms of products of matrices which in many settings are used to determine different ergodic and dynamical properties of the system. The question of existence of a non-zero exponent has been widely studied with such applications in mind. A further question is whether all Lyapunov exponents have multiplicity 1, referred to as simple Lyapunov spectrum, which provides more detailed information on ergodic averages, and implies in many settings the existence of one dimensional invariant foliations for the system. 
	
The study of criteria for non-zero exponents and simple spectrum for iid random products of matrices traces back to works of Furstenberg \cite{furstenberg1963noncommuting}, Guivarch and Raugi \cite{guivarch1984random}, and Goldsheid and Margulis \cite{Goldsheid1989lyapunov}, among many others. In all of these works, a common feature is an irreducibility hypothesis on the set of matrices generating the random products which ensures that cancellations of the expanding and contracting directions of the matrices do not occur almost surely. Under such irreducibility hypotheses, their theorems give existence of a non-zero Lyapunov exponent \cite{furstenberg1963noncommuting} or simple spectrum \cite{guivarch1984random}, provided that the matrices generating the products satisfy necessary contraction and expansion properties. 

Motivated by applications for partially hyperbolic systems and holomorphic foliations, these criteria were later extended (\cite{bonatti2003},  \cite{bonattiviana}) and refined (\cite{avila2007simplicity}, \cite{avila2007simplicity1}) to apply to the setting of fiber-bunched matrix cocycles, where the product of matrices is formed as a skew product over a driving hyperbolic dynamical system, giving non-trivial and simple Lyapunov spectrum with respect to any measure satisfying a natural assumption known as local product structure. Fiber-bunching is a partial hyperbolicity-type condition which ensures the existence of holonomies for the cocycle, which in turn allows for many of the random features from the iid setting to apply in this setting.

In these works, the criteria found for non-trivial (i.e. not all zero) or simple (i.e. all distinct) Lyapunov exponents is a concrete setup, expressed as a transversality condition, which provides  \textit{sufficient} criterion for the existence of non-zero exponents \cite{bonatti2003} and simple spectrum \cite{bonattiviana}, respectively.  These criteria in turn have been used to prove simplicity of spectrum generically for: fiber-bunched cocycles over hyperbolic systems \cite{avila2007simplicity}, certain cocycles over partially hyperbolic systems \cite{poletti2019simple}, and for fiber-bunched Anosov flows and geodesic flows in pinched negative curvature \cite{mitsutani}. It has also been applied to specific situations such as in the solution of the Kontsevich-Zorich conjecture in Teichmuller dynamics \cite{avila2007simplicity1}. 

All criteria found so far for simple spectrum with respect to all reasonable measures in the fiber-bunched setting so far have only been sufficient criteria, in the form of existence a particular setup over a periodic point. The goal of this paper is to provide a criterion for fiber-bunched cocycles over hyperbolic systems to satisfy simple  Lyapunov spectrum with respect to a natural class of measures under an irreducibility hypothesis in the spirit of Guivarch and Raugi \cite{guivarch1984random}. The advantage of this approach is that an irreducibility hypothesis is evidently a \textit{necessary} condition to guarantee that all measures have simple Lyapunov spectrum for a non-trivial reason.

Our main result shows that an irreducible fiber-bunched cocycle with one reasonable measure with simple Lyapunov spectrum in fact must have simple Lyapunov spectrum with respect to all reasonable measures.

\begin{thmy} \label{theorem: simplespectrum}
Let $\hat{\Sigma}$ be a transitive invertible subshift of finite type and $\hat{A}\colon \hSigma\to \SL(d,\R)$ a fully irreducible H\"older continuous fiber bunched cocycle. 
 Suppose there exists an ergodic, fully supported, invariant measure $\hmu$ with continuous local product structure on $\hat{\Sigma}$ with simple Lyapunov spectrum. Then any other ergodic, fully supported, invariant measure $\heta$ with continuous local product structure on $\hat{\Sigma}$ also has simple Lyapunov spectrum.
\end{thmy}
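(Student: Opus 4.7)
The plan is to reduce the theorem to the pinching-and-twisting criterion of Bonatti-Viana and Avila-Viana for simple Lyapunov spectrum: once a pinching-twisting pair is constructed from the data of $\hmu$ and the full irreducibility of $\hA$, the criterion yields simple spectrum for every ergodic invariant measure with continuous local product structure, and in particular for $\heta$.

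First, I would extract a pinching periodic orbit from $\hmu$. Since $\hmu$ is ergodic, fully supported, and has simple Lyapunov spectrum, Kalinin's periodic approximation theorem for H\"older fiber-bunched cocycles yields periodic orbits $p_n\in\hSigma$ whose period matrices $\hA^{\mathrm{per}(p_n)}(p_n)$ have Lyapunov spectra converging to that of $\hmu$; for $n$ large, $p=p_n$ is pinching in the sense that its return matrix has $d$ distinct eigenvalue moduli. Next, for twisting, I would seek a homoclinic loop at $p$ such that the associated stable/unstable holonomy cycle sends the eigenflag of the period matrix at $p$ to a flag in general position with respect to itself. If no such configuration existed, every homoclinic holonomy at $p$ would preserve some nontrivial incidence relation between the eigensubspaces; combining continuity of holonomies, density of homoclinic orbits of $p$ in $\hSigma$, and $\hA$-equivariance should then produce a globally defined continuous $\hA$-invariant family of proper subspaces, contradicting full irreducibility. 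The resulting pinching-twisting pair falls into the setup of the Avila-Viana simplicity theorem, whose proof uses only the fiber bunching of $\hA$, the hyperbolicity of $\hSigma$, and the continuous local product structure of the reference measure, and hence yields simple spectrum for $\heta$.

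The principal obstacle is the twisting step. Full irreducibility is a topological-dynamical hypothesis about the absence of continuous invariant sections of Grassmannian bundles, whereas twisting is an algebraic statement about how homoclinic holonomies move a single eigenflag. Bridging the two requires promoting a failure of twisting---initially only a condition along homoclinic orbits of $p$---to a globally defined, continuous, $\hA$-invariant section. Because the eigenflag of $p$ has finitely many components and the flag variety admits only finitely many types of proper subvarieties that can trap them, a combinatorial bookkeeping argument, together with the continuous dependence of holonomies on the base and density of homoclinic orbits, should upgrade the pointwise obstruction into such a global section. An alternative route, which I would use as a backup, is to apply the Avila-Viana invariance principle directly: if $\heta$ has non-simple spectrum, one obtains a measurable holonomy-invariant structure on some Oseledets subbundle for $\heta$, then uses fiber bunching and continuous local product structure to make it continuous on $\hSigma$, and finally contradicts full irreducibility---here the existence of $\hmu$ with simple spectrum is used to rule out the conformal case that the invariance principle leaves open.
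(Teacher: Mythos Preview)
Your proposal has a genuine gap at the twisting step, which you correctly flag as the principal obstacle but do not close. Failure of twisting at a pinching periodic point $p$ means that for each homoclinic loop $z$ the holonomy $\psi_z$ violates \emph{some} transversality relation among subspaces built from the eigenflag of $p$; but which relation fails may vary with $z$, and your sketch offers no mechanism to assemble these heterogeneous pointwise obstructions into a single continuous $\hA$-invariant family of linear arrangements over all of $\hSigma$. Note that after extracting $p$ via Kalinin's theorem you make no further use of $\hmu$, so if your argument worked it would show that a single pinching periodic orbit together with full irreducibility already forces simple spectrum for every measure with continuous product structure---a statement the paper explicitly leaves open. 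Your backup via the invariance principle is also incomplete: that principle yields holonomy invariance only when \emph{all} exponents of the relevant cocycle coincide, whereas non-simplicity only gives $\lambda_k(\heta)=\lambda_{k+1}(\heta)$ for some $k$; passing to $\Lambda^k\R^d$ does not help, since full irreducibility on $\Gr(k,d)$ does not imply strong irreducibility on $\Lambda^k\R^d$ (e.g.\ $\Sp(4,\R)$), and restricting to the measurable Oseledets subbundle for $\heta$ destroys fiber bunching.

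The paper avoids the localized twisting problem entirely by using $\hmu$ throughout, not just to produce one periodic orbit. Strong irreducibility is first used to show that the one-sided conditionals of any $u$-state give zero mass to hyperplane sections of $\Gr(k,d)$ (Proposition~\ref{smoothconditionals}); combined with $\lambda_k(\hmu)>\lambda_{k+1}(\hmu)$ this forces the $\hmu$-$u$-state conditionals to be $\delta_{E^u_k(\hat{x})}$, so that along each local stable leaf the Oseledets subspaces $E^u_k$ for $\hmu$ avoid any fixed hyperplane section on a set of positive conditional measure (Claim~\ref{claim: avoidhyperplanes}). This is a \emph{non-localized} twisting device: instead of one good homoclinic holonomy at $p$, one constructs a finite collection of rectangles (Proposition~\ref{prop:twisting_sets}) such that any hyperplane section is avoided by the $E^u_k$-direction carried through one of them. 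The core of the proof (Section~\ref{sec: dirac}) then shows directly, by routing backward $\heta$-orbits through these rectangles and through a shadowing of a pinching periodic point, that the conditionals of any $u$-state for $\heta$ are Dirac masses; the gap $\lambda_k(\heta)>\lambda_{k+1}(\heta)$ is then extracted in Section~\ref{sec: proof} by the standard adjoint-cocycle argument. The full support and product structure of $\hmu$ are used essentially in this construction, which is why the paper cannot dispense with $\hmu$ in favor of a single orbit.
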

The complete definitions in the theorem above are provided in Section \ref{sec:defs}. The condition \textit{full irreducibility} is defined in Section \ref{sec:smoothness}. For fiber-bunched cocycles, it is the analogue on all Grassmannians $\Gr(k,d)$, $1\le k\le d$, of strong irreducibility for iid~random products of matrices. We remark that full irreducibility is a strictly weaker hypothesis than the transversality condition required in \cite{bonatti2003} and other works, which is often referred to as twisting. 

 We will actually prove a finer result than Theorem \ref{theorem: simplespectrum}. Theorem \ref{theorem: simplespectrum} is proved by  individually showing that each Lyapunov exponent must have multiplicity 1. In fact, its proof is immediate from:
{\color{red} }
\begin{thmy} \label{theorem: maintheorem}
Let $1 \leq k \leq d-1$, $\hat{\Sigma}$ a transitive subshift of finite type and $\hat{A}\colon \hSigma\to \SL(d,\R)$ a H\"older continuous fiber bunched cocycle strongly irreducible on $\Gr(k,d)$ and on $\Gr(d-k, d)$.

Suppose there exists an ergodic, fully supported, invariant measure $\hmu$ on $\hat{\Sigma}$ with continuous local product structure  and $\lambda_k(\hmu) > \lambda_{k+1}(\hmu)$. Then any other ergodic, fully supported, invariant measure $\heta$ with continuous local product structure on $\hat{\Sigma}$ also has $\lambda_k(\heta) > \lambda_{k+1}(\heta)$.
\end{thmy}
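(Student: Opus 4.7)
The plan is to first promote the gap $\lambda_k(\hmu) > \lambda_{k+1}(\hmu)$ into a globally defined continuous $\hat A$-invariant splitting of $\R^d$ over $\hat\Sigma$, and then to use the Avila--Viana invariance principle together with strong irreducibility to rule out failure of the gap for any other ergodic, fully supported $\heta$ with continuous local product structure. From Oseledets' theorem applied to $\hmu$ one obtains at $\hmu$-a.e.\ $x$ a $k$-dimensional invariant subspace $\xi_\mu(x)$ (the sum of the top $k$ Oseledets subspaces) and a complementary $(d-k)$-dimensional invariant subspace $\eta_\mu(x)$. Because $\hat A$ is fiber bunched and the Lyapunov gap lies at position $k$, a Kalinin--Sadovskaya-style argument yields that both $\xi_\mu$ and $\eta_\mu$ are invariant under the stable and unstable holonomies of $\hat A$. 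Combining this holonomy invariance with the continuous local product structure and full support of $\hmu$, one extends the $\hmu$-a.e.\ defined sections to H\"older continuous $\hat A$-invariant sections $\xi : \hat\Sigma \to \Gr(k,d)$ and $\eta : \hat\Sigma \to \Gr(d-k,d)$ on all of $\hat\Sigma$, with $\xi(x) \oplus \eta(x) = \R^d$ at every point (the transversality passes from a dense full-measure set to the whole base by continuity). The strong irreducibility hypotheses on $\Gr(k,d)$ and $\Gr(d-k,d)$ are what guarantee that $\xi_\mu, \eta_\mu$ are the unique such measurable invariant sections, making the continuous extension unambiguous.

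Now suppose for contradiction that $\lambda_k(\heta) = \lambda_{k+1}(\heta)$ for some ergodic, fully supported $\heta$ with continuous local product structure. This equality is equivalent to the top two Lyapunov exponents of the induced cocycle $\Lambda^k \hat A$ coinciding under $\heta$, i.e.\ to the vanishing of the top Lyapunov exponent of the induced projective cocycle on $\P(\Lambda^k \R^d)$, or equivalently of the associated cocycle on the bundle $\Gr(k,d) \times \hat\Sigma$. The Avila--Viana invariance principle, applicable because $\hat A$ is fiber bunched and $\heta$ has continuous local product structure, then forces any such invariant fiber measure to have a disintegration $\{m_{\heta,x}\}$ that is invariant under both the stable and unstable holonomies of the induced cocycle. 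Applied to the natural $\heta$-Oseledets distribution on $\Gr(k,d)$, which is genuinely non-Dirac at $\heta$-a.e.\ $x$ because there is a positive-dimensional Grassmannian of valid top $k$-dimensional Oseledets subspaces, this produces a non-trivial holonomy-invariant, $\hat A$-equivariant fiber measure on $\Gr(k,d)$ distinct from the Dirac section $x \mapsto \delta_{\xi(x)}$ built in the first paragraph.

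The main obstacle is to derive a contradiction from the existence of this second invariant fiber measure using the strong irreducibility of $\hat A$ on $\Gr(k,d)$. In the spirit of Furstenberg's rigidity lemma, two distinct $\hat A$-equivariant holonomy-invariant fiber measures on $\Gr(k,d)$ should force the cocycle together with its holonomies to preserve a proper, closed, fiberwise subset of $\Gr(k,d)$; full support of $\heta$ together with continuity then propagates this to a proper $\hat A$-invariant sub-object of $\Gr(k,d)$ over all of $\hat\Sigma$, contradicting strong irreducibility on $\Gr(k,d)$. Making this rigorous---specifically, showing that the holonomies together with the cocycle generate a sufficiently rich family of maps on each fiber to apply a Furstenberg-type rigidity statement---is the technical heart of the argument, and is precisely where the strong irreducibility hypotheses enter in an essential way.
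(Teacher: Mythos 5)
Your first paragraph contains a fatal error that the rest of the argument cannot recover from. You claim that, because $\hat A$ is fiber bunched and $\hmu$ has a Lyapunov gap at position $k$, the Oseledets bundles $\xi_\mu = E^u_k$ and $\eta_\mu = E^s_{d-k}$ are invariant under \emph{both} the stable and unstable holonomies and hence extend to continuous $\hat A$-invariant sections over all of $\hat\Sigma$. This is not true and is not what Kalinin--Sadovskaya-type results give. The bundle $E^u_k$ depends only on the past of the orbit and is therefore invariant under unstable holonomies (this is Lemma~\ref{lem:unstable_bundles_cts_along_unstable_leaves} in the paper), but it is \emph{not} invariant under stable holonomies; symmetrically $E^s_{d-k}$ is invariant only under stable holonomies. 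As a consequence $E^u_k$ is continuous along unstable leaves but only measurable transversally, and there is no continuous global extension. If such a continuous invariant splitting existed you would essentially have a dominated splitting at index $k$, and the theorem would follow trivially; the hypotheses do not give this, and the whole difficulty of the problem (and the entire content of Section~\ref{sec: dirac} of the paper) is precisely that $E^u_k$ is \emph{not} continuous in the stable direction. In fact, the paper's key technical device is to exploit the ``randomness'' of the distribution of $E^u_k$ along local stable leaves (Proposition~\ref{prop: ustateslieondirac} and Claim~\ref{claim: avoidhyperplanes}) as a non-uniform twisting mechanism; your proposal asserts the opposite --- that this distribution is in fact continuous and rigid --- which collapses the problem incorrectly.

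The remaining paragraphs inherit the error and are also not precise enough to stand on their own. When $\lambda_k(\heta) = \lambda_{k+1}(\heta)$ there is no canonical ``$\heta$-Oseledets distribution on $\Gr(k,d)$'' to point at: you would need to work with $u$-states for $\heta$ (which do exist by Proposition~\cite[Prop.~4.2]{avila2007simplicity}), but then showing that their fiber conditionals are \emph{not} Dirac is itself a substantive claim, and deriving a contradiction from the coexistence of two holonomy-equivariant fiber measures via a ``Furstenberg-type rigidity lemma'' is, as you yourself note, unproved. The paper's actual route is quite different: it shows directly that the $u$-state conditionals for $\heta$ \emph{are} Dirac (Proposition~\ref{prop: convergencetodirac}), using a quantitative Besicovitch-type density argument (Lemma~\ref{lem:measure_estimate}) and the twisting rectangles built from $\hmu$-generic orbits near a pinching periodic point, and then extracts the Lyapunov gap for $\heta$ by the Avila--Viana determinant estimate applied to the resulting transverse sections $\xi^u$ and $\xi^s$ (the latter coming from the adjoint cocycle, which is where irreducibility on $\Gr(d-k,d)$ enters). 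None of these ingredients appears in your outline, and the first-paragraph claim cannot be repaired without reproducing essentially the paper's construction.
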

The precise definition of \textit{strong irreducibility on} $\Gr(k,d)$ is given in Section \ref{sec:smoothness}. Full irreducibility is defined as strong irreducibility on $\Gr(k,d)$ for each $1 \leq k \leq d-1$ so it is clear that Theorem \ref{theorem: simplespectrum} follows from Theorem \ref{theorem: maintheorem}.

\begin{remark} Although requiring a condition on $\Gr(k,d)$ and $\Gr(d-k,d)$ might seem unexpected, one may compare with \cite[Thm.~1]{bonattiviana}, which under a certain twisting condition establishes simultaneously that both the top and bottom Lyapunov exponent are one dimensional.
\end{remark}

 In \cite[Thm.~1]{bonatti2003}, Bonatti, Gomez-Mont, and Viana prove an alternative for fiber-bunched cocycles: either every equilibrium state has at least two distinct Lyapunov exponents or there exists a holonomy invariant, cocycle invariant measure on its projective extension. This, in turn, should imply either a failure of irreducibility or conformality of the cocycle (which implies that all exponents are $0$ for all invariant measures). For the sake of completeness, we include this statement in the following theorem:

\begin{thmy}\label{thm:furstenberg_analog}
Suppose that $\hat{\Sigma}$ is a transitive hyperbolic system and that $\hat{A}\colon \Sigma\to \SL(d,\R)$ is a fiber-bunched H\"older continuous cocycle that is strongly irreducible on $\Gr(1,d)$. Then exactly one of the following holds:
\begin{enumerate}
\item
For every measure $\hat{\mu}$ with full support and continuous product structure, $\hat{\mu}$ has two distinct Lyapunov exponents, or;
\item
$\hat{A}$ preserves a H\"older continuous conformal structure, and thus every invariant measure has all Lyapunov exponents equal to $0$.
\end{enumerate}
\end{thmy}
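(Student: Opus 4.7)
The plan is to prove the contrapositive. Suppose (1) fails; since $\hat{A}$ takes values in $\SL(d,\R)$, there then exists an ergodic, fully supported measure $\hmu$ with continuous product structure for which every Lyapunov exponent of $\hat{A}$ vanishes. I will construct a H\"older continuous conformal structure on $\R^d$ preserved by $\hat{A}$, which yields (2) and simultaneously forces every invariant measure to have all exponents zero.

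Step 1 is the invariance principle in the fiber-bunched form of \cite{bonatti2003}: vanishing of all Lyapunov exponents with respect to $\hmu$ combined with continuous local product structure produces an $F$-invariant probability $m$ on $\hSigma \times \P(\R^d)$ projecting to $\hmu$, whose disintegrations $\{m_x\}$ are invariant under the canonical stable and unstable holonomies of $\hat{A}$. Continuous local product structure, full support of $\hmu$, and H\"older regularity of the holonomies granted by fiber bunching together allow one, by a now-standard argument, to redefine $\{m_x\}$ on an $\hmu$-null set to a family which is H\"older continuous in the weak-$\ast$ topology and defined on all of $\hSigma$. Step 2 uses strong irreducibility on $\Gr(1,d)$: via the Furstenberg lemma adapted to cocycle- and holonomy-invariant measures, $m_x(W) = 0$ for every proper subspace $W \subset \R^d$; otherwise, the finite family of subspaces of minimal dimension realising the maximal $m_x$-mass would form a H\"older invariant section contradicting irreducibility.

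Step 3 constructs the conformal structure via a Zimmer/barycenter construction. On the symmetric space $\cS := \{P = P^\top > 0 : \det P = 1\}$, the functional
\[
\phi_x(P) := \int_{\P(\R^d)} \log\bigl(v^\top P v / |v|^2\bigr) \, dm_x([v])
\]
is strictly geodesically convex and, by non-degeneracy together with compactness of $\hSigma$, uniformly proper, so it admits a unique minimiser $J_x \in \cS$. Change of variables in $(\hat{A}(x))_\ast m_x = m_{\hsigma x}$ yields $\phi_{\hsigma x}(P) = \phi_x(\hat{A}(x)^\top P \hat{A}(x)) + c(x)$, and comparing minimisers gives
\[
J_{\hsigma x} = \hat{A}(x)^{-\top} J_x \hat{A}(x)^{-1},
\]
with the determinant-one constraint preserved since $\hat{A} \in \SL(d,\R)$. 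H\"older continuity of $J_x$ follows from H\"older continuity of $m_x$ and the implicit function theorem applied at the non-degenerate minimiser. Hence each $\hat{A}(x)$ is an isometry $(\R^d, J_x) \to (\R^d, J_{\hsigma x})$, all iterate norms in the $J$-metrics are uniformly bounded, and every invariant measure has all Lyapunov exponents equal to zero.

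The principal technical hurdle is the H\"older extension of $\{m_x\}$ in Step 1 together with the uniform properness of $\phi_x$ needed in Step 3; both rely essentially on full support, continuous product structure, and H\"older regularity of the fiber-bunched holonomies. Modulo these, the remaining work is standard convex analysis on the symmetric space.
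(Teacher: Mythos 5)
Your proposal is correct, but it takes a genuinely different and longer route from the paper.

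The paper's proof is a near-immediate corollary of the continuous amenable reduction theorem of Kalinin and Sadovskaya \cite[Thm.~3.4]{kalinin2013cocycles}: if some fully supported measure with continuous product structure has a single Lyapunov exponent, then on a finite cover the cocycle preserves a continuous flag with invariant conformal structures on the successive quotients; projecting that flag back to the base produces a continuous $\hat{A}$-invariant family of linear arrangements in $\Gr(1,d)$, which strong irreducibility forces to be trivial, leaving a single conformal structure on $\R^d$ itself. Irreducibility is used only once, at the very end, to collapse the flag.

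You instead rebuild the conformal structure by hand. Your Step~1 (invariance principle from \cite{bonatti2003}, then H\"older extension of the disintegrations $\{m_x\}$ via the stable and unstable holonomies plus full support), your Step~2 (a Furstenberg-type maximal-mass argument, which is essentially the paper's own Proposition~\ref{smoothconditionals} and which you could simply cite), and your Step~3 (barycenter on the symmetric space $\SL(d,\R)/\SO(d)$ via the Busemann-type functional $\phi_x$) together reconstruct, in the irreducible special case, exactly what the Kalinin--Sadovskaya theorem packages. The computation $J_{\hsigma x}=\hat{A}(x)^{-\top}J_x\hat{A}(x)^{-1}$ and the observation that this makes each $\hat{A}(x)$ an isometry are correct, and irreducibility enters earlier in your argument: it is what guarantees no proper subspace carries positive $m_x$-mass, which is precisely the condition $m_x(W)<\dim W/d$ needed for strict convexity and uniform properness of $\phi_x$ (uniformity following from compactness of $\hSigma$ and continuity of $x\mapsto m_x$).

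Both arguments are sound. The paper's is shorter and also covers the reducible situation automatically (one simply doesn't collapse the flag), at the cost of invoking a substantial black box; yours is more self-contained but re-derives the barycenter construction and the H\"older extension of $su$-states, steps that hide real work behind the phrase ``by a now-standard argument.'' If you keep your route, the two places that need to be fleshed out are the H\"older extension of $\{m_x\}$ (this requires a Lusin/density argument together with $su$-accessibility, not just holonomy invariance on a full-measure set) and the properness of $\phi_x$ (where the precise sufficient condition is $m_x(W)<\dim W/d$ for every proper $W$, which your zero-mass statement gives).
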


 In view of this theorem, Theorem \ref{theorem: maintheorem} above is a version of this result which extends to the study of every Lyapunov exponent, and thus to the study of simplicity of Lyapunov spectrum.

 \subsection{Prior Results.} To better frame the results of this paper, we now discuss in some detail some of the earlier results mentioned above. 
 We begin with a review of the case of iid matrix products. Throughout, we let  $(A_1,\ldots,A_m)\subseteq \SL(d,\R)$, $d\ge 2$ and let $G$ denote the semigroup generated by the matrices $A_1,\ldots,A_m$.

The foundational result on the existence of non-zero Lyapunov exponents is the theorem of Furstenberg \cite{furstenberg1963noncommuting}: if $G$ is not compact and $G$ acts strongly irreducibly on $\R^d$, that is, $G$ does not preserve a non-trivial finite union of subspaces of $\R^d$, then almost surely the iid uniformly distributed random products of the matrices in $A_1,\ldots,A_m$ have two distinct Lyapunov exponents. Guivarch and Raugi \cite{guivarch1984random} improve on this theorem to obtain a criterion for simplicity of the top Lyapunov exponent. They prove that if  $G$ acts strongly irreducibly on $\R^d$ and there exists a sequence of elements $g_n\in G$ such that $\|g_n\|^{-1}g_n$ converges to a rank $1$ matrix, then $\lambda_1>\lambda_2$. This implies a related result for $\lambda_k>\lambda_{k+1}$ by considering the associated action on $\Lambda^k(\R^d)$. Finally, Goldsheid and Margulis \cite{Goldsheid1989lyapunov} improve this theorem to only have a hypothesis on the algebraic closure $X$ of $G$ in $\SL(d,\R)$. All of the theorems above can be seen as presenting an alternative under an irreducibility hypothesis. 

 In \cite{avila2007simplicity1}, the Lyapunov exponents of a locally constant cocycle over a shift equipped with a measure having a type of product structure is considered. They show that this cocycle has simple Lyapunov spectrum if it is pinching and twisting. Here, pinching and twisting reduce to an irreducibility condition on $k$-planes as well as a condition that says one can separate the singular value of matrices in the sense that for all $K>0$, one can find an element $A$ the cocycle acts by such that $\sigma_k(A)>K\sigma_{k+1}(A)$. In \cite{cambrainha2016typical}, it is shown that almost every locally constant symplectic cocycle has simple spectrum.

Outside of the locally constant setting, it is more difficult to obtain results about Lyapunov exponents of cocycles. We describe here work studying fiber bunched cocycles over hyperbolic bases. In \cite{bonatti2003}, those authors show that there is an open and dense subset of the space of fiber bunched cocycles such that for every equilibrium state there are two distinct Lyapunov exponents by identifying a particular obstruction to hyperbolicity: a family of $f$-invariant, holonomy invariant probability measures on every fiber of the bundle $\Sigma\times \mathbb{RP}^{d-1}$ that the cocycle acts on. Note that the result of \cite{bonatti2003} does not offer any information about which two Lyapunov exponents might be distinct nor their multiplicity. See also \cite[Theorem B]{avila2013holonomy}, where a similar result is obtained for cocycles over partially hyperbolic systems with $0$  center exponents. In \cite{viana2008almost}, Viana shows, without a fiber bunching assumption, that for a fixed ergodic measure with product structure almost all H\"older continuous cocycles have two distinct Lyapunov exponents.

In the fiber bunched setting, a condition known as \emph{pinching and twisting} is sufficient for a fiber bunched cocycle to have simple spectrum. In \cite{bonattiviana}, it is shown that a generic fiber bunched cocycle over a hyperbolic system has simple Lyapunov spectrum. This paper is one of the first where a type of pinching and twisting for fiber bunched cocycles appears. In \cite{avila2007simplicity}, a simpler formulation of the pinching and twisting condition is given.  Pinching and twisting has seen great application in the study of simplicity of Lyapunov spectrum. In \cite{backes2020simplicity}, it is shown that one can adapt the pinching and twisting criterion of Avila and Viana in order to show that, in a particular sense, generic cocycles over non-uniformly hyperbolic systems have simple Lyapunov spectrum. In \cite{Bessa2018positivity}, a version of \cite{viana2008almost} for cocycles taking value in a semi-simple Lie group is proved, which says that a generic cocycle has two distinct Lyapunov exponents even when the cocycle lies, for example, in $\Sp(n,\R)$. In \cite{poletti2019simple}, the twisting and pinching criterion is extended to apply to cocycles over certain partially hyperbolic diffeomorphisms.

\subsection{Discussion}
 When studying an abstract hyperbolic system, there is not always an obvious choice of measure that is the correct one to equip the system with. However, from the point of view of the thermodynamic formalism, equilibrium states provide the most natural choice of a measure. Consequently, it is conceptually satisfying to find criteria that apply to all these measures simultaneously. 


On a technical level, this paper develops an approach to studying the Lyapunov exponents of fiber bunched cocycles that uses a different sort of ``twisting" condition.  In \cite{avila2007simplicity1}, the twisting condition requires that the cocycle holonomy around a particular homoclinic loop satisfies a specific condition that it can move subspaces off of themselves. This condition is localized to a very specific periodic point $\hp$. In our setup, we have a measure $\hmu$ that has simple spectrum and another measure $\heta$ that we're interested in showing has simple spectrum. By using the Osceledec subspaces of $\hmu$ we are able to obtain a sort of non-uniform twisting condition that is not localized near any particular point of $\hat{\Sigma}$. The Osceledec subspaces associated to $\hmu$ are, in a certain sense, randomly distributed along local stable and unstable sets (See Subsection \ref{subsec:distribution_of_osceledec_subspaces}). Our main technical contribution is figuring out how to use this feature in order to show convergence to a Dirac measure in Section \ref{sec: dirac}.

A secondary technical point is made in this paper: we have formulated and proved our results by using irreducibility conditions stated for Grassmannians $\Gr(k,d)$ rather than conditions on exterior powers $\Lambda^k(\R^d)$. Although the difference is minor, we find that working with Grassmannians is conceptually cleaner and more in keeping with the approach pursued by Viana in his many works on Lyapunov exponents.

It remains interesting to determine if one can weaken the hypothesis that ``pinching" occurs for a fully supported measure with continuous product structure to the existence of a single pinching orbit.

\subsection{Structure of Paper} Section \ref{sec:defs} contains all necessary definitions and some standard constructions. In Section \ref{sec:smoothness}, we state our irreducibility hypothesis for fiber-bunched cocycles and show that it implies the random distribution of unstable subspaces as well as a corresponding statement about $u$-states. Finally, Sections \ref{sec: dirac} and \ref{sec: proof} contain the proof of Theorem \ref{theorem: maintheorem} and Section \ref{sec: furstenberg} contains the proof of Theorem \ref{thm:furstenberg_analog}. 

\vspace{.5cm}

\noindent\textbf{Acknowledgments.} The authors thank their advisor, Amie Wilkinson, for suggesting that they study Lyapunov exponents together, which lead to the results in this paper. The authors also thank Aaron Brown for helpful discussions. This material is based upon work supported by the National Science Foundation under Award No.~DMS-2202967.

\section{Background and Definitions}\label{sec:defs}

For $m\ge 2$, we let $M$ be an irreducible $m\times m$ matrix with coefficients in $\{0,1\}$ and let $\hsigma\colon \hSigma\to \hSigma$ be the associated two-sided left  shift. Below, we always assume that $\hSigma$ is of this form. One can formulate an abstract notion of a hyperbolic system that generalizes the dynamical properties of this system, see for example the setting described in \cite{kalinin2011livsic}, but we work with shifts here. The space $\hSigma$ is endowed with a metric given for $\hx\neq \hy$ by
\[
d(\hx,\hy)\coloneqq
2^{-N(\hx,\hy)}\text{ where } N(\hx,\hy)=\min\{\abs{n}:\hx_n\neq \hy_n\}.
\]
We will use the usual notation for cylinder sets where $[0;i]$ denotes the cylinder with zeroth symbol $i$. We write $\Sigma$ for the one sided left-shift associated to $\hat{\Sigma}$.

We define the local stable and unstable sets of a point $\hx\in \hSigma$:
\[\begin{aligned}
&W^s_{loc}(\hx)&=\{\hy\in \Sigma: \hx_i=\hy_i, \text{ for }i\ge 0\},\\
&W^u_{loc}(\hx)&=\{\hy\in \Sigma: \hx_i=\hy_i, \text{ for } i\le 0\}.
\end{aligned}
\]
We let $\pi_s$ and $\pi_u$ be the obvious projections onto the local stable and unstable sets. The global stable and unstable sets are defined similarly for $\hx\in \hSigma$:
\[\begin{aligned}
&W^s(\hx)=\{\hy\in \Sigma: \exists n \geq 0, \, \hx_i=\hy_i, \text{ for }i\ge n\},\\
&W^u(\hx)=\{\hy\in \Sigma: \exists n \leq 0,\hx_i=\hy_i, \text{ for } i\le n\}.
\end{aligned}
\]

For two points $\hx,\hy$ in the same cylinder $[0;i]$ we can take their Smale bracket $[\hx,\hy]$ which is the unique point in $W^u_{loc}(\hx)\cap W^s_{loc}(\hy)$. Symbolically this point is given by the sequence:
\[
(\ldots,\hx_{-2},\hx_{-1},\hx_0,\hy_1,\hy_2,\ldots).\]

We will also work with one sided shift spaces $\Sigma^s$ and $\Sigma^u$. These are defined by
\begin{align}
\Sigma^u=\{(x_n)_{n\ge 0}: q_{x_n,x_{n+1}}=1\text{ for all } n\ge 0\},\\
\Sigma^s=\{(x_n)_{n\le 0}: q_{x_n,x_{n+1}}=1\text{ for all } n<0\},
\end{align}
with the corresponding left and right shifts $\sigma_u$ and $\sigma_s$. We also have notation for cylinders in $\hat{\Sigma}$:
\begin{align}
[m;a_0,\ldots,a_k]=\{\omega\in \hat{\Sigma}: \omega_{m+i}=a_{m+i}, 0\le i\le k\}.
\end{align}
Analogously we define $[m;a_0,\ldots,a_k]^u$ and $[m;a_0,\ldots,a_k]^s$ as notation for cylinders in $\Sigma^u$ and $\Sigma^s$.  Since many arguments are symmetric for the left and right shift, whenever we need to consider only one of them at a time, say the left shift, in keeping with the literature we will also write $\Sigma^u$ as $\Sigma$ and write $P$ for the projection $P\colon \hat{\Sigma}\to \Sigma$. Thus symbols with hats are used for two-sided objects and symbols without hats for the corresponding one-sided objects on the left shift $\Sigma^u$.

We let $\hA\colon \hSigma\to \GL(d,\R)$ be an $\alpha$-H\"older continuous function, we think of $\hA$ as defining a cocycle $\hat{\mc{A}}\colon \hSigma\times \R^d\to \hSigma\times \R^d$ in the usual way and make the standard abuse of calling $\hat{A}$ a cocycle. We define for $n\in \N$,
\[
\hA^n(\hx)=\hA(\hsigma^{n-1}(\hx))\cdots \hA(\hx),
\]
and for $n\in \N$ also define $\hA^{-n}(\hx)=(\hA^n(\hsigma^{-n}\hx))^{-1}$ as is standard.

In this paper we study fiber bunched cocycles.
\begin{definition}
We say that an $\alpha$-H\"older cocycle $\hA\colon \hSigma\to \GL(d,\R)$ is \emph{fiber-bunched} if for every $\hx\in \hSigma$:
\[
\|\hA(\hx)\|\|\hA(\hx)^{-1}\|2^{-\alpha}<1.
\]
\end{definition}
Fiber bunched cocycles have a number of nice properties. The most important of which is that the cocycle has holonomies. See, for example, \cite[Sec.~4.2]{sadovskaya2015cohomology}.
\begin{proposition}\label{prop:holonomies_exist}
    Suppose that $\hA$ is a fiber-bunched cocycle over $\hsigma$. Then for any $\hx\in W^s_{loc}(\hy)$,
    \[
H^s_{\hx\hy}\coloneqq \lim_{n\to \infty} \hA^{-n}(\hy)\hA^n(\hx)
    \]
    exists and $H^s_{\hx\hy}$, called the stable holonomy defines a H\"older continuous function from $x,y$ to $\GL(d,\R)$. The local stable holonomy satisfies:
    \begin{enumerate}
\item
$H^s_{\hx\hx}=\id$ and $H^s_{\hy\hz}\circ H^s_{\hx\hz}=H^s_{\hx\hz}$ for any $\hy,\hz\in W^s_{loc}(\hx)$.
\item
$\hA(\hx)=H^s_{\hsigma(\hy) \hsigma(\hx)}\circ \hA(\hy)\circ H^s_{\hx\hy}$
    \end{enumerate}   
    The analogous statement holds for the local unstable sets and their holonomies.
\end{proposition}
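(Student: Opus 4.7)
The plan is to read the expression in the statement as $B_n\coloneqq \hA^n(\hy)^{-1}\hA^n(\hx)$, i.e.\ the natural matrix product sending the fiber over $\hx$ to the fiber over $\hy$, and to prove convergence via a telescoping estimate. Using the cocycle identity $\hA^{n+1}(\cdot)=\hA(\hsigma^n(\cdot))\hA^n(\cdot)$, one computes directly
\[
B_{n+1}-B_n = \hA^{n+1}(\hy)^{-1}\bigl[\hA(\hsigma^n(\hx))-\hA(\hsigma^n(\hy))\bigr]\hA^n(\hx).
\]
Since $\hy\in W^s_{loc}(\hx)$ implies $d(\hsigma^n(\hx),\hsigma^n(\hy))\le 2^{-n}$, the $\alpha$-H\"older hypothesis on $\hA$ yields
\[
\|B_{n+1}-B_n\|\le C\cdot 2^{-n\alpha}\cdot \|\hA^n(\hy)^{-1}\|\,\|\hA^n(\hx)\|,
\]
so it suffices to show that $\|\hA^n(\hx)\|\,\|\hA^n(\hy)^{-1}\|$ grows no faster than $\theta^n$ for some $\theta<2^{\alpha}$. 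This growth bound is the technical heart of the proposition and the step I expect to be the main obstacle.

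For this bound, I would first use compactness of $\hSigma$ and continuity of $\hA$ to upgrade the pointwise fiber-bunching inequality to a uniform estimate $\|\hA(\hz)\|\,\|\hA(\hz)^{-1}\|\le \kappa\cdot 2^{\alpha}$ with $\kappa<1$; submultiplicativity then gives $\prod_{i=0}^{n-1}\|\hA(\hsigma^i(\hx))\|\,\|\hA(\hsigma^i(\hx))^{-1}\|\le (\kappa\cdot 2^{\alpha})^n$. The difficulty is that this controls $\hA$ and $\hA^{-1}$ along the \emph{same} orbit, whereas what we need mixes $\hA$ on the orbit of $\hx$ with $\hA^{-1}$ on the orbit of $\hy$. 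The standard resolution is a bounded-distortion argument: replacing each $\|\hA(\hsigma^i(\hy))^{-1}\|$ by $\|\hA(\hsigma^i(\hx))^{-1}\|$ costs a multiplicative factor at most $1+C'\cdot 2^{-i\alpha}$, since $\hA^{-1}$ is $\alpha$-H\"older and bounded away from $0$ on $\hSigma$; because $\sum_i 2^{-i\alpha}<\infty$, the product $\prod_i(1+C'\cdot 2^{-i\alpha})$ converges, yielding $\|\hA^n(\hx)\|\,\|\hA^n(\hy)^{-1}\|\le C''(\kappa\cdot 2^{\alpha})^n$. Feeding this back gives $\|B_{n+1}-B_n\|\le C'''\kappa^n$, so $(B_n)$ is Cauchy and the limit $H^s_{\hx\hy}\in \GL(d,\R)$ exists.

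H\"older dependence of $H^s_{\hx\hy}$ on $(\hx,\hy)$ follows from the same telescoping estimate applied to two nearby pairs: since the partial products converge at the geometric rate $\kappa^n$, truncating at $n$ comparable to $-\log d(\hx,\hx')$ and exploiting H\"older continuity of $\hA$ on the initial segment produces a uniform H\"older modulus on compact pieces of $\{(\hx,\hy):\hy\in W^s_{loc}(\hx)\}$. The algebraic properties are then formal. For item (1), $H^s_{\hx\hx}=\lim \hA^n(\hx)^{-1}\hA^n(\hx)=\id$, and $H^s_{\hy\hz}\circ H^s_{\hx\hy}=H^s_{\hx\hz}$ follows by passing to the limit in the exact identity $\hA^n(\hz)^{-1}\hA^n(\hy)\cdot \hA^n(\hy)^{-1}\hA^n(\hx)=\hA^n(\hz)^{-1}\hA^n(\hx)$. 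For item (2), multiplying out
\[
H^s_{\hsigma(\hy)\hsigma(\hx)}\,\hA(\hy)\,H^s_{\hx\hy}=\lim_n \hA^n(\hsigma(\hx))^{-1}\,\hA^n(\hsigma(\hy))\,\hA(\hy)\,\hA^n(\hy)^{-1}\,\hA^n(\hx)
\]
and collapsing the middle three factors via $\hA^n(\hsigma(\hy))\hA(\hy)\hA^n(\hy)^{-1}=\hA^{n+1}(\hy)\hA^n(\hy)^{-1}=\hA(\hsigma^n(\hy))$ reduces the claim to $\lim_n \hA^n(\hsigma(\hx))^{-1}\hA(\hsigma^n(\hy))\hA^n(\hx)=\hA(\hx)$. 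The analogous expression with $\hy$ replaced by $\hx$ equals $\hA(\hx)$ \emph{exactly} (for every $n$), by the same identity $\hA^{n+1}(\hx)=\hA^n(\hsigma(\hx))\hA(\hx)$, and the difference is bounded by $\|\hA^n(\hsigma(\hx))^{-1}\|\cdot C\cdot 2^{-n\alpha}\cdot \|\hA^n(\hx)\|\to 0$ by the same growth estimate used for convergence of $B_n$, completing the proof.
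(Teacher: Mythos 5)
The paper does not include a proof of this proposition; it is stated as a standard fact with a citation to the literature. Your telescoping argument, combined with the bounded-distortion comparison between the orbits of $\hx$ and $\hy$ to convert the fiber-bunching product along one orbit into the mixed bound $\|\hA^n(\hx)\|\,\|\hA^n(\hy)^{-1}\|\lesssim(\kappa\cdot 2^{\alpha})^n$, is exactly the standard proof (as in Bonatti--Viana and Sadovskaya), and all the computations check out, including the collapse in the verification of item (2). Two small notes: the paper's displayed formula $\hA^{-n}(\hy)\hA^n(\hx)$ is a notational slip (with the paper's own convention $\hA^{-n}(\hy)=(\hA^n(\hsigma^{-n}\hy))^{-1}$ the composition does not type-check), and your reading $\hA^n(\hy)^{-1}\hA^n(\hx)$ is the intended one; likewise the paper's item (1) as printed, $H^s_{\hy\hz}\circ H^s_{\hx\hz}=H^s_{\hx\hz}$, has a typo and the cocycle identity you prove, $H^s_{\hy\hz}\circ H^s_{\hx\hy}=H^s_{\hx\hz}$, is what is meant.
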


We will also make use of a \emph{global stable holonomy}. For a point $\hx\in W^s(\hy)$ this is defined as follows. Iterate the cocycle until $\hsigma^n(\hx)\in W^s_{loc}(\hsigma^n(\hy))$, then using this $n$, set
\begin{equation}\label{eqn:global_holonomy_def}
H^s_{\hx\hy}=\hA^{-n}(\hy)H_{\hsigma^n(\hx)\hsigma^n(\hy)}^s\hA^n(\hx),
\end{equation}
where the holonomy on the right hand side is given by Proposition \ref{prop:holonomies_exist}.

 Using holonomies, via a construction which we recall now from \cite{avila2007simplicity}, we may assume that the cocycle is constant along local stable sets, so that $\hA$ induces a cocycle over $\Sigma^u = \Sigma$. Fix some arbitrary point $x^- \in \Sigma^s$, the left-sided shift, and for each $\hat{x} \in \hSigma$ let $\phi^u(\hx)$ be $W^u_{loc}(x^-) \cap W^s_{loc}(\hx)$. Then $A(\hx)$, the \textit{reduced cocycle}, is defined by
\begin{equation}\label{eqn:def_reduced_cocycle}
 A(\hx) = H^s_{\hsigma \hx, \phi^u(\hsigma \hx)} \circ \hat{A}(\hx) \circ	H^s_{\phi^u(\hx) \hx } = H^s_{\hsigma(\phi^u(\hx)) \phi^u(\hsigma(\hx))} \circ \hat{A}(\phi^u(\hx))
 \end{equation}
and $\hat{x} \to A(\hat{x})$ is easily seen to be constant along local stable manifolds due to (2) in Proposition \ref{prop:holonomies_exist}.

\subsection{Lyapunov Exponents} \label{ssec: lyapunov exponents} For $\hmu$ any shift-invariant measure on $\hSigma$ and $\hA: \hSigma \to \GL(d,\R)$ any measurable cocycle such that $\int_{\hSigma} \log^+ \|\hA^{\pm 1}\|\, d\hmu < \infty$, where $\log^+(x)=\max(0,x)$, the Oseledec' theorem guarantees the existence of numbers 
$$\lambda_1(\hmu) \geq \lambda_2(\hmu) \geq \dots \geq \lambda_d(\hmu)$$
and an $\hA$-invariant measurable splitting $\R^d = E_{d_1}(\hx) \oplus \dots \oplus E_{d_k}(\hx)$ defined $\hmu$-a.e.~($\dim E_{d_i} = d_i - d_{i-1}$ for $i = 2, \dots k$ and $\dim E_1 = d_1$), where the numbers $d_i$ correspond to the indices such that $\lambda_{d_i}(\hmu) > \lambda_{d_i+1}(\hmu)$, and such that for $v \in E_{d_i}(\hx)$:
\[
\lim_{n \to \pm \infty} \frac{1}{n} \log \|\hA^{n}(\hx) v\| = \lambda_{d_i}(\hmu).
\]
Moreover, for $\hmu$-a.e.~$\hx$ as above, we define the unstable flag 
\[
\{0\} \subset E^u_{d_1}(\hx) \subset E^u_{d_2}(\hx) \subset \dots \subset E^u_{d_k}(\hx) = \R^d
\]
by $E^u_{d_i}(\hx) = E_{d_1}(\hx) \oplus \dots \oplus E_{d_i}(\hx)$ and the stable flag $E^s$ is defined analogously. 
From the proof of the one-sided Oseledec' theorem, the unstable flag depends only on the past of $\hx$, i.e., on $\hA^{-n}(\hx)$ for $n \geq 0$ and similarly the stable flag which only depends the future of $\hat{x}$, i.e.~on $\hA^{n}(\hx)$ for $n \geq 0$. For a point $\hat{x}$ that has an Osceledec splitting as defined above, we will write $E^u_k(\hat{x})$ for the $k$-dimensional asymptotically most expanded subspace at $\hat{x}$ when this is well defined. In the case of simple spectrum, this is well defined as long as $\hat{x}$ is Lyapunov regular because $E^u_k(\hat{x})$ is then the direct sum of the top $k$ subspaces $E_1(\hat{x}),\ldots ,E_k(\hat{x})$. Similarly we define $E^s_{k}(\hat{x})$ to be the $k$-dimensional subspace that is most contracted asymptotically at $\hat{x}$.

To make this statement precise we introduce the following notation. For a matrix $A \in \GL(d,\R)$ order its singular values non-increasingly and write them $\sigma_1(A) \ge \sigma_2(A) \ge \cdots \ge \sigma_d(A)$. If $\sigma_k(A) > \sigma_{k+1}(A)$, then from the singular value decomposition there exists a unique $(d-k)$-dimensional subspace least expanded by $A$, which we denote by $S_{d-k}(A)$. Similarly, there exists a unique $k$-dimensional subspace most expanded by $A$, and we denote its \textit{image under} $A$ by $U_k(A)$. For $\hx \in \hSigma$, let $U_k(\hx, n) := U_k(\hA^n(\hsigma^{-n}(\hx)))$ and $S_{d-k}(\hx, n) := S_{d-k}(\hA^n(\hx))$ when $\sigma_k(A^n(\hx)) > \sigma_{k+1}(A^n(\hx))$. Observe that if $\lambda_k(\hmu) > \lambda_{k+1}(\hmu)$, then for $\hmu$-a.e.\ $\hx$ there is some $n_0 > 0$ such that for all $n \geq n_0$ the subspaces $U_k(\hx, n)$ and $S_{d-k}(\hx, n)$ are well defined. Then by the usual proof of Oseledec' theorem, for $\hmu$-a.e.\ $\hx$,
\begin{equation}
    \label{eq: convergenceofsubspaces} \lim_{n \to \infty} U_k(\hx, n) = E^u_k(\hx), \hspace{1cm}\lim_{n \to \infty}  S_{d-k}(\hx, n) = E^s_{d-k}(\hx).
\end{equation}

In the fiber-bunched setting, once we have passed to the reduced cocycle the unstable flag depends only on the past of $\hx$.
\begin{lemma}\label{lem:unstable_bundles_cts_along_unstable_leaves}
Let $1 \leq k \leq d$. Suppose $E^u_k(\hx)$ exists for some $\hx \in \hSigma$ in the sense that the limits in \eqref{eq: convergenceofsubspaces} exist. Then for every $\hy \in W^u_{loc}(\hx)$, $E^u_k(\hy)$ exists and is given by:
$$E^u_k(\hy) = H^u_{\hx \hy} E^u_k(\hx).$$
In particular $E^u_k$ is uniformly continuous along local unstable manifolds where it is defined. The analogous statement holds for $E^s_k$ with respect to $H^s$ holonomies.
\end{lemma}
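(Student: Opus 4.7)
The plan is to use the cocycle-holonomy equivariance from Proposition \ref{prop:holonomies_exist}(2). Iterating the unstable analog $n$ times and then replacing $\hx,\hy$ by $\hsigma^{-n}\hx,\hsigma^{-n}\hy$ yields the key identity
$$\hA^n(\hsigma^{-n}\hy) = H^u_{\hx\hy} \circ \hA^n(\hsigma^{-n}\hx) \circ H^u_{\hsigma^{-n}\hy,\,\hsigma^{-n}\hx}.$$
Set $H := H^u_{\hx\hy}$ and $R_n := H^u_{\hsigma^{-n}\hy,\,\hsigma^{-n}\hx}$. Since $\hy \in W^u_{loc}(\hx)$, backward iteration contracts $d(\hsigma^{-n}\hx, \hsigma^{-n}\hy)$ exponentially, so H\"older continuity of $H^u$ together with $H^u_{\hz\hz} = \id$ gives $R_n \to \id$ at an exponential rate.

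Next I would exploit the hypothesis: the existence of the limits in (\ref{eq: convergenceofsubspaces}) at $\hx$ makes $\hx$ a Lyapunov regular point with $\lambda_k > \lambda_{k+1}$, so the $k$-th singular value gap $\sigma_k(\hA^n(\hsigma^{-n}\hx))/\sigma_{k+1}(\hA^n(\hsigma^{-n}\hx))$ diverges exponentially. A standard consequence of the SVD is that any matrix $M$ with a large $k$-th singular value gap acts as a strong contraction on $\Gr(k,d)$: for every $k$-plane $W$ at definite angle to $S_{d-k}(M)$ one has $d_{\Gr}(MW, U_k(M)) \lesssim \sigma_{k+1}(M)/\sigma_k(M)$. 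Fix any $W \in \Gr(k,d)$ transverse to $E^s_{d-k}(\hx)$; by the second limit in (\ref{eq: convergenceofsubspaces}), $W$ is transverse to $S_{d-k}(\hx,n)$ for all large $n$, hence $\hA^n(\hsigma^{-n}\hx)W \to E^u_k(\hx)$. Since $H$ and $R_n$ are uniformly bounded in norm with $R_n \to \id$, the matrices $\hA^n(\hsigma^{-n}\hy) = H\hA^n(\hsigma^{-n}\hx)R_n$ inherit comparable singular values and hence a divergent $k$-th gap, so by the same contraction $\hA^n(\hsigma^{-n}\hy)W$ converges to $\lim_n U_k(\hy,n)$. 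Tracking the identity,
$$\hA^n(\hsigma^{-n}\hy)W = H\,\hA^n(\hsigma^{-n}\hx)(R_n W) \to H E^u_k(\hx) = H^u_{\hx\hy}E^u_k(\hx),$$
where $R_n W \to W$ and the Grassmannian contraction of $\hA^n(\hsigma^{-n}\hx)$ absorbs the perturbation. Thus $U_k(\hy,n) \to H^u_{\hx\hy}E^u_k(\hx)$; a symmetric argument handles $S_{d-k}(\hy,n)$, so $E^u_k(\hy)$ exists in the sense of (\ref{eq: convergenceofsubspaces}).

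The uniform continuity conclusion then follows immediately from H\"older continuity of the unstable holonomy $(\hx,\hy) \mapsto H^u_{\hx\hy}$ on local unstable sets, a standard consequence of fiber-bunching. The main obstacle is the Grassmannian contraction estimate in the second paragraph; it is routine but requires writing $M$ in SVD coordinates and tracking the angle of $W$ with respect to $S_{d-k}(M)$ carefully so that both the perturbation $R_n W - W$ and the Grassmannian error vanish simultaneously in the limit.
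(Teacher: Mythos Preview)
Your overall strategy matches the paper's: the intertwining identity $\hA^n(\hsigma^{-n}\hy) = H\,\hA^n(\hsigma^{-n}\hx)\,R_n$ with $R_n \to \id$, combined with singular-value gap growth, is exactly the right setup. However, the second paragraph contains a genuine error. You fix $W$ transverse to $E^s_{d-k}(\hx)$ and invoke the second limit in \eqref{eq: convergenceofsubspaces} to obtain $W$ transverse to $S_{d-k}(\hx,n)$, then conclude $\hA^n(\hsigma^{-n}\hx)W \to E^u_k(\hx)$. But $S_{d-k}(\hx,n) = S_{d-k}(\hA^n(\hx))$ is the contracting subspace of the \emph{forward} product $\hA^n(\hx)$, whereas the Grassmannian contraction you are invoking is governed by $S_{d-k}(\hA^n(\hsigma^{-n}\hx))$, the contracting subspace of the \emph{backward} product. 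The hypothesis says nothing about the latter; there is no reason it should stabilize or stay uniformly away from your fixed $W$. The same confusion underlies the remark that ``a symmetric argument handles $S_{d-k}(\hy,n)$'': that quantity involves forward iteration along an unstable leaf, where unstable holonomy offers no control (and this second limit is not actually part of what the lemma asserts about $\hy$).

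The repair is to drop the auxiliary $W$ entirely and compare singular subspaces directly, which is what the paper does (working in the stable case, with $S_k$ in place of $U_k$). The relevant perturbation fact is: if $\sigma_k(M_n)/\sigma_{k+1}(M_n)\to\infty$, $H$ is fixed invertible, and $R_n\to\id$, then $d_{\Gr}\bigl(U_k(HM_nR_n),\,H\,U_k(M_n)\bigr)\to 0$. Applying this with $M_n=\hA^n(\hsigma^{-n}\hx)$ gives $U_k(\hy,n)\to H^u_{\hx\hy}E^u_k(\hx)$ immediately, without ever needing to know where $S_{d-k}(M_n)$ sits.
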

\begin{proof} We write the proof in the case of the stable bundle to avoid negative signs. The point is that for $\hat{x}\in W^s_{loc}(\hat{y})$, we have that
\begin{equation}\label{eqn:intertwine_honomomies_B}
H^s_{\hsigma^n(\hat{x})\hsigma^n(\hat{y})}A^n(\hat{x})=A^n(\hat{y})H_{\hat{x}\hat{y}}^s.
\end{equation}
and $H^s_{\sigma^n(\hat{x})\sigma^n(\hat{y})}\to \text{Id}$ while the gaps between the singular values of $A^n(\hat{x})$ are growing arbitrarily large. Writing $S_k(A)$ for the span of the smallest $k$ singular vectors of a matrix $A$. From \eqref{eqn:intertwine_honomomies_B} we have:
\[
S_k(H^s_{\hat{\sigma}^n(\hat{x})\hat{\sigma}^n(\hat{y})}A^n(\hat{x}))=S_k(A^n(\hat{y})H^s_{\hat{x}\hat{y}}).
\]
By directly estimating the norm restricted to subspaces by using the angles between singular subspaces of $A(\hat{x})$, one can deduce that $S_k(H^s_{\sigma^n(\hat{x})\sigma^n(\hat{y})}A^n(\hat{x}))$ converges to $ E^s_k(\hat{x})$ and   $S_k(A^n(\hy)H^s_{\hat{x}\hat{y}})$ converges to $H^s_{\hat{y}\hat{x}}(E^s_k(\hat{y}))$. 
Thus by passing to the limit we find that 
\[
E^s_k(\hat{x})=H^s_{\hat{y}\hat{x}}(E^s_k(\hat{y})),
\]
as desired.
\end{proof}



\subsection{Product Structure of Measures}
We now describe continuous local product structure. For each $1\le i\le m$, there exists the projection $[0;i]\to \pi_s([0;i])\times \pi_u([0;i])$. For convenience, we will denote by $\hmu^s_i,\hmu^u_i$ these pushforwards of $\mu\vert_{[0;i]}$ by $\pi_s$ and $\pi_u$ on the one sided shifts. We say that $\hat{\mu}$ has \emph{continuous local product structure} if for each symbol $i$ there is a continuous function $\psi\colon \hSigma\to (0,\infty)$ such that 
\begin{equation}\label{eqn:definition_of_product_structure}
\hmu\vert_{[0;i]} =\psi\cdot (\hmu^s_{i}\times \hmu^u_{i}).
\end{equation}

 Equation \eqref{eqn:definition_of_product_structure} is a description of the conditional measures on local stable and unstable leaves:
\begin{equation}\label{eqn:integral_against_phi_psi}
\int_{[0;i]} \phi\,d\hmu=\int_{[0;i]^u} \left(\int_{[0;i]^s} \phi \psi \,d\hmu^s_i \right)d\hmu^u_i.
\end{equation}

Note that local unstable holonomy between local stable sets has a bounded Jacobian for these conditional measures, where the bound is given by $0<\max \psi/\min\psi<\infty$. In particular, from Equation \eqref{eqn:integral_against_phi_psi} we have the following description of the conditional measures along local stable sets. On the local stable set containing the point $\hx \in \hat{\Sigma}$, we may take
\[
\hmu^s_{\hx}=\psi(\hy)\hmu^s_i\text{ for } \hy\in W^s_{loc}(\hx).
\]

We now consider how the stable measures transform under application of the dynamics. Our goal is to show that we may arrange that the conditional measures along unstable leaves to not become distorted at small scale as we iterate the dynamics forward. The precise statement is in Lemma \ref{lem:measure_distortion_local_stable}.

The essence of the following proposition is that the conditional measures of $\hat{\mu}$ along stable and unstable leaves have a continuous Jacobian with respect to pushforward.
\begin{proposition}\label{prop:ac_of_pushforward}
\cite[Prop.~3.1]{butler2018measurable} There are continuous functions $J_u\colon \Sigma^u\to (0,\infty)$ and $J_s\colon \Sigma^s\to (0,\infty)$ such that for each $1\le i,j\le l$ and each Borel subset $K_u\subseteq [0;i]^u$,
\[
\hmu^u_i(K_u)=\int_{\hsigma_u^{-1}(K_u)\cap [0;j,i]^u} J_u\,d\hmu^u_j,
\]
and likewise for each Borel subset $K_s\subseteq [0;i]^s$.
\end{proposition}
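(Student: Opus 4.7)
The plan is to derive the identity from the shift-invariance of $\hmu$ together with its local product structure, reading off $J_u$ as a continuous density built from the product-structure function $\psi$. I would carry out the argument in three steps.

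First, I would combine $\hsigma_*\hmu=\hmu$ with the intertwining $\pi_u\circ\hsigma=\sigma_u\circ\pi_u$ and the partition of preimages $\hsigma^{-1}([0;i])=\bigsqcup_j [0;j,i]$ to express $\hmu^u_i$ on $[0;i]^u$ branch-by-branch: on each $j$-branch, the pushforward of $\hmu\vert_{[0;j,i]}$ under $\sigma_u\circ\pi_u$ accounts for the contribution to $\hmu^u_i$ coming from the preimage sitting in $[0;j,i]^u$. Equivalently, pairing against a continuous test function $f$ supported in $[0;i]^u$ and using invariance converts $\int f\, d\hmu^u_i$ into a sum over admissible $j$ of integrals of $f\circ \sigma_u$ with respect to the pushforward by $\pi_u$ of $\hmu\vert_{[0;j,i]}$.

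Second, I would apply the product decomposition $\hmu\vert_{[0;j]}=\psi\cdot(\hmu^s_j\times\hmu^u_j)$. Since $[0;j,i]=P^{-1}([0;j,i]^u)$ and the relevant integrand depends only on the unstable coordinate, the stable-fiber integral factors out. The density of the pushforward with respect to $\hmu^u_j\vert_{[0;j,i]^u}$ then appears as a partial integral of $\psi$ over $[0;j]^s$, evaluated at the unstable coordinate; this is my candidate for $J_u$ on $[0;j,i]^u$. Continuity of $\psi$ on the compact cylinder $[0;j]$, positivity of $\psi$, and dominated convergence together give that $J_u$ is continuous and strictly positive. A symmetric construction using $\sigma_s$ in place of $\sigma_u$ produces $J_s$.

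The main obstacle is to glue the branch-wise densities into a single globally defined continuous function on $\Sigma^u$: the formula depends a priori on the choice of starting cylinder, and one must check that refining the cylinder (pushing deeper into the past) yields a consistent value. This should follow from the uniqueness of Radon--Nikodym derivatives, from shift-invariance applied at deeper cylinders, and from the fact that any two descriptions of the conditional measures along local stable leaves are related by the continuous function $\psi$.
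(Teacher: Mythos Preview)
The paper does not supply a proof of this proposition; it is quoted from \cite[Prop.~3.1]{butler2018measurable}. I therefore assess your sketch on its own terms.

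There is a real gap. Your first step---invariance plus the branch decomposition $\hsigma^{-1}([0;i])=\bigsqcup_j[0;j,i]$---yields the \emph{summed} identity
\[
\hmu^u_i(K_u)=\sum_{j}\int_{\sigma_u^{-1}(K_u)\cap[0;j,i]^u}\Psi_j\,d\hmu^u_j,
\qquad
\Psi_j(x^u):=\int_{[0;j]^s}\psi(x^s,x^u)\,d\hmu^s_j(x^s),
\]
and your candidate density is $\Psi_j$. But the proposition asserts the relation for \emph{each} admissible $j$ separately, not summed over $j$. These are genuinely different statements: on the $(\tfrac12,\tfrac12)$ Bernoulli $2$-shift one computes $\Psi_j\equiv1$, whereas the proposition forces $J_u\equiv2$.

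The missing ingredient is a second application of the product structure, this time on the \emph{target} cylinder $[0;i]$. Using that $\hsigma$ carries the rectangle $[0;j,i]\subset[0;j]$ bijectively onto $[-1;j,i]\subset[0;i]$, invariance of $\hmu$ applied to this single branch (no sum) together with the two product formulas gives
\[
(\sigma_u)_*\bigl(\Psi_j\,d\hmu^u_j\vert_{[0;j,i]^u}\bigr)
=\Phi_{ji}\,d\hmu^u_i,
\qquad
\Phi_{ji}(y^u):=\int_{[-1;j,i]^s}\psi(y^s,y^u)\,d\hmu^s_i(y^s),
\]
so that $J_u=\Psi_j/(\Phi_{ji}\circ\sigma_u)$ on $[0;j,i]^u$. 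Both factors are continuous and strictly positive by the hypotheses on $\psi$, and since $j,i$ are determined by the first two symbols of $x^u$ the branch formulas patch to a single continuous $J_u$ on $\Sigma^u$ with no further consistency check. In particular the ``gluing obstacle'' you flagged in your third step disappears once the density is written as this ratio.
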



In the following lemma, we consider the preimage of a local stable set $W^s_{loc}(\hx)$. This preimage may intersect several of the cylinders $[0;i]$. For convenience, we write $\hat{\mu}^s_{\hx,j}$ for the conditional measure on $\hsigma^{-1}(W^s_{loc}(\hx)\cap [0;j])$.

\begin{lemma}\label{lem:pushforward_measure_formula}
Suppose that $\hat{\Sigma}$ is a subshift of finite type as described above defined by a matrix $M$.
We have the following formula for the invariance of the measure under the shift. There exists a positive continuous function $J^u\colon \hat{\Sigma}\to \R^+$ that is constant on local stable leaves such that we have the following product formula for the measure. For each $i$,
\[
\hat{\mu}\vert_{[0;i]}=\int_{\hat{x}\in W^u_{loc}(\hy)} \sum_{j\to i} \hsigma_*(\hat{\mu}^s_{\hat{x},j})J^u(\hat{x})\,d\hat{\mu}^u_i,
\]
where the subscript in the sum indicates we are summing over all symbols $j$ for which $ji$ is subword permitted by $M$.
    \end{lemma}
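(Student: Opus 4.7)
The plan is to derive the formula directly from two ingredients: the $\hat{\sigma}$-invariance of $\hat{\mu}$ and the local product structure. Invariance gives $\hat{\mu}\vert_{[0;i]} = \hat{\sigma}_*(\hat{\mu}\vert_{\hat{\sigma}^{-1}([0;i])})$, and since
\[
\hat{\sigma}^{-1}([0;i]) = \bigsqcup_{j \to i} [0;j,i]
\]
(with $j\to i$ meaning $ji$ is an allowed subword), this decomposes into a sum over preimage branches
\[
\hat{\mu}\vert_{[0;i]} = \sum_{j\to i} \hat{\sigma}_*\bigl(\hat{\mu}\vert_{[0;j,i]}\bigr).
\]
Each summand will then be disintegrated along local stable leaves and the transverse (unstable) variable re-parameterized from $[0;j,i]^u$ to $[0;i]^u$ via the Jacobian $J_u$ supplied by Proposition \ref{prop:ac_of_pushforward}.

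For a fixed $j\to i$, the plan is to use \eqref{eqn:definition_of_product_structure} on $[0;j]$ to disintegrate $\hat{\mu}\vert_{[0;j,i]}$ as an integral over local unstable coordinates $\hat{z}\in[0;j,i]^u$ of the conditional stable measures on each leaf $W^s_{loc}(\hat{z})\cap[0;j]$, integrated against $\hat{\mu}^u_j\vert_{[0;j,i]^u}$. The continuous density $\psi$ from \eqref{eqn:definition_of_product_structure} is absorbed into the definition of these conditional measures. Next, I would apply $\hat{\sigma}_*$: since $\hat{\sigma}$ restricted to any stable leaf in $[0;j]$ is a bijection onto a stable leaf in $[0;i]$, and since $W^s_{loc}(\hat{z})\cap[0;j]$ coincides with $\hat{\sigma}^{-1}(W^s_{loc}(\hat{\sigma}\hat{z}))\cap[0;j]$, the pushforward of the stable conditional on the source is precisely $\hat{\sigma}_*(\hat{\mu}^s_{\hat{x},j})$ appearing in the statement, upon identifying $\hat{x} = \hat{\sigma}(\hat{z}) \in [0;i]$.

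The last step is to change variables in the outer integral from $\hat{z}$ (with reference measure $\hat{\mu}^u_j$ on $[0;j,i]^u$) to $\hat{x}=\hat{\sigma}_u(\hat{z})$ (with reference measure $\hat{\mu}^u_i$ on $[0;i]^u$), using that $\hat{\sigma}_u\colon[0;j,i]^u\to[0;i]^u$ is a bijection and that Proposition \ref{prop:ac_of_pushforward} provides the continuous Radon–Nikodym derivative $J_u$. Defining $J^u(\hat{x})$ on each $[0;i]$ to record the resulting density on the preimage branch $j=\hat{x}_{-1}$ (combined with any residual contribution from $\psi$) then assembles the stated formula, with the sum over $j\to i$ simply gathering the contributions of all preimage branches.

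The main technical point will be verifying the two claimed properties of $J^u$: continuity and constancy on local stable leaves. Continuity is immediate from continuity of $J_u$ and $\psi$. For constancy on local stable leaves, the key observation is that $J_u$ is a function on $\Sigma^u$ and hence depends only on the one-sided coordinates that label the transverse cross-section; it is automatically constant on local stable leaves of $\hat{\Sigma}$. The possible variation of $\psi$ along local stable leaves is not a problem because it is absorbed into the conditional stable measure $\hat{\mu}^s_{\hat{x},j}$ rather than into $J^u$. The main bookkeeping obstacle is precisely to separate cleanly the contributions that must be absorbed into the conditional measures (the $\psi$ factor and any $\hat{x}$-dependence supported on stable leaves) from the transverse Jacobian factor that becomes $J^u$, so that the final density depends only on the unstable coordinate of $\hat{x}$.
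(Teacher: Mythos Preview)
Your proposal is correct and follows essentially the same route as the paper's proof: decompose $\hat{\sigma}^{-1}([0;i])$ into the cylinders $[0;j,i]$, disintegrate each $\hat{\mu}\vert_{[0;j,i]}$ along stable leaves via the product structure, push forward by $\hat{\sigma}$, and change variables in the transverse (unstable) integral using the Jacobian from Proposition~\ref{prop:ac_of_pushforward}. Your explicit discussion of why $J^u$ is constant on local stable leaves (because $J_u$ is a function on $\Sigma^u$, while the $\psi$-factor is absorbed into the stable conditionals) is a point the paper leaves implicit.
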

    \begin{proof}
To begin, we have from Proposition \ref{prop:ac_of_pushforward} that for each $\hy\in [0;j,i]$ exists $J^u_{ji}$ such that 
\begin{equation}\label{eqn:jacobian_for_mu_u}
\hsigma_*(\hat{\mu}^u_{\hy\vert_{[0;j,i]}})=J^u_{ji}d\hat{\mu}^u_{\hsigma(\hy)}.
\end{equation}
We now obtain a formula for the pushforward of $\hat{\mu}\vert_{[0;j,i]}$. This measure may be written as
\[
\int_{\hx\in W^u_{loc}(\hy)\cap \hat{\sigma}^{-1}[0;i]} \hat{\mu}^s_{\hx}\,d\hat{\mu}^u_{\hy}\vert_{W^u_{loc}(\hy)\cap [0;j,i]}.
\]
By pushing this forward by $\hat{\sigma}$, we obtain an expression for the measure on $[-1;j,i]\subseteq [0;i]$:
\[
\hat{\mu}\vert_{[-1;j,i]}=\int_{\hx\in W^u_{loc}(\hat{\sigma}(\hy))} \hat{\sigma}_*(\hat{\mu}^s_{\hat{\sigma}^{-1}(\hx)})\,d\hat{\sigma}_*(\hat{\mu}^u_{\hy}\vert_{W^u_{loc}(\hy)\cap [0;j,i]}).
\]
But from equation \eqref{eqn:jacobian_for_mu_u}, this is the same as 
\[
\int_{\hx\in W^u_{loc}(\hy))} \hat{\sigma}_*(\hat{\mu}^s_{\hat{\sigma}^{-1}(\hx)})J^u_{ji}(\hx)\,d\hat{\mu}^u_{\hat{\sigma}(\hy)}.
\]
Thus summing over all symbols $j$ that may precede $i$, we obtain the expression promised by the lemma.    
    \end{proof}

The following fact is then straightforward to derive.

\begin{lemma}\label{lem:measure_distortion_local_stable}
Suppose that $\hat{\Sigma}$ is a shift and that $\mu$ is a $\hat{\sigma}$-invariant probability measure on $\hat{\Sigma}$ with continuous product structure. Then the conditional measures satisfy the following property: 
If $\hx\in \hat{\Sigma}$ and $K$ a subset of $W^s_{loc}(\hx)$ then for all $n\ge 0$ we have that 
\[
\frac{\hmu^s_{\hsigma^n(\hx)}(\hsigma^n(K))}{\hmu^s_{\hsigma^n(\hx)}(\hsigma^n(W^s_{loc}(\hx)))}=\frac{\hmu^s_{\hx}(K)}{\hmu^s_{\hx}(W^s_{loc}(\hx))}.
\]
\end{lemma}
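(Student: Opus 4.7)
The plan is to prove this by induction on $n$, with the base case $n=1$ read off directly from Lemma \ref{lem:pushforward_measure_formula}. The underlying observation is that since $\hsigma$ is a bijection on $\hat{\Sigma}$ that maps $W^s_{loc}(\hx)$ onto $W^s_{loc}(\hsigma(\hx))\cap [-1;\hx_0]$, the pushforward measure $\hsigma_*(\hmu^s_{\hx})$ should agree with the restriction of $\hmu^s_{\hsigma(\hx)}$ to this image, up to a single multiplicative scalar.

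To make this precise I would fix $\hx$ with $\hx_0=j$, $\hx_1=i$, set $\hz=\hsigma(\hx)$, and disintegrate the formula of Lemma \ref{lem:pushforward_measure_formula} along the local stable leaves in the cylinder $[0;i]$. The result is that $\hmu^s_{\hz}$ decomposes as a sum over admissible predecessors $j'\to i$ of $J^u$-weighted pushforwards $\hsigma_*(\hmu^s_{\hx^{j'}})$, where $\hx^{j'}$ is the preimage of $\hz$ in $[0;j']$ and each summand is supported on the subcylinder $W^s_{loc}(\hz)\cap [-1;j']$. Restricting to the piece $[-1;j]$, where $\hx^j=\hx$, one obtains
\[
\hmu^s_{\hz}\big|_{\hsigma(W^s_{loc}(\hx))} \;=\; J^u(\hx)\,\hsigma_*(\hmu^s_{\hx}).
\]
The crucial feature — guaranteed by the last clause of Lemma \ref{lem:pushforward_measure_formula} — is that $J^u(\hx)$ is a single scalar rather than a function varying over the leaf, since $J^u$ is constant along local stable leaves.

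Via injectivity of $\hsigma$ on $W^s_{loc}(\hx)$, this translates into $\hmu^s_{\hsigma(\hx)}(\hsigma(K)) = J^u(\hx)\,\hmu^s_{\hx}(K)$ for every measurable $K\subseteq W^s_{loc}(\hx)$. Iterating $n$ times yields
\[
\hmu^s_{\hsigma^n(\hx)}(\hsigma^n(K)) \;=\; c_n(\hx)\,\hmu^s_{\hx}(K), \qquad c_n(\hx)=\prod_{k=0}^{n-1}J^u(\hsigma^k(\hx)),
\]
with $c_n(\hx)$ \emph{independent of $K$}. Applying the identity both to $K$ and to $K=W^s_{loc}(\hx)$ and taking the quotient cancels $c_n(\hx)$ and produces the claimed equality. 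The only step that demands some bookkeeping, but that I do not expect to be a real obstacle, is checking at each iteration that $\hsigma^k(W^s_{loc}(\hx))$ is exactly the subcylinder of $W^s_{loc}(\hsigma^k(\hx))$ cut out by the coordinates $(\hx_0,\ldots,\hx_{k-1})$, so that the base-case formula can be applied cleanly to the image subset at each step.
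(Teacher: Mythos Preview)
Your proposal is correct and takes essentially the same approach as the paper: both use Lemma~\ref{lem:pushforward_measure_formula} together with the constancy of $J^u$ along local stable leaves to see that $\hmu^s_{\hsigma(\hx)}$ restricted to $\hsigma(W^s_{loc}(\hx))$ is a scalar multiple of $\hsigma_*(\hmu^s_{\hx})$, and then iterate. The paper phrases this by decomposing $\hmu^s$ at a point as a sum of pushforwards from preimage leaves and then saying ``by iterating on this formula we obtain the lemma,'' whereas you spell out the iteration more explicitly with the product $c_n(\hx)$, but the content is identical.
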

\begin{proof}
Consider $\hsigma^{-1}(W^s_{loc}(\hx))$. Then there are points $\hy_1,\ldots,\hy_k$ such that 
\[
\hsigma^{-1}(W^s_{loc}(\hx))=\bigsqcup_{1\le i\le k} W^s_{loc}(\hy_i).
\]
From Lemma \ref{lem:pushforward_measure_formula}, because $J^u$ is constant along local stable leaves, it follows that there exist positive numbers $\xi_i$, $1\le i\le k$ such that
\[
\hmu^s_{\hx}=\sum_{1\le i\le k} \xi_i\hsigma_*(\hmu^s_{\hy_i}).
\]
By iterating on this formula we obtain the lemma.
\end{proof}

\subsection{Grassmannians, Arrangements, and Quasi-Projective Maps} Here we introduce the definitions related to Grassmannian cocycles necessary to study Lyapunov exponents with multiplicity 1. Throughout $\Lambda^p (\R^d)$ denotes the space of alternating $p$-forms over $\R^d$ and $\Gr(p,d)$ the Grassmannian of $p$-dimensional subspaces, which includes into $\P \Lambda^p(\R^d)$ as the decomposable $p$-forms, i.e., the $p$-forms $\eta$ which may be written as $\eta = v_1 \wedge \dots \wedge v_p$.  In what follows we endow $\Gr(k,d)$ with the metric:
$$d_{\Gr(k,d)}(V,W) = \sup_{v \in V, ||v|| = 1} \inf \{\|v-w\|: w \in W\}.$$

Any $\omega\in \Lambda^{d-p}(\R^d)$ defines a \emph{hyperplane} $H_{\omega} \subseteq \Lambda^p(\R^d)$ which is defined to be the kernel of the map $\eta\mapsto \eta\wedge \omega$. When $\omega$ is defined by a decomposable $(d-p)$-form, we think of $\omega$ as being ``geometric" because it defines a $p$-dimensional subspace of $\R^d$. Hence we may also define \emph{geometric} hyperplanes in $\Lambda^p(\R^d)$. For a $d-p$-form $\omega$, the intersection of $H_{\omega}$ with $\Gr(p,d)$ is called a \emph{hyperplane section} of $\Gr(p,d)$. Hence when a hyperplane section of $\Gr(p,d)$ is defined by a decomposable $(d-p)$-form $\omega$, we call it a \emph{geometric} hyperplane section. We call a finite intersection of hyperplane sections a \emph{linear section} of $\Gr(p,d)$. In the case that a linear section of $\Gr(p,d)$ is defined by geometric hyperplane sections, we call it a \emph{geometric} linear section. Finally, a finite union of linear sections is called a \emph{linear arrangement}, and when we take a finite union of geometric linear sections, we call this a \emph{geometric} linear arrangement. Similarly, one may define a \emph{geometric subspace} of $\Lambda^p(\R^d)$ as a finite intersection of geometric hyperplanes in $\Lambda^p(\R^d)$.


Observe that in $\Gr(1,d)$ linear sections are just subspaces of $\R^d$. Linear arrangements play an analogous role in defining irreducibility conditions as the classic strong irreducibility condition does in Furstenberg's theorem on positivity of the top Lyapunov exponent.

The following lemma shows that linear arrangements indeed carry some properties of finite unions of linear subspaces to higher Grassmannians:

\begin{lemma} \label{linearsecsproperties} \cite[Lemma 8.4]{viana2014lectures} If $\{\cL_\alpha: \alpha \in I\}$ is an arbitrary family of linear arrangements in $\Gr(p,d)$ then $\cap_\alpha \cL_\alpha$ coincides with the intersection of the $\cL_\alpha$ over a finite family. Hence the family of linear arrangements is closed under finite union and arbitrary intersections and moreover for $B \in \GL(d)$ the inclusion $B(\cL) \subseteq (\cL)$ implies that $B(\cL) = \cL$.
\end{lemma}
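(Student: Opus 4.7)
The plan is to exploit the fact that linear arrangements are Zariski-closed subsets of $\Gr(p,d)$ and then appeal to the Noetherian property of the Zariski topology. Via the Plücker embedding $\Gr(p,d) \hookrightarrow \P\Lambda^p(\R^d)$, each hyperplane section is cut out by a single linear form, so every linear section (a finite intersection of hyperplane sections) and every linear arrangement (a finite union of linear sections) is Zariski closed. Since $\Gr(p,d)$ sits inside Noetherian projective space, it is itself Noetherian, and descending chains of Zariski-closed subsets must terminate.

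Before invoking that property I would verify that the collection of linear arrangements is closed under finite intersection (closure under finite union being tautological). Given $\cL = \bigcup_i L_i$ and $\cM = \bigcup_j M_j$, distributivity yields $\cL \cap \cM = \bigcup_{i,j}(L_i \cap M_j)$, and each $L_i \cap M_j$ is a linear section since it is a finite intersection of hyperplane sections. With this in hand, for an arbitrary family $\{\cL_\alpha\}_{\alpha \in I}$ I would enumerate indices $\alpha_1, \alpha_2, \ldots$ and form the descending chain $\cL_{\alpha_1} \supseteq \cL_{\alpha_1} \cap \cL_{\alpha_2} \supseteq \cdots$ of linear arrangements. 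Each term is Zariski closed, so by Noetherianity the chain stabilizes at some $\cL_{\alpha_1}\cap\cdots\cap \cL_{\alpha_n}$. For any remaining index $\alpha \in I$, the set $\cL_{\alpha_1}\cap\cdots\cap \cL_{\alpha_n}\cap \cL_\alpha$ sits further along in the extended chain and therefore equals $\cL_{\alpha_1}\cap\cdots\cap \cL_{\alpha_n}$; hence $\bigcap_{\alpha\in I}\cL_\alpha$ coincides with this finite intersection, and in particular the class of linear arrangements is closed under arbitrary intersection.

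For the last assertion, the action of $B \in \GL(d)$ on $\R^d$ induces a linear isomorphism of $\Lambda^p(\R^d)$ which sends hyperplanes to hyperplanes, and therefore carries linear sections to linear sections and linear arrangements to linear arrangements. If $B(\cL) \subseteq \cL$, iterating produces a descending chain $\cL \supseteq B(\cL) \supseteq B^2(\cL) \supseteq \cdots$ of linear arrangements, which by the Noetherian step above must stabilize: $B^{N+1}(\cL) = B^N(\cL)$ for some $N \geq 0$. Applying $B^{-N}$, which is well-defined because $B \in \GL(d)$, yields $B(\cL) = \cL$.

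The main thing to be careful about is phrasing the Noetherian step in a way that matches the elementary flavor of this background section. If invoking projective Zariski topology feels heavier than needed, an alternative is to encode each linear arrangement in a canonical form as a finite antichain of the linear subspaces $W \subseteq \Lambda^p(\R^d)$ whose Plücker hyperplanes cut out its components, and to induct on the sorted tuple of dimensions; the finite-dimensionality of $\Lambda^p(\R^d)$ then bounds the length of any strictly descending chain explicitly, and the rest of the argument goes through verbatim.
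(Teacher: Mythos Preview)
The paper does not supply its own proof of this lemma; it is stated with a citation to \cite[Lemma 8.4]{viana2014lectures} and used as a black box. So there is nothing to compare against here beyond assessing correctness.

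Your argument is sound. The Noetherian approach via the Pl\"ucker embedding is the standard one, and your treatment of the final assertion (iterate $B$, stabilize, cancel with $B^{-N}$) is clean. One presentational point worth tightening: when you write ``enumerate indices $\alpha_1,\alpha_2,\ldots$'' you implicitly treat $I$ as countable. The cleaner phrasing is to observe that the collection of all \emph{finite} intersections $\bigcap_{\alpha\in F}\cL_\alpha$, $F\subset I$ finite, is a nonempty family of Zariski-closed sets and hence has a minimal element $\bigcap_{\alpha\in F_0}\cL_\alpha$; then for any $\alpha\in I$ the set $\bigcap_{\alpha\in F_0\cup\{\alpha\}}\cL_\alpha$ is also in the family and contained in the minimal element, forcing equality. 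This is what your ``extended chain'' sentence is reaching for, just stated without the spurious enumeration. The alternative you sketch at the end (inducting on the multiset of dimensions of the defining subspaces of $\Lambda^p(\R^d)$) also works and is closer in spirit to how Viana's text handles it.
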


The space of linear sections $\cS = \P S \cap \Gr(p,d)$, where $S$ denotes a subspace of $\Lambda^p(\R^d)$, is topologized via the topology for linear subspaces of $\Lambda^p(\R^d)$. Namely, we say that $\cS_n \to \cS$ if and only if $S_n \to S$. 

In \cite[Chapter 8]{viana2014lectures}, it is claimed that the space of geometric linear sections is compact in order to prove simplicity of spectrum under a version of twisting and pinching. However, as we see in Example \ref{ex:counterexample} below, there seems to be an oversight in this claim. Because of this we state irreducibility criteria in terms of linear sections that are not necessarily geometric.

The dimension of a linear section $\cS = \P S \cap \Gr(p,d)$ is defined simply as the dimension of $S$ as a subspace of $\Lambda^p (\R^d)$. Once we remove the geometric restriction, we regain compactness of the space of linear sections of a fixed dimension:

\begin{example}\label{ex:counterexample} In this example, we show that if $D_n$ is a sequence of geometric linear sections of a Grassmannian, then $\lim_n D_n$ is not necessarily a geometric linear section: it is still a linear section, but it may no longer be geometric. We construct our example in $\Gr(2,4)$.

Let $\eta = e_1 \wedge e_2 \in \Lambda^2 (\R^4)$ and $\omega_n = (e_1 + n^{-1}e_3) \wedge (e_2 + n^{-1}e_4) \in \Lambda^2 (\R^4)$. We will write $e_{ij} := e_i \wedge e_j$. Then $H_\eta$ and $H_{\omega_n}$, the hyperplane sections defined by $\eta$ and $\omega_n$ respectively, are:
$$\begin{aligned}
    H_\eta &= \text{span} \langle e_{12}, \,e_{13}, \,e_{14}, \,e_{23}, \, e_{24}\rangle,  \\
    H_{\omega_n} &= \text{span} \langle e_{13}, \, e_{24},\, e_{12}-n^{-2}e_{34},\, e_{14} + e_{23}, \, e_{14} + n^{-1}e_{34} \rangle.  \\
\end{aligned}
$$
Thus
$$S_n := H_\eta \cap H_{\omega_n} = \text{span} \langle e_{13}, \, e_{24}, \, e_{14}+e_{23}, \, e_{12}+n^{-1}e_{14} \rangle,$$
is by definition a sequence of geometric linear sections when intersected with $\Gr(2,4)$. Call $D_n=S_n\cap \Gr(2,4)$, which may be identified with the decomposable tensors $S_n\subseteq \Lambda^2(\R^4)$.  We claim that $D=\lim_{n} D_n$ is not a geometric linear section of $\Gr(2,4)$. 

To see this we will first find a description of what $D$ is, then we will show that $D$ cannot be geometric because any decomposable $1$-form vanishing when wedged with all elements of $D$ is a multiple of $e_{12}$.

To proceed we use that $\omega\in \Lambda^2(\R^4)$ is decomposable if and only if $\omega^2=0$. By using this we may calculate which tensors in $S_n$ are decomposable. Namely we see that 
\[
(a(e_{12}+n^{-1}e_{14})+be_{13}+ce_{24}+d(e_{14}+e_{23}))^2=0
\]
if and only if $d^2+adn^{-1}-bc=0$. 

If we let $S=\lim_n S_n$ then 
\[S = \text{span} \langle e_{12}, \, e_{13}, \, e_{24}, \, e_{14}+e_{23} \rangle,
\]
In particular, if we calculate as before, we see that $S\cap \Gr(2,4)$ is equal to the tensors of the form
\[
(ae_{12}+be_{13}+ce_{24}+d(e_{14}+e_{23}))^2
\]
with $d^2-bc=0$. Thus by comparing pointwise limits for different values of $a,b,c,d$, we find that $D=S\cap \Gr(2,4)$. In particular, note that some of the forms in $D$ are $e_{12},e_{13},e_{24}$ and $(e_1+e_2)\wedge (e_3+e_4)$. Thus any $\omega\in \Lambda^2(\R^4)$ that vanishes when wedged with things in $D$ must be of the form
\[
ae_{12}+be_{14}+ce_{23}
\]
which is decomposable if and only if $bc=0$. In the cases that $a\neq 0$ and $b=0$, the form we get is a multiple of $e_2\wedge(e_1+ce_3)$, which does not vanish when wedged with $(e_1+e_2)\wedge (e_3+e_4)$ unless $c=0$. So, the form must be $e_{12}$. Similarly, if $c=0$, the form is a multiple of $e_1\wedge(e_2+be_4)$ and we must have $b=0$ as before. Thus if $a\neq 0$, the only forms we get are multiples of $e_{12}$. If $a=0$, then we get $e_{14}$ and $e_{23}$ neither of which vanish when wedged with $(e_1+e_2)\wedge (e_3+e_4)$. Thus the only decomposable forms vanishing when wedged with everything in $D$ are multiples of $e_{12}$. 

But this means that $D$ cannot be a geometric linear section of $\Gr(2,4)$ because it is not defined by $H_{e_{12}}$. Therefore the space of geometric linear sections is not compact.
\end{example}

When passing to the limit of the actions of linear maps on Grassmannians, it is useful to work with quasi-projective maps. We recall some definitions below. 

If $P\in \GL(d,\R)$ is a linear map then it induces a map $P_{\#}\colon \Gr(k,d)\to \Gr(k,d)$ for each $1\le k \le d$. We call such maps projective and often we denote the action of $P$ on Grassmannians simply by $P$ when there is no ambiguity. 

The space of projective maps on $\Gr(k,d)$ admits a very useful natural compactification which we now describe. Recall that we may embed $\Gr(k,d)$ in $\P \Lambda^k(\R^d)$, so the space of projective maps of $\Gr(k,d)$ embeds into the space of linear maps of $\Lambda^k(\R^d)$ with norm 1 with respect to the natural norm of $\Lambda^k(\R^d)$. Since the latter is clearly compact, the closure of the set of projective maps regarded as above must also be, and its elements are referred to as \textit{quasi-projective maps}. For more details of this construction, we refer the reader to \cite[Section 2.1]{avila2007simplicity}.

A quasi-projective map $Q$ has a well-defined kernel when regarded as a linear map on $\Lambda^k(\R^d)$. We define $\ker Q \subseteq \Gr(k,d)$ to be the intersection of its kernel as a linear map on $\Lambda^k(\R^d)$ with $\Gr(k,d)$, again using the embedding of $\Gr(k,d)$ in $\P \Lambda^k(\R^d)$. Since linear maps of the form $\Lambda^k P$ map rank 1 tensors to rank 1 tensors in $\Lambda^k(\R^d)$ and the space of rank 1 tensors is compact in $\P\Lambda^k(\R^d)$, for $\xi \in \Gr(k,d) \setminus \ker Q$, we may define $Q(\xi) \in \Gr(k,d)$ using the corresponding action of $Q$ on $\P \Lambda^k(\R^d)$. Hence quasi-projective maps can be regarded as functions $Q: \Gr(k,d) \setminus \ker Q \to \Gr(k,d)$.

\begin{lemma} \cite[Lemma 2.3]{avila2007simplicity} The kernel of any quasi-projective map $Q$ on $\Gr(k,d)$ is contained in some hyperplane section of $\Gr(k,d)$.
\end{lemma}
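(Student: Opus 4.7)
The plan is to reduce the statement to a simple observation about proper linear subspaces of $\Lambda^k(\R^d)$, using the Poincaré-type duality between $\Lambda^k(\R^d)$ and $\Lambda^{d-k}(\R^d)$.

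First, I would unpack what it means for $Q$ to be a quasi-projective map. By construction, $Q$ arises as a limit of linear maps of the form $\Lambda^k P_n / \|\Lambda^k P_n\|$ on $\Lambda^k(\R^d)$, so it is a nonzero linear endomorphism of $\Lambda^k(\R^d)$ (of norm $1$). Its linear kernel $\widetilde K := \ker\bigl(Q\colon \Lambda^k(\R^d)\to \Lambda^k(\R^d)\bigr)$ is therefore a proper linear subspace of $\Lambda^k(\R^d)$, hence contained in some linear hyperplane $H\subseteq \Lambda^k(\R^d)$. By the definition given in the paper, $\ker Q \subseteq \Gr(k,d)$ is $\widetilde K \cap \Gr(k,d)$ (viewed via the Plücker embedding), so it suffices to exhibit $H$ in the form $H_\omega$ for some $\omega \in \Lambda^{d-k}(\R^d)$.

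Next, I would invoke the standard nondegenerate pairing
\[
\Lambda^k(\R^d)\times \Lambda^{d-k}(\R^d)\longrightarrow \Lambda^d(\R^d)\cong \R, \qquad (\eta,\omega)\longmapsto \eta\wedge\omega,
\]
which identifies $\Lambda^{d-k}(\R^d)$ with the dual space $\Lambda^k(\R^d)^*$. Under this identification, every linear hyperplane in $\Lambda^k(\R^d)$ is the kernel of some nonzero linear functional, and thus has the form $H_\omega = \{\eta : \eta\wedge\omega = 0\}$ for a suitable $\omega \in \Lambda^{d-k}(\R^d)$. Applying this to the hyperplane $H$ containing $\widetilde K$ yields $\widetilde K \subseteq H_\omega$, and intersecting with $\Gr(k,d)$ gives $\ker Q \subseteq H_\omega\cap \Gr(k,d)$, which is by definition a hyperplane section of $\Gr(k,d)$.

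There is no real obstacle here beyond keeping two uses of the word ``kernel'' straight: the linear kernel of $Q$ on $\Lambda^k(\R^d)$ versus the geometric kernel $\ker Q$ on $\Gr(k,d)$ (the former intersected with the Plücker image). The only other thing worth noting is that one should not hope for $\omega$ to be decomposable in general; the lemma only asserts containment in a hyperplane section, not a \emph{geometric} one, which is consistent with the flexibility illustrated in Example~\ref{ex:counterexample}.
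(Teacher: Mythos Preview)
Your proof is correct and is essentially the standard argument. Note that the paper does not supply its own proof of this lemma but simply cites \cite[Lemma 2.3]{avila2007simplicity}; your argument via the nondegenerate wedge pairing $\Lambda^k(\R^d)\times\Lambda^{d-k}(\R^d)\to\Lambda^d(\R^d)$ is precisely the intended one, and your closing remark that $\omega$ need not be decomposable is on point.
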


For a measure $\nu$ on $\Gr(k,d)$, the pushforward measure $Q_* \nu$ is well-defined as long as $\nu(\ker Q) = 0$. The following lemma will be very useful in our proof:
\begin{lemma}
\cite[Lem.~2.4]{avila2007simplicity} \label{lem: quasiprojectiveconvergence }If $(P_n)$ is a sequence of projective maps converging to some
quasi-projective map $Q$ of $\Gr(k,d)$ and $(\nu_n)$ is a sequence of probability
measures in $\Gr(k,d)$ converging weakly to some probability $\nu$ with $\nu(\ker Q)=0$
then $(P_n)_*\nu_n$ converges weakly to $Q_*\nu$.
\end{lemma}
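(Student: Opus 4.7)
The plan is to test against an arbitrary bounded continuous function $\varphi\colon \Gr(k,d)\to \R$ and show $\int \varphi\, d(P_n)_*\nu_n\to \int \varphi\, dQ_*\nu$. Since $Q$ is only defined off $\ker Q$, the composition $\varphi\circ Q$ is continuous merely on the open set $\Gr(k,d)\setminus\ker Q$; the idea is to truncate near $\ker Q$ using a continuous cutoff, then exploit both $\nu(\ker Q)=0$ and the weak convergence $\nu_n\to\nu$ to control the truncated portion on each side.

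The core technical input I would first establish is that $P_n\to Q$ \emph{uniformly} on every compact subset $K\subseteq \Gr(k,d)\setminus\ker Q$. Viewing $P_n$ and $Q$ as unit-norm operators on $\Lambda^k(\R^d)$, and lifting $\xi\in K$ to a unit decomposable vector $\hat\xi$, compactness of $K$ together with the definition of $\ker Q$ yields a uniform lower bound $\|Q\hat\xi\|\ge c_K>0$ on $K$. Combined with operator-norm convergence $\|P_n-Q\|\to 0$, this gives $\|P_n\hat\xi\|\ge c_K/2$ for large $n$, and renormalizing yields uniform convergence of $P_n\xi$ to $Q\xi$ on $K$.

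Given $\epsilon>0$, regularity of $\nu$ provides an open neighborhood $U\supset \ker Q$ with $\nu(\overline U)<\epsilon$, and Urysohn furnishes a continuous $\chi\colon \Gr(k,d)\to[0,1]$ vanishing on $U$ and equal to $1$ off $\overline U$. The product $\chi\cdot(\varphi\circ Q)$ extends continuously by $0$ across $\ker Q$, and by the uniform convergence step, $\chi\cdot(\varphi\circ P_n)\to\chi\cdot(\varphi\circ Q)$ uniformly on $\Gr(k,d)$. Combining uniform convergence with $\nu_n\to\nu$ weakly gives
\[
\int \chi\cdot(\varphi\circ P_n)\, d\nu_n \;\longrightarrow\; \int \chi\cdot(\varphi\circ Q)\, d\nu.
\]
The residual integrals $\int (1-\chi)\,\varphi\circ P_n\, d\nu_n$ and $\int (1-\chi)\,\varphi\circ Q\, d\nu$ are bounded in absolute value by $\|\varphi\|_\infty\nu_n(\overline U)$ and $\|\varphi\|_\infty\nu(\overline U)$ respectively; the first is at most $2\|\varphi\|_\infty\epsilon$ eventually by the Portmanteau theorem applied to the closed set $\overline U$, while the second is at most $\|\varphi\|_\infty\epsilon$. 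Sending $\epsilon\to 0$ concludes the proof.

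The main obstacle is the first step. Quasi-projective convergence gives only normwise convergence of $P_n$ to $Q$ on $\Lambda^k(\R^d)$, which a priori allows $P_n\hat\xi$ to become small in norm and oscillate wildly after projectivization; the uniform lower bound $\|Q\hat\xi\|\ge c_K$ on compact sets disjoint from $\ker Q$ is precisely what rules this out, and it is also the reason the hypothesis $\nu(\ker Q)=0$ is essential. Once this uniform convergence is available, the remainder is a routine truncation combining Portmanteau with regularity.
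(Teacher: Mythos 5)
The paper does not reprove this lemma; it is cited directly from Avila--Viana \cite{avila2007simplicity}. Your argument is correct and follows the standard line of reasoning for this result: establish uniform convergence of $P_n$ to $Q$ on compact sets disjoint from $\ker Q$ (via the uniform lower bound on $\|Q\hat\xi\|$ and operator-norm convergence), then truncate near $\ker Q$ with a Urysohn cutoff and control the residuals with Portmanteau together with $\nu(\ker Q)=0$; this is essentially the same approach as in the cited source.
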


\subsection{Invariant \emph{u}-states} The cocycle $\hA$ induces a cocycle on the Grassmannian bundle $\hat{\Sigma}\times \Gr(k,d)$ in the natural way, which by abuse of notation we also denote by $\hA$. Similarly, the stable and unstable holonomies induce maps on the Grassmannians which we denote by the same symbols.

\begin{definition}\label{def:ustate} For a fiber bunched cocycle $\hA\colon \hSigma \to \GL(d,\R)$ and a shift-invariant measure $\hmu$ on $\hSigma$, we say that a probability measure $\hm$ on $\hSigma \times \Gr(k,d)$ is a \emph{$u$-state} over $\hat{\mu}$ if it is invariant by the cocycle and 
$$H^u_{\hx \hy} \hm_{\hx} = \hm_{\hy},$$
for any $\hx$ and $\hy$ in the same local unstable leaf, and where $\{\hm_{\hx} : \hx \in \hSigma\}$ is a family of measures on $\Gr(k,d)$ given by the disintegration of $\hm$ along the partition $\{\{\hx\} \times \Gr(k,d): \hx \in \hSigma\}$. 
\end{definition}

When $\hmu$ has continuous local product structure, $u$-states have several other useful properties. We assume that $\hA$ is a fiber bunched cocycle and $\hmu$ is a measure with continuous local product structure. The following propositions from \cite{avila2007simplicity} sum up the relevant properties about $u$-states used in our proof:

\begin{proposition} \cite[Proposition 4.2]{avila2007simplicity} There exists a $u$-state $\hm$ on $\Gr(k,d)$.
\end{proposition}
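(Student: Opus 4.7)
The plan is to apply a Krylov--Bogolyubov type argument to the skew product $F(\hx, \xi) = (\hsigma \hx, \hA(\hx) \xi)$ on $\hSigma \times \Gr(k,d)$, starting from an initial measure whose disintegrations are manifestly covariant under unstable holonomies, and then showing that this covariance is preserved both by the dynamics and by taking weak-$*$ limits.

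To construct an initial measure $\hm_0$ with $u$-covariant disintegrations, I would exploit the local product structure of $\hmu$ from Equation \eqref{eqn:definition_of_product_structure}. On each cylinder $[0;i]$, the product structure parametrizes points by $(\pi_s(\hx), \pi_u(\hx))$, so I can choose a measurable base section $\hz_i \colon \pi_s([0;i]) \to [0;i]$ picking out one representative per local unstable leaf, and then for an arbitrary probability measure $\nu_0$ on $\Gr(k,d)$ I would define
\[
\hm_{0,\hx} = H^u_{\hz_i(\pi_s(\hx)), \hx}(\nu_0), \qquad \hx \in [0;i].
\]
By construction, the measure $\hm_0 = \int_{\hSigma} \delta_{\hx} \otimes \hm_{0,\hx} \, d\hmu(\hx)$ projects onto $\hmu$ and satisfies $H^u_{\hx\hy}\hm_{0,\hx} = \hm_{0,\hy}$ whenever $\hy \in W^u_{loc}(\hx)$, using the compatibility $H^u_{\hy\hz} \circ H^u_{\hx\hy} = H^u_{\hx\hz}$ from Proposition \ref{prop:holonomies_exist}.

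The key observation for the averaging step is that $u$-covariance is preserved by $F_*$: applying the unstable analog of the intertwining identity in Proposition \ref{prop:holonomies_exist}(2), namely $\hA(\hx) \circ H^u_{\hy\hx} = H^u_{\hsigma\hy, \hsigma\hx} \circ \hA(\hy)$, one checks directly that if $\hm$ has $u$-covariant disintegrations then so does $F_*\hm$. Hence the Cesaro averages $\hm_n = \frac{1}{n} \sum_{k=0}^{n-1} F^k_* \hm_0$ all have $u$-covariant disintegrations and all project onto $\hmu$. Extracting a weakly convergent subsequence using the compactness of $\Gr(k,d)$ produces a limit measure $\hm$ that is $F$-invariant by the standard Krylov--Bogolyubov argument and projects onto $\hmu$.

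The main obstacle will be verifying that the $u$-holonomy invariance of the disintegrations survives the weak-$*$ limit, since $u$-covariance is a statement about conditional measures rather than the global measure and is therefore not automatically closed. To handle this I would exploit the continuity of $H^u_{\hx\hy}$ in both variables provided by the fiber-bunched hypothesis together with the continuity of the density $\psi$ appearing in Equation \eqref{eqn:definition_of_product_structure}; this should allow me to reformulate $u$-covariance of $\hm_n$ as an equality of two continuous families of probability measures, a condition which is weak-$*$ closed on measures projecting to $\hmu$, and therefore passes to the limit $\hm$.
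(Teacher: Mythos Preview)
The paper does not give its own proof of this statement: it simply cites \cite[Proposition 4.2]{avila2007simplicity} and moves on. So there is nothing in this paper to compare your argument against.

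That said, your outline is the standard construction and is essentially what Avila--Viana do. A couple of remarks. First, the step you flag as the main obstacle is indeed the only nontrivial one, and your proposed resolution is the right one: using the H\"older continuity of $H^u$ and the continuous product structure of $\hmu$, one shows that the set of probability measures on $\hSigma\times\Gr(k,d)$ projecting to $\hmu$ and having $u$-covariant disintegrations is weak-$*$ closed (one convenient way is to test against functions of the form $(\hx,\xi)\mapsto\phi(\hx)\psi(H^u_{\hx,\hz(\hx)}\xi)$ for a continuous local section $\hz$, which reduces $u$-covariance to an identity between integrals of continuous functions). Second, in Avila--Viana the argument is often phrased slightly differently: they work with the projected system on the one-sided shift and identify $u$-states with measures admitting a continuous disintegration $x\mapsto m_x$ over $\Sigma$, for which compactness and invariance under the push-forward operator are transparent. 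Your direct two-sided formulation is equivalent and equally valid.
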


 The projection $m$ of an invariant measure $\hm$  on $\hSigma \times \Gr(k,d)$ to $\Sigma \times \Gr(k,d)$ similarly admits a disintegration over the fibers $\{x\} \times \Gr(k,d)$, which we denote by $\{m_x : x \in \Sigma\}$. A martingale convergence argument relates $\hm_{\hx}$ to $m_x$ for any invariant measure (not necessarily a $u$-state):
 
 \begin{proposition} \label{prop: martingale} \cite[Corollary 3.4]{avila2007simplicity} Let $\hm$ be an invariant probability measure on $\hSigma \times \Gr(k,d)$ which projects to $\hmu$. For $\hmu$-a.e. $\hx \in \hSigma$:
 $$\hm_{\hx} = \lim_{n \to \infty} A^n(\hsigma^{-n}(\hx))_* m_{P(\hsigma^{-n}(\hx))},$$
 where the limit is taken in the weak* topology, and $P$ is the projection onto the one-sided shift $\Sigma^u$ as before.
 \end{proposition}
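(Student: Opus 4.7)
The plan is to deduce this via Doob's martingale convergence theorem applied to a nested sequence of measurable partitions of $\hat{\Sigma}$ that becomes finer as we let more past coordinates be recorded. First I would reduce to the case where the cocycle is constant along local stable leaves by replacing $\hat{A}$ with the reduced cocycle $A$ from \eqref{eqn:def_reduced_cocycle} and conjugating $\hm$ correspondingly by the stable holonomies (which act continuously on the Grassmannian fibers). Under this reduction, $A$ descends to a function on $\Sigma^u$, which justifies writing $A^n(\hsigma^{-n}\hx)$ rather than $\hat{A}^n(\hsigma^{-n}\hx)$ in the statement, and moreover a direct check using the cocycle relation shows that $\hz \mapsto A^n(\hz)$ is constant on $W^s_{loc}(\hsigma^{-n}\hx)$, with common value $A^n(\hsigma^{-n}\hx)$.

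Next I would introduce the filtration: let $\cP_n$ be the partition of $\hat{\Sigma}$ whose atoms are the fibers of $\Pi_n \coloneqq P \circ \hsigma^{-n}$, so that the atom through $\hx$ is $\hsigma^n(W^s_{loc}(\hsigma^{-n}\hx))=\{\hy : \hy_i=\hx_i \text{ for all } i\ge -n\}$. The partitions $\cP_n$ are nested and refine as $n$ grows, and together they generate the Borel $\sigma$-algebra of $\hat{\Sigma}$ modulo $\hmu$-null sets. Applying Doob's theorem to the disintegration of $\hm$ along the corresponding $\sigma$-algebras $\cF_n$ yields, for $\hmu$-a.e.~$\hx$, the weak-$*$ convergence
\[
\hm_{\hx} = \lim_{n\to\infty} E[\hm\mid \cF_n]_{\hx}.
\]
The main computation is then to identify $E[\hm\mid \cF_n]_{\hx}$ with the cocycle pushforward in the statement. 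Parameterizing the atom by $\hy=\hsigma^n\hz$ with $\hz\in W^s_{loc}(\hsigma^{-n}\hx)$ and using both the invariance $\hm_{\hy} = A^n(\hz)_* \hm_{\hz}$ and the constancy of $A^n$ on this local stable leaf, I would pull the pushforward outside the integral:
\[
E[\hm \mid \cF_n]_{\hx} = A^n(\hsigma^{-n}\hx)_* \int_{W^s_{loc}(\hsigma^{-n}\hx)} \hm_{\hz}\, d\bar{\mu}^s_{\hsigma^{-n}\hx}(\hz),
\]
where $\bar{\mu}^s_{\hsigma^{-n}\hx}$ denotes the normalized conditional of $\hmu$ on the local stable leaf. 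Since the fibers of $P$ are exactly local stable leaves, the bracketed integral is by definition $m_{P(\hsigma^{-n}\hx)}$, and combining this identity with martingale convergence gives the proposition.

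The most delicate step is the change of variables that matches the conditional of $\hmu$ on the atom of $\cP_n$ with $\bar{\mu}^s_{\hsigma^{-n}\hx}$ transported by $\hsigma^n$. This is where I would invoke Lemma \ref{lem:measure_distortion_local_stable} together with the continuous product structure of $\hmu$, which together guarantee that $\hsigma^n_* \bar{\mu}^s_{\hsigma^{-n}\hx}$ agrees up to normalization with the conditional of $\hmu$ on $\cP_n(\hx)$. Both the product structure of the measure and the reduction making $A$ constant on local stable leaves enter this step in an essential way; once it is in place, the rest of the argument is the cocycle computation above and the standard application of martingale convergence.
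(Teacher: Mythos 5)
Your martingale argument is correct and matches the proof in the cited source \cite[Cor.~3.4]{avila2007simplicity}, which the paper does not reprove but simply invokes. The nested filtration whose atoms are the cylinders $\{\hy : \hy_i = \hx_i, \ i \geq -n\}$, Doob convergence for the conditional measures, the cocycle invariance $\hm_{\hsigma^n\hz} = A^n(\hz)_*\hm_{\hz}$ together with the constancy of $A^n$ along $W^s_{loc}(\hsigma^{-n}\hx)$, and finally the identification $\hsigma^n_*\bar{\mu}^s_{\hsigma^{-n}\hx} = \hmu_{C_n(\hx)}$ via the transitivity of conditional measures and the distortion control of Lemma \ref{lem:measure_distortion_local_stable} are exactly the right ingredients, assembled in the standard order.
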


 Moreover, the following also holds for any $m$, not necessarily a $u$-state:

\begin{lemma}\label{lem:cts_disintegration_ustate} \cite[Lemma 4.5]{avila2007simplicity} Let $\hm$ be an invariant probability measure on $\hSigma \times \Gr(k,d)$ which projects to $\hmu$. Then the measures $\{m_x: x \in \Sigma\}$ given by 
$$m_x = \int_{W^s_{loc}(x)} \hm_{\hx} \, d\hmu^s_x(\hx), $$
define a disintegration of $m$ along $\{\{x\} \times \Gr(k,d): x \in \Sigma\}.$
\end{lemma}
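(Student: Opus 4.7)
The plan is to verify the disintegration formula directly by checking, for any bounded measurable test function $\phi\colon\Sigma\times\Gr(k,d)\to \R$, that $\int \phi\,dm$ equals $\int_\Sigma \int \phi(x,v)\,dm_x(v)\,d\mu(x)$, where $\mu = P_*\hmu$. This reduces the lemma to a Fubini computation combining two disintegrations: the disintegration $\hm = \int \hm_{\hx}\,d\hmu(\hx)$ given in the hypothesis, and the disintegration of $\hmu$ along fibers of $P$, which is supplied by the continuous local product structure of $\hmu$.

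First I would observe that by definition $m = (P\times \id)_*\hm$, so
\[
\int \phi\,dm = \int_{\hSigma\times \Gr(k,d)} \phi(P(\hx),v)\,d\hm(\hx,v) = \int_{\hSigma}\int_{\Gr(k,d)} \phi(P(\hx),v)\,d\hm_{\hx}(v)\,d\hmu(\hx),
\]
using the given disintegration. Next, the product-structure formula \eqref{eqn:integral_against_phi_psi}, together with the definition $\hmu^s_{\hx} = \psi(\hy)\hmu^s_i$ for $\hy\in W^s_{loc}(\hx)$, gives that for any integrable $f$ on $\hSigma$,
\[
\int_{\hSigma} f(\hx)\,d\hmu(\hx) = \int_\Sigma \int_{W^s_{loc}(x)} f(\hx)\,d\hmu^s_x(\hx)\,d\mu(x).
\]
Applying this with $f(\hx) = \int \phi(P(\hx),v)\,d\hm_{\hx}(v)$ and using that $P(\hx)=x$ for $\hx \in W^s_{loc}(x)$ yields
\[
\int \phi\,dm = \int_\Sigma \int_{W^s_{loc}(x)} \int_{\Gr(k,d)} \phi(x,v)\,d\hm_{\hx}(v)\,d\hmu^s_x(\hx)\,d\mu(x).
\]
Exchanging the two inner integrals by Fubini identifies the inner double integral with $\int \phi(x,v)\,dm_x(v)$, which is exactly the desired identity.

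The only subtle point to verify is that $\{m_x\}_{x\in \Sigma}$ is an honest weak-$*$ measurable family of probability measures, so that the formula above genuinely presents a disintegration in the sense of Rokhlin. This is routine: measurability of $x\mapsto m_x$ follows from the measurability of $\hx\mapsto \hm_{\hx}$ and the continuous (hence measurable) dependence of the conditional measures $\hmu^s_x$ on $x$ coming from the product structure hypothesis, while $m_x(\Gr(k,d))=1$ is immediate since $\hmu^s_x$ is a probability measure and each $\hm_{\hx}$ is. Notice that invariance of $\hm$ under the cocycle is never used; it appears in the hypothesis only because the lemma is applied later to invariant measures. I do not anticipate any serious obstacle: the content is essentially the commutativity of Fubini for the two nested disintegrations.
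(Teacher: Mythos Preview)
Your argument is correct and is exactly the standard Fubini computation one expects here. Note, however, that the paper does not give its own proof of this lemma: it simply cites \cite[Lemma 4.5]{avila2007simplicity} and states the result. So there is no proof in the paper to compare against; your write-up effectively supplies the (routine) details behind that citation.
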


When $m$ is a $u$-state, the lemma above and invariance by unstable holonomies additionally gives continuity of the disintegration $\{m_x: x \in \Sigma\}$:

\begin{proposition} \cite[Proposition 4.4]{avila2007simplicity} \label{prop: weakcont} Any $u$-state $m$ on $\Sigma \times \Gr(k,d)$ admits some disintegration $\{m_x : x \in \Sigma\}$ into measures on $\Gr(k,d)$ which vary weak* continuously with respect to $x$.
\end{proposition}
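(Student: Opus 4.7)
The plan is to show that the specific disintegration
\[
m_x = \int_{W^s_{loc}(x)} \hm_{\hx}\, d\hmu^s_x(\hx)
\]
provided by Lemma \ref{lem:cts_disintegration_ustate} is already weak* continuous in $x$. Fix $f \in C(\Gr(k,d))$; it suffices to prove that $x \mapsto \int f\, dm_x$ is continuous on each one-sided cylinder $[0;i]^u$, since $\Sigma$ is a finite disjoint union of such cylinders.

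For $x, y \in [0;i]^u$, I would introduce a ``same past'' identification $\Phi_{xy}\colon W^s_{loc}(x) \to W^s_{loc}(y)$ sending $\hx \in P^{-1}(x) \subseteq \hSigma$ to the unique point $\hy \in P^{-1}(y)$ with $\hy_n = \hx_n$ for all $n \leq 0$. This map has two key properties. First, $\hx$ and $\Phi_{xy}(\hx)$ lie in the same local unstable leaf of $\hSigma$, so the $u$-state property yields
\[
\hm_{\Phi_{xy}(\hx)} = H^u_{\hx,\,\Phi_{xy}(\hx)}\, \hm_{\hx}.
\]
Second, using the local form $\hmu^s_{\hx} = \psi(\cdot)\, \hmu^s_i$ from \eqref{eqn:definition_of_product_structure} and the fact that $\pi_s \circ \Phi_{xy} = \pi_s$, a direct calculation shows that the pullback $\Phi_{xy}^* \hmu^s_y$ is absolutely continuous with respect to $\hmu^s_x$ with Radon–Nikodym derivative
\[
\rho_{xy}(\hx) = \psi(\Phi_{xy}(\hx))/\psi(\hx),
\]
which is continuous in $\hx$, uniformly bounded in $x, y$, and converges uniformly to $1$ as $y \to x$ by continuity of $\psi$.

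Combining these observations and changing variables via $\Phi_{xy}$ gives
\[
\int f\, dm_y = \int_{W^s_{loc}(x)} \left(\int (f \circ H^u_{\hx,\,\Phi_{xy}(\hx)})\, d\hm_{\hx}\right) \rho_{xy}(\hx)\, d\hmu^s_x(\hx).
\]
As $y \to x$, the point $\Phi_{xy}(\hx)$ converges to $\hx$ uniformly in $\hx \in W^s_{loc}(x)$, so by H\"older continuity of the unstable holonomies, $H^u_{\hx,\,\Phi_{xy}(\hx)}$ tends to the identity on $\Gr(k,d)$ uniformly in $\hx$, and hence $f \circ H^u_{\hx,\,\Phi_{xy}(\hx)} \to f$ uniformly on $\Gr(k,d)$. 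Combined with $\rho_{xy} \to 1$ uniformly and dominated convergence, this yields $\int f\, dm_y \to \int f\, dm_x$.

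The main technical subtlety lies in setting up the identification $\Phi_{xy}$ so that the two properties above hold simultaneously: invariance by unstable holonomies uses the $u$-state hypothesis in an essential way, while the continuity of the density $\rho_{xy}$ relies crucially on the continuous product structure of $\hmu$ and cannot be weakened. Once these two structural facts are in place, the weak* convergence argument is a standard application of dominated convergence.
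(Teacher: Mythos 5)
Your proof is correct and follows essentially the same route as the cited Avila--Viana argument: take the explicit disintegration from Lemma \ref{lem:cts_disintegration_ustate}, identify the stable leaves over nearby $x,y$ in a common cylinder, transport the conditionals $\hm_{\hx}$ by the unstable holonomy (the $u$-state property) and the conditional measures $\hmu^s$ by the density ratio of $\psi$ (the continuous product structure), and conclude by uniform convergence. The only cosmetic note is that the reference you cite for the local form $\hmu^s_{\hx}=\psi\cdot\hmu^s_i$ should be the displayed description following \eqref{eqn:integral_against_phi_psi} rather than \eqref{eqn:definition_of_product_structure} itself, and the final ``dominated convergence'' is really just uniform convergence of the integrand on a finite measure space.
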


Lastly, using the continuity property above, one also obtains the following invariance statement for the disintegration $m_x$:
\begin{proposition} \cite[Corollary 4.7]{avila2007simplicity} \label{prop: invustates} If $m$ is a $u$-state and $m_x$ is the continuous disintegration of $m$ from Lemma \ref{lem:cts_disintegration_ustate}, then:
$$m_x = \sum_{z \in \sigma^{-k}(x)} \frac{1}{J\sigma^k(z)} A^k(z) m_z,$$
for every $x \in \Sigma$ and $k \geq 1$, and $J\sigma^k$ is a bounded measurable function for each $k \geq 1$.
\end{proposition}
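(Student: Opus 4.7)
The plan is to obtain the identity as a direct consequence of the invariance of $m$ under the skew product $F(x,\xi) = (\sigma(x), A(x)\xi)$, combined with a transfer identity for $\sigma$ with respect to $\mu$ coming from Proposition \ref{prop:ac_of_pushforward}, and then to promote the resulting $\mu$-a.e.\ equation to a pointwise identity using the continuous disintegration from Proposition \ref{prop: weakcont}. Here $\mu$ denotes the projection of $\hat{\mu}$ to $\Sigma = \Sigma^u$, which is fully supported.

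First, Proposition \ref{prop:ac_of_pushforward} supplies a continuous strictly positive function $J\sigma\colon \Sigma \to (0,\infty)$ that serves as the Jacobian of $\sigma$ for $\mu$ on each cylinder $[0;i]$ where $\sigma$ is injective. Summing over this finite partition produces the transfer identity
\[
\int \phi(\sigma(z))\psi(z)\, d\mu(z) = \int \phi(x) \sum_{z \in \sigma^{-1}(x)} \frac{\psi(z)}{J\sigma(z)}\, d\mu(x)
\]
for any bounded measurable $\phi, \psi$ on $\Sigma$. Setting $J\sigma^k = \prod_{i=0}^{k-1} J\sigma \circ \sigma^i$ yields the analogous identity for $\sigma^k$, and since $J\sigma$ is continuous and strictly positive on the compact set $\Sigma$, each $J\sigma^k$ is bounded above and bounded away from zero.

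I now apply this to $F_* m = m$. For any bounded test function $\Phi$ on $\Sigma \times \Gr(k,d)$, invariance of $m$ together with the disintegration $\{m_x\}$ gives
\[
\int \int \Phi(x, \eta)\, dm_x(\eta)\, d\mu(x) = \int \int \Phi(\sigma(z), \eta)\, d(A(z)_* m_z)(\eta)\, d\mu(z).
\]
The right-hand side is $\int f(\sigma(z), z)\, d\mu(z)$ with $f(x, z) = \int \Phi(x, \eta)\, d(A(z)_* m_z)(\eta)$, so the transfer identity rewrites it as
\[
\int \int \Phi(x, \eta)\, d\left( \sum_{z \in \sigma^{-1}(x)} \frac{A(z)_* m_z}{J\sigma(z)} \right)(\eta)\, d\mu(x).
\]
Arbitrariness of $\Phi$ forces $m_x = \sum_{z \in \sigma^{-1}(x)} (J\sigma(z))^{-1} A(z)_* m_z$ for $\mu$-a.e.\ $x$, which is the $k=1$ statement almost everywhere; iterating, or equivalently applying the same argument to $F^k$, produces it for every $k \geq 1$.

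The last and most delicate step is upgrading this $\mu$-a.e.\ identity to an equality for every $x \in \Sigma$. Both sides depend weak* continuously on $x$: the left by Proposition \ref{prop: weakcont}; the right because $A$ and $J\sigma$ are continuous, $z \mapsto m_z$ is weak* continuous, and on the subshift of finite type the finite preimage set $\sigma^{-1}(x)$ is locally constant in $x$ on each cylinder $[0;i]$, so the sum is locally a continuous finite sum of weak* continuous measure-valued functions. Since $\mu$ has full support, a.e.\ equality of these weak* continuous objects extends to every $x \in \Sigma$. The principal technical obstacle is precisely this continuous-extension step: one has to check that the right-hand sum, whose number of terms jumps with the set of permitted predecessors of $x_0$, is genuinely weak* continuous in $x$, which in the end reduces to the local constancy of $x \mapsto \sigma^{-1}(x)$ on each cylinder of the subshift.
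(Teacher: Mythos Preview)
The paper does not give its own proof of this proposition; it simply cites \cite[Corollary~4.7]{avila2007simplicity}. Your argument is correct and is essentially the expected one: the identity follows $\mu$-a.e.\ from $F$-invariance of $m$ together with the transfer (change-of-variables) formula for $\sigma$ coming from the continuous positive Jacobian of Proposition~\ref{prop:ac_of_pushforward}, and is then upgraded to every $x$ using the weak* continuity of $x\mapsto m_x$ from Proposition~\ref{prop: weakcont} and full support of $\mu$. Your observation that the right-hand side is weak* continuous because on a subshift of finite type the set $\sigma^{-1}(x)$ is locally constant on each cylinder (so the sum is a fixed finite sum of continuous terms on each clopen cylinder) is exactly the point that makes the extension step work.
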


\section{Strong Irreducibility and Distribution of Osceledec subspaces}\label{sec:smoothness}

The first key step in the proof of Furstenberg's theorem and in the main theorem in \cite{avila2007simplicity} is using an irreducibility hypothesis to prove that the measure of $u$-states disintegrated over the $1$-sided shift cannot be concentrated on subspaces. We prove the corresponding result in our setting in Proposition \ref{smoothconditionals}. For the rest of the paper we fix a fiber-bunched cocycle $\hat{A}:\hat{\Sigma} \to \GL(d, \R)$.  

\begin{definition} A function $\cL$ from a topological space $X$ to the set of linear arrangments in $\Gr(k,d)$ is said to be a continuous family of $k$-linear arrangements on $X$ if there is an $n \in \N$ such that for each $x \in X$
$$\cL(x) = \cS_1(x) \cup \dots \cup \cS_n(x),$$
where each $\cS_i(x)$ is of the form $\P S_i(x) \cap \Gr(k,d)$ and the $S_i$ are continuous functions from $X$ to the space of linear subspaces of $\Lambda^k(\R^d)$ of a fixed dimension $d(i)$. 
The continuous family of linear arrangements is said to be non-trivial if $\cL(x)$ is neither all of $\Gr(k,d)$ nor the empty set for all $x \in X$.
\end{definition}

\begin{definition} \label{def: fullirreducible} A continuous cocycle $\hat{A}\colon \hat{\Sigma} \to \GL(d, \R)$ is  strongly reducible on $\Gr(k,d)$ if there exists a non-trivial continuous family of $k$-linear arrangements $\cL$ on $\hat{\Sigma}$ invariant by $\hA$, i.e., for every $\hx \in \hSigma$:
$$\begin{aligned}
&\hA(\hx)_* \cL(\hx) = \cL(\hsigma(\hx)). \\
\end{aligned}
$$
The cocycle $\hA$ is strongly irreducible on $\Gr(k,d)$ if it is not strongly reducible on $\Gr(k,d)$. The cocycle is said to be fully irreducible if it is strongly irreducible on $\Gr(k,d)$ for all $1 \leq k \leq d-1$.
\end{definition}

Fix a shift invariant measure $\hat{\mu}$  on $\hat{\Sigma}$ with continuous local product structure. The following key proposition shows that $u$-states of fully irreducible fiber-bunched cocycles on Grassmannians must be smooth in a certain sense: 

\begin{proposition} \label{smoothconditionals} Suppose $\hA$ is strongly irreducible on $\Gr(k,d)$. Let $\hat{m}$ be a $u$-state on $\hat{\Sigma} \times \Gr(k,d)$ associated to $\hat{\mu}$.  For $x \in \Sigma$, let $\{m_x\}_{x\in \Sigma}$ be the disintegration of $m$ over the fibers of $\Sigma \times \Gr(k,d)$. Then $m_x(\cS) = 0$ for every $x \in \Sigma$ and any linear section $\cS$ of $\Gr(k,d)$. In particular $m_x(H) = 0$ for every $x \in \Sigma$ and any hyperplane section $H$ of $\Gr(k,d)$.
\end{proposition}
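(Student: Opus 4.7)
The plan is to argue by contradiction: assuming $m_{x_0}(\mathcal{S}_0) > 0$ for some $x_0 \in \Sigma$ and some linear section $\mathcal{S}_0$ of $\Gr(k,d)$, I will construct from $\{m_x\}_{x\in\Sigma}$ a non-trivial continuous $\hat{A}$-invariant family of $k$-linear arrangements on $\hat{\Sigma}$, contradicting strong irreducibility on $\Gr(k,d)$. To set this up, let $d^{*}$ be the smallest integer in $\{1,\dots,\dim\Lambda^k(\R^d) - 1\}$ such that $m_x(\mathcal{S}) > 0$ for some $x$ and some linear section $\mathcal{S}$ of dimension $d^*$. The space of such sections is compact --- it is a Grassmannian inside $\Lambda^k(\R^d)$ --- and the weak-$*$ continuity of $x\mapsto m_x$ from Proposition \ref{prop: weakcont}, together with a portmanteau-type estimate (using that $\mathcal{S}_n \to \mathcal{S}$ implies eventual containment of $\mathcal{S}_n$ in every closed $\varepsilon$-neighborhood of $\mathcal{S}$), implies that
\[
t(x) := \max\{m_x(\mathcal{S}) : \dim \mathcal{S} = d^*\}
\]
is attained and upper semicontinuous on $\Sigma$. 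By minimality of $d^*$, any two distinct maximizers at the same $x$ intersect in a linear section of dimension $< d^*$ and hence of mass zero, so inclusion-exclusion bounds the number of maximizers at $x$ by $\lfloor 1/t(x)\rfloor$.

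The next step is to propagate maximality via the $u$-state invariance from Proposition \ref{prop: invustates}. Let $t^* := \sup_x t(x)$, attained on compact $\Sigma$. Apply the identity
\[
m_x = \sum_{z \in \sigma^{-1}(x)} \frac{1}{J\sigma(z)}\, A(z)_* m_z
\]
at a point $x$ with $t(x) = t^*$ and associated maximizing $d^*$-section $\mathcal{S}$. Since $A(z)\in\GL(d,\R)$ acts invertibly on $\Lambda^k(\R^d)$, the pullback $A(z)^{-1}\mathcal{S}$ is again a linear section of dimension $d^*$, so $m_z(A(z)^{-1}\mathcal{S}) \le t(z) \le t^*$. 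Taking $\mathcal{S} = \Gr(k,d)$ in the identity forces $\sum_z 1/J\sigma(z) = 1$; hence
\[
t^* = m_x(\mathcal{S}) = \sum_z \frac{1}{J\sigma(z)}\, m_z(A(z)^{-1}\mathcal{S})
\]
is a convex combination of numbers $\le t^*$ equal to $t^*$, forcing equality in every summand: $t(z) = t^*$ at every preimage, and $A(z)^{-1}$ bijects the maximizers at $x$ onto those at $z$. Iterating along preimages, $t \equiv t^*$ on the dense set $\bigcup_n \sigma^{-n}(x)$, and upper semicontinuity of $t$ combined with $t \le t^*$ forces $t \equiv t^*$ on all of $\Sigma$.

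The assignment $\mathcal{L}(x) := \bigcup\{\mathcal{S} : \dim \mathcal{S} = d^*,\ m_x(\mathcal{S}) = t^*\}$ is then a continuous, $A$-invariant family of non-trivial $k$-linear arrangements on $\Sigma$ with uniformly bounded component count $\lfloor 1/t^*\rfloor$. To promote this to $\hat{\Sigma}$ for $\hat{A}$, I transport via stable holonomies,
\[
\hat{\mathcal{L}}(\hx) := H^s_{\phi^u(\hx)\,\hx}\bigl(\mathcal{L}(P(\hx))\bigr).
\]
The reduced-cocycle formula \eqref{eqn:def_reduced_cocycle}, combined with $H^s_{\hx\,\phi^u(\hx)} \circ H^s_{\phi^u(\hx)\,\hx} = \id$ from Proposition \ref{prop:holonomies_exist}(1), converts $A$-invariance of $\mathcal{L}$ into $\hat{A}$-invariance of $\hat{\mathcal{L}}$, and H\"older continuity of $H^s$ yields continuity of $\hat{\mathcal{L}}$ on $\hat{\Sigma}$. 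This contradicts strong irreducibility on $\Gr(k,d)$.

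The part I expect to be most delicate is the setup of Step~1: verifying simultaneously that the maximum in the definition of $t(x)$ is attained (which relies on the compactness of the space of dimension-$d^*$ linear sections regarded as a sub-Grassmannian of $\Lambda^k(\R^d)$, the very point highlighted by Example \ref{ex:counterexample}) and that $t$ is upper semicontinuous in $x$. Once these are in place, the convex-combination argument of Step~2 goes through cleanly precisely because $A(z)$ is a linear isomorphism of $\Lambda^k(\R^d)$, and Step~3 is a standard holonomy transport.
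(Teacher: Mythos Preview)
Your proposal is correct and takes essentially the same approach as the paper: pick the minimal dimension $d^*$, use compactness of the $d^*$-Grassmannian in $\Lambda^k(\R^d)$ together with weak-$*$ continuity (Proposition~\ref{prop: weakcont}) to realize the supremum, propagate via the convex-combination identity of Proposition~\ref{prop: invustates}, and lift to $\hat\Sigma$ by stable holonomies exactly as you wrote. One point to tighten: from the convex-combination argument you only get that $A(z)^{-1}$ \emph{injects} the maximizers at $x$ into those at $z$, not a bijection; to conclude that $\mathcal L$ is a continuous family in the sense of the definition (a \emph{fixed} number of components, each continuous of fixed dimension) you still need the paper's extra step---start at an $x$ with maximal count $\ell$, so preimages also have exactly $\ell$, then at an arbitrary limit point check that the $\ell$ limiting sections remain distinct (two merging would force mass $\ge 2t^*$)---after which both the constant count and continuity follow.
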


To begin the proof we prove that we can reduce the problem to the reduced cocycle from Equation \eqref{eqn:def_reduced_cocycle} and the one-sided shift. Note that because the reduced cocycle is constant on local stable leaves that it defines a cocycle over the one-sided shift $\Sigma$.

\begin{proposition}
For every $1 \leq k \leq d$, there does not exist a non-trivial continuous family $\cL$ of $k$-linear arrangements on $\Sigma$ invariant by the cocycle on the one sided shift induced by the reduced cocycle from Equation \eqref{eqn:def_reduced_cocycle}.
\end{proposition}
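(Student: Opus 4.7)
The plan is to lift any non-trivial continuous invariant family on $\Sigma$ for the reduced cocycle back up to $\hat{\Sigma}$ and thereby produce a non-trivial continuous $\hat{A}$-invariant family on $\hat{\Sigma}$, contradicting strong irreducibility on $\Gr(k,d)$.

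Concretely, I argue by contradiction: suppose $\cL$ is a non-trivial continuous family of $k$-linear arrangements on $\Sigma$ with $A(\hat{x})_* \cL(P(\hat{x})) = \cL(P(\hat{\sigma}\hat{x}))$ for all $\hat{x}$, where $A$ is the reduced cocycle from \eqref{eqn:def_reduced_cocycle}. Define a candidate family on $\hat{\Sigma}$ by
\[
\hat{\cL}(\hat{x}) \;=\; H^s_{\phi^u(\hat{x})\,\hat{x}} \cdot \cL(P(\hat{x})).
\]
Since $\phi^u(\hat{x}) \in W^s_{loc}(\hat{x})$, the stable holonomy $H^s_{\phi^u(\hat{x})\hat{x}} \in \GL(d,\R)$ is well defined and varies continuously in $\hat{x}$ by Proposition \ref{prop:holonomies_exist}. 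Its action on $\Lambda^k(\R^d)$ via $\Lambda^k$ sends subspaces of a fixed dimension to subspaces of the same dimension, so $\hat{\cL}$ remains a continuous family of $k$-linear arrangements in the sense of the definition at the start of Section \ref{sec:smoothness}. Because $H^s$ is invertible on $\Gr(k,d)$, the image is neither empty nor all of $\Gr(k,d)$, so $\hat{\cL}$ is non-trivial.

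It remains to check $\hat{A}$-invariance. Using \eqref{eqn:def_reduced_cocycle}, which says
\[
A(\hat{x}) \;=\; H^s_{\hat{\sigma}\hat{x},\,\phi^u(\hat{\sigma}\hat{x})} \circ \hat{A}(\hat{x}) \circ H^s_{\phi^u(\hat{x})\,\hat{x}},
\]
together with the cocycle identity $H^s_{\hat{x}\hat{y}} = (H^s_{\hat{y}\hat{x}})^{-1}$ from Proposition \ref{prop:holonomies_exist}(1), the assumed invariance $A(\hat{x})_* \cL(P\hat{x}) = \cL(P(\hat{\sigma}\hat{x}))$ rearranges to
\[
\hat{A}(\hat{x})\, \bigl( H^s_{\phi^u(\hat{x})\hat{x}}\, \cL(P\hat{x})\bigr) \;=\; H^s_{\phi^u(\hat{\sigma}\hat{x})\,\hat{\sigma}\hat{x}}\, \cL(P(\hat{\sigma}\hat{x})),
\]
which is precisely $\hat{A}(\hat{x})_* \hat{\cL}(\hat{x}) = \hat{\cL}(\hat{\sigma}\hat{x})$. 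This contradicts the strong irreducibility of $\hat{A}$ on $\Gr(k,d)$ in the sense of Definition \ref{def: fullirreducible}.

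The proof is essentially pure bookkeeping rather than substantive analysis; the only point that requires care is confirming that multiplication by a $\GL(d,\R)$ element preserves the structural features of ``continuous family of linear arrangements of fixed dimensions,'' which is immediate because linear sections, their dimensions, and linear arrangements are all defined in terms of linear subspaces of $\Lambda^k(\R^d)$ and the number of components, both of which are preserved by the functorial action of $\GL(d,\R)$ on $\Lambda^k(\R^d)$. I do not foresee any real obstacle.
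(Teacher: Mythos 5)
Your proof is correct and follows the paper's own argument essentially verbatim: define $\hat{\cL}(\hx) = H^s_{\phi^u(\hx)\,\hx}\cL(P\hx)$, verify $\hat A$-invariance by conjugating through the defining relation \eqref{eqn:def_reduced_cocycle}, and note continuity and non-triviality via properties of the stable holonomy. There is no meaningful difference between your argument and the one in the paper.
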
 
\begin{proof} Indeed, if there was such a $\cL$, then using the definition of $\phi^u$ appearing before Equation \eqref{eqn:def_reduced_cocycle} we could define $\hat{\cL}(\hx)$ on $\hat{\Sigma}$ by $ \hat{\cL}(\hx) = H^s_{\phi^u(\hx), \hx} \cL(x)$ so that
$$ \hat{A}(\hx)_* \hat{\cL}(\hx)  = H^s_{\phi^u(\hsigma \hx), \hsigma \hx} A(\hx)_* \cL(x) = H^s_{\phi^u(\hsigma \hx), \hsigma \hx} \cL(\sigma x) = \hat{\cL}(\hsigma \hx),$$
which yields  invariance of $\hat{\cL}$. Moreover, since stable holonomies are continuous in the leaf and transverse direction, continuity of $\hat{\cL}$ follows from continuity of $\cL$.
\end{proof}

As $\hat{A}$ is by definition conjugate to $A$, for the goal of proving simplicity of spectrum it suffices to work with $A$. Hence, from now on we work only with the reduced cocycle $A$.

\begin{proof}[Proof of Proposition \ref{smoothconditionals}] 

We take the continuous disintegration of $m$ from Equation \eqref{lem:cts_disintegration_ustate}. Suppose that $m_x(\cS) > 0$ for some $x \in \Sigma$ and some linear section $\cS$ of $\Gr(k,d)$. Let $d_0$ be the minimal dimension such that there exists a subspace $S$ of $\Lambda^k(\R^d)$ such that $m_x(\P S \cap \Gr(k,d)) > 0 $ for some $x \in \Sigma$. Let
\begin{equation}\label{eqn:def_gamma_0}
\gamma_0=\sup_{x\in \Sigma, \mathbb{P}S: \dim S=d_0}m_x(\P S \cap \Gr(k,d)).
\end{equation}

We claim that by compactness $\gamma_0$ is realized. The space of all $\mathbb{P}S$ is compact and $m_x$ varies continuously in $x$ in the weak* topology by Proposition \ref{prop: weakcont}. Hence if we take any sequence $x_i, S_i$ such that 
\[
m_{x_i}(\mathbb{P}S_i\cap \Gr(k,d))\to \gamma_0,
\]
we may pass to a subsequence so that we may assume that $x_i\to x'$ and $S_i\to S$. Consider the set $B_{\epsilon}(S)$, then weak* convergence implies that for each $\epsilon$ as $B_{\epsilon}(S)$ is closed
\[\begin{aligned}
m_x(\mathbb{P}B_{\epsilon}(S)\cap \Gr(k,d))&\ge \limsup_{i \to \infty} m_{{x_i}}(\mathbb{P}B_{\epsilon}(S_i)\cap \Gr(k,d)) \\
&\ge \limsup_{i \to \infty} m_{x_i}(\mathbb{P}S_i\cap \Gr(k,d))=\gamma_0.
\end{aligned}\]
Letting $\epsilon\to 0$, we obtain that $m_{x'}(\mathbb{P}S\cap \Gr(k,d))=\gamma_0$ by continuity from above of the measures.

We now claim that for each $x \in \Sigma$, there exists at least one subspace $S(x)$ of dimension $d_0$ such that $m_x(S(x) \cap \Gr(k,d)) = \gamma_0$. The proof is along the same lines as in \cite[Lemma 5.2]{avila2007simplicity}. First, one shows that over every cylinder $[J]$ that the supremum of $m_x(S\cap \Gr(k,d))$ is $\gamma_0$. Then one can pick a sequence converging to any point $x'\in \Sigma$ as above. We are able to pass to this limit because the space of linear sections is compact.

We now show that the linear sections realizing the supremum form a continuous family. First, we show that there exists $\ell$ such that at every point $x\in \Sigma$ there exist exactly $\ell$ distinct linear $d_0$-dimensional subspaces of $\Lambda^k(\R^d)$, $S_1(x),\ldots, S_{\ell}(x)$ such that $m_x(\P S_i(x) \cap \Gr(k,d))=\gamma_0$. Take a point $x\in \Sigma$ that maximizes the number $\ell$. Then by invariance of $m_x$, i.e. Proposition \ref{prop: invustates}:
$$m_x = \sum_{z \in \sigma^{-k}x} \frac{1}{J\sigma^k(z)} (A^k(z))^{-1}_* m_z, \text{ with } \sum_{z \in \sigma^{-k}x} \frac{1}{J\sigma^k(z)} = 1,$$
it follows that for each $z\in \sigma^{-k}(x)$ that 
$$m_{z}((A^k(z)^{-1})_*\P S_i(x) \cap \Gr(k,d)) = \gamma_0, \text{ for all }i =  1, ..., n.$$ 

Note that the preimages of $x$ become dense in $\Sigma$. Thus for any point $q\in \Sigma$, we may take a sequence of $z_n$ such that $z_n\to q$ and $m_{z_n}$ gives measure $\gamma_0$ to exactly $\ell$ subspaces $S_1(z_n),\ldots, S_\ell(z_n)$. By the argument mentioned after Equation \eqref{eqn:def_gamma_0}, we may pass to a subsequence so that for each $i$ $S_i(z_n)\to S_i'$, where $S_i'$ is some $d_0$-dimensional subspaces of $\Lambda^k(\R^d)$. We claim that all the $S_i'$ are distinct. But this is clear: if two coincided, then the preceding argument implies that the limiting measure of some $S_i'$ would be at least $2\gamma_0$, which is impossible as $\gamma_0$ is the maximum.

Now we show that these sections vary continuously as $x$ varies. First observe that if $x_n\to x$ then for each $n$ we must have that $S_i(x_n)$ is near some $S_j(x)$---otherwise by taking a limit as before we would obtain an additional section $S'$ of measure $\gamma_0$ for $m_x$.


Finally, to see that the family $\{S_1(x),\ldots, S_\ell(x)\}_{x\in \Sigma}$ is invariant under the cocycle, note that this holds on the set of pre-images of $z$ and that this set is dense. Thus invariance holds by continuity. But we have now reached a contradiction of our assumption of irreducibility on $\Gr(k,d)$, so we are done.
\end{proof}


\subsection{Distribution of Osceledec subspaces}\label{subsec:distribution_of_osceledec_subspaces}


If we know a priori that for $\hmu$ we have $\lambda_{k}(\hmu) > \lambda_{k+1}(\hmu)$ then $u$-states over $\hat{\mu}$ are supported on $E^u_k$. The following proposition makes this precise. 
\begin{proposition} \label{prop: ustateslieondirac}
   Suppose $\hmu$ has full support and continuous local product structure and $\lambda_{k}(\hmu) > \lambda_{k+1}(\hmu)$. Let $\hm$ be any $u$-state of $\hmu$ on $\hSigma \times \Gr(k, d)$. Then for any $x \in \Sigma$ and $\hmu^s_x$-a.e. $\hy \in W^s_{loc}(x)$:
   $$\hm_{\hy} = \delta_{E^u_k(\hy)}.$$
\end{proposition}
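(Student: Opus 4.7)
The plan is to use Proposition \ref{prop: martingale} to realize $\hm_{\hy}$ as the weak-$*$ limit of certain cocycle pushforwards, and then use the Lyapunov gap $\lambda_k(\hmu)>\lambda_{k+1}(\hmu)$ together with the smoothness of the one-sided disintegrations from Proposition \ref{smoothconditionals} to collapse this limit to $\delta_{E^u_k(\hy)}$. Fix a Lyapunov-regular $\hy$ at which Proposition \ref{prop: martingale} and the convergences \eqref{eq: convergenceofsubspaces} both hold; this is a $\hmu$-full measure condition. Abbreviate $B_n\coloneqq A^n(\hat\sigma^{-n}\hy)$ and $x_n\coloneqq P(\hat\sigma^{-n}\hy)$, so that $\hm_{\hy}=\lim_n (B_n)_* m_{x_n}$. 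It suffices to show that every subsequential limit on the right is $\delta_{E^u_k(\hy)}$.

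The first key step is to extract a rank-one quasi-projective limit of $B_n$ on $\Gr(k,d)$. View $B_n$ as a projective map on $\Gr(k,d)\hookrightarrow \P\Lambda^k(\R^d)$ via the Pl\"ucker embedding. The singular values of $\Lambda^k B_n$ are the $k$-fold products of singular values of $B_n$, with maximum $\sigma_1\cdots\sigma_k$, and the ratio of any other such product to the maximum is bounded above by $\sigma_{k+1}(B_n)/\sigma_k(B_n)$. By the Oseledec theorem together with $\lambda_k(\hmu)>\lambda_{k+1}(\hmu)$, this ratio tends to zero exponentially. Passing to a subsequence $n_j$, the normalized operators $\Lambda^k B_{n_j}/\|\Lambda^k B_{n_j}\|$ therefore converge in operator norm to a quasi-projective map $Q$ of rank one on $\Lambda^k(\R^d)$, whose image in $\Gr(k,d)$ is the single point $\lim_j U_k(\hy,n_j)=E^u_k(\hy)$.

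Now pass to a further subsequence so that $x_{n_j}\to x^*\in\Sigma$. By the weak-$*$ continuity of the $u$-state disintegration on the one-sided shift (Proposition \ref{prop: weakcont}), $m_{x_{n_j}}\to m_{x^*}$. The kernel $\ker Q$ is contained in a hyperplane section of $\Gr(k,d)$ by the Avila--Viana fact stated just before Lemma \ref{lem: quasiprojectiveconvergence }, so Proposition \ref{smoothconditionals} gives $m_{x^*}(\ker Q)=0$. Lemma \ref{lem: quasiprojectiveconvergence } then yields $(B_{n_j})_* m_{x_{n_j}}\to Q_* m_{x^*}=\delta_{E^u_k(\hy)}$. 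Since the full martingale limit exists and every subsequence produces the same value, $\hm_{\hy}=\delta_{E^u_k(\hy)}$ for $\hmu$-a.e.~$\hy$; Fubini against the product decomposition of $\hmu$ then converts this into the stable-leaf statement of the proposition.

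The main technical hurdle is the rank-one identification of $Q$ with image equal to $E^u_k(\hy)$: this is the Pl\"ucker-coordinate incarnation of the Lyapunov gap and is the only place where the hypothesis $\lambda_k>\lambda_{k+1}$ is directly used. The remaining ingredients --- martingale convergence, continuity of the $u$-state disintegration on $\Sigma$, the smoothness of $m_x$ on linear sections, and the quasi-projective convergence lemma --- are all imported from the earlier parts of the paper.
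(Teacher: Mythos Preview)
Your argument is essentially the paper's: martingale convergence (Proposition \ref{prop: martingale}), weak-$*$ continuity of $m_x$ (Proposition \ref{prop: weakcont}), compactness of quasi-projective maps together with the Lyapunov gap to identify the image of the limit $Q$ as $\{E^u_k(\hy)\}$, and Proposition \ref{smoothconditionals} plus Lemma \ref{lem: quasiprojectiveconvergence } to conclude. This part is correct.

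There is, however, a gap in your last sentence. Fubini against the product structure only yields the conclusion for $\mu$-a.e.\ $x\in\Sigma$, whereas the proposition asserts it for \emph{every} $x\in\Sigma$. The paper closes this gap with an extra holonomy step: fix one $x_0$ in the cylinder $[0;i]$ for which the leafwise statement holds; for any other $x$ in $[0;i]$ and $\hy\in W^s_{loc}(x_0)$, set $\hz=W^u_{loc}(\hy)\cap W^s_{loc}(x)$. The $u$-state property gives $\hm_{\hz}=(H^u_{\hy\hz})_*\hm_{\hy}=\delta_{H^u_{\hy\hz}E^u_k(\hy)}$, and Lemma \ref{lem:unstable_bundles_cts_along_unstable_leaves} identifies $H^u_{\hy\hz}E^u_k(\hy)=E^u_k(\hz)$. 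Since the conditional measures $\hmu^s_{x_0}$ and $\hmu^s_x$ are equivalent under unstable holonomy (continuous product structure), this transports the $\hmu^s_{x_0}$-a.e.\ statement to a $\hmu^s_x$-a.e.\ statement for the arbitrary $x$. You should add this step.
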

\begin{proof}
    Recall that by Proposition \ref{prop: martingale}, for $\hmu$-a.e.\ $\hx \in \hSigma:$
    $$\hm_{\hx} = \lim_{n \to \infty} A^n(\hsigma^{-n}(\hx))_* m_{P(\hsigma^{-n}(\hx))},$$
    where $P\colon \hSigma \to \Sigma$ is the projection from the two-sided to the one-sided shift.

    Passing to a subsequence $n_j$ such that $\hsigma^{-n_j}(\hx) \to \hx_0$, since $x \mapsto m_x$ is weak$*$ continuous:
    $$\lim_{j \to \infty}m_{P(\hsigma^{-n_j}(\hx))}= m_{x_0},$$
    where, as usual, $x_0 = P(\hx_0)$. Moreover, by passing to a further subsequence if needed, since the space of quasi-projective maps is compact we may assume that $A^{n_j}(\hsigma^{-n_j}(\hx))$ converges to some quasi-projective map $Q_0$. By Oseledec' theorem since $\lambda_{k}(\hmu) > \lambda_{k+1}(\hmu)$, the sequence of maps $A^n(\hsigma^{-n}(\hx))$ has $$
    \frac{\sigma_k(A^n(\hsigma^{-n}(\hx)))}{\sigma_{k+1}(A^n(\hsigma^{-n}(\hx)))} \to \infty$$ for $\hmu$-a.e.\ $\hx$, so the the image of $Q_0$ in $\Gr(k,d)$ must be the single point in $\Gr(k,d)$ given by 
    $$\lim_{n\to \infty} U_k(\hsigma^{-n}(\hx), n) = E^u_k(\hx),$$
    again by Oseledec' theorem. Finally, since by Proposition \ref{smoothconditionals} $m_{x_0}(\ker Q_0) = 0$, we apply Lemma \ref{lem: quasiprojectiveconvergence } to conclude that
    $$\hm_{\hx} =\lim_{j \to \infty} A^n(\hsigma^{-n_j}(\hx))_* m_{P(\hsigma^{-n_j}(\hx))} = {Q_0}_* m_{x_0} = \delta_{E^u_k(\hx)}$$
    holds for $\hmu$-a.e.\ $\hx$. In particular, by the disintegration formula, for $\mu$-a.e.\ $x \in \Sigma$ the statement must hold for $\hmu^s_x$-a.e.\ $\hy \in  W^s_{loc}(x)$.

    Now we want to upgrade this statement to \textit{every} $x \in \Sigma$. Fix some $x_0\in \Sigma$ for which the statement holds for $\hmu^s_{x_0}$-a.e.\ $\hy \in  W^s_{loc}(x_0)$ and fix an arbitrary $x \in \Sigma$ which lies in the same $[0:i]$ cylinder. Because $\hat{m}$ is a $u$-state, for $\hy \in  W^s_{loc}(x_0)$ and $\hz = W^u_{loc}(\hy) \cap W^s_{loc}(x)$, 
    \[
    H^u_{\hy \hz}\hm_{\hy} = \hm_{\hz},
    \]
    so that $\hm_{\hz} = \delta_{H^uE^u_k(\hy)}$ for $\hmu^s_{x_0}$-a.e.\ $\hy \in  W^s_{loc}(x_0)$, since the measures $\hmu^s_{x_0}$ and $\hmu^s_{x}$ are equivalent due to the continuous product structure. But since $H^u_{\hy \hz} E^u_k(\hy) = E^u(\hz)$ by Lemma \ref{lem:unstable_bundles_cts_along_unstable_leaves} and there are only finitely many cylinders $[0:i]$, the proof is complete. 
\end{proof}

\begin{remark} It is natural to consider if the correct definition of strong irreducibility for cocycles admitting invariant holonomies should be weakened by requiring non-existence of a continuous family $\cL$ which is in addition invariant by the holonomies. However, in \cite[Remark 3.11]{stoyanovgouezel} the authors show that with this weaker definition, which they call holonomy-irreducibility, the conditional measures of $u$-states do not have to lie on a single Dirac mass, so  Proposition \ref{prop: ustateslieondirac} does not hold. Their example suggests that in the study of cocycles admitting canonical holonomies it is more natural to not include invariance by holonomies in the definition of irreducibility.
\end{remark}

\begin{remark} \label{rem: unreduced} Since the conjugacy between the unreduced and reduced cocycles preserves the unstable directions and conjugates the stable holonomies to the identity, if we consider instead the unreduced cocycle $\hA$, the statement of Proposition \ref{smoothconditionals} may still be interpreted as saying that given any hyperplane section $H$ of $\Gr(k,d)$, any $\hx \in \hSigma$ then for $\hmu^s_x$-a.e.~point $\hy$ in $W^s_{loc}(\hx)$ the conditional measure $\hm_{\hy}$ gives $0$ measure to $H^u_{\hx \hy}H$. In particular if $\hmu$ has $\lambda_k(\hmu) > \lambda_{k+1}(\hmu)$, we have $\hm_{\hy} = \delta_{E^u_k(\hy)}$ for $\hmu$-a.e. $\hy$ by the preceeding proposition, so that for every $\hx \in \hSigma$ and $\hmu^s_{\hx}$-a.e.~point $\hy$ in $W^s_{loc}(\hx)$ one must have $E^u_k(\hy) \notin H^s_{\hx \hy}H$ for the unreduced cocycle. 

Similarly, the analogous statement holds for the stable directions along an unstable manifold, that is, for a fixed hyperplane section $H \subseteq \Gr(d-k, d)$ and a fixed $\hx \in \hSigma$, for $\hmu^u_{\hx}$-a.e. point $\hy$ in $W^u_{loc}(\hx)$ it holds that $E^s_{d-k}(\hy) \notin H^u_{\hx \hy}H$. 
\end{remark}

\section{Convergence to a Dirac measure} \label{sec: dirac}

Now we turn to the proof of Theorem \ref{theorem: maintheorem}. 
Throughout, we fix $1 \leq k \leq d-1$ and $\hmu$ and $\heta$ as in the statement of Theorem \ref{theorem: maintheorem}. The key step to obtain $\lambda_k(\heta) > \lambda_{k+1}(\heta)$ is to prove that for $u$-states of $\hat{\eta}$ the conditional measures on $\Gr(k,d)$ are Dirac measures. 

\begin{proposition} \label{prop: convergencetodirac} Let $\hm^\eta$ be an invariant $u$-state for $\heta$ on $\hSigma \times \Gr(k, d)$. Then for $\heta$-a.e. $\hx$, the conditional measure $\hm^{\eta}_{\hx}$ is a Dirac delta on $\Gr(k, d)$. That is, there exists some $\xi^u(\hx) \in \Gr(k, d)$ such that
$$\hm^{\eta}_{\hx} = \delta_{\xi^u(\hx)}.$$
\end{proposition}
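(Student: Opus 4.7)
The plan is to express $\hm^\eta_{\hx}$ as the pushforward by a quasi-projective map of a measure to which Proposition \ref{smoothconditionals} applies, and to show that this quasi-projective map has rank-one image on $\Gr(k,d)$, forcing $\hm^\eta_{\hx}$ to be a Dirac. The key trick that lets the simple spectrum of $\hmu$ be used to study $\heta$ is the unstable-holonomy intertwining identity, which transfers Lyapunov expansion data from a $\hmu$-Lyapunov regular point $\hz$ on $W^u_{loc}(\hx)$ back to $\hx$.

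I would first fix $\hx$ in a $\heta$-full measure set on which Proposition \ref{prop: martingale} yields
\[
\hm^\eta_{\hx} = \lim_{n\to\infty} A^n(\hsigma^{-n}\hx)_*\, m^\eta_{P(\hsigma^{-n}\hx)}.
\]
Because $\hmu$ has full support and continuous local product structure, the conditional measure $\hmu^u_{\hx}$ is supported on all of $W^u_{loc}(\hx)$ and gives full measure to the set of $\hmu$-Lyapunov regular points, so I may choose such a $\hz \in W^u_{loc}(\hx)$. Passing to a subsequence $n_j$, I arrange that $\hsigma^{-n_j}\hx \to \hx_0$, $\hsigma^{-n_j}\hz \to \hz_0 \in W^u_{loc}(\hx_0)$, and that $A^{n_j}(\hsigma^{-n_j}\hx) \to Q_0$ and $A^{n_j}(\hsigma^{-n_j}\hz) \to Q_0^{\hz}$ in the space of quasi-projective maps on $\Gr(k,d)$.

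Iterating the unstable intertwining identity from Proposition \ref{prop:holonomies_exist}, which is valid because $\hsigma^{-n}\hz \in W^u_{loc}(\hsigma^{-n}\hx)$ for every $n \ge 0$, one obtains
\[
A^n(\hsigma^{-n}\hx) = H^u_{\hz,\hx} \circ A^n(\hsigma^{-n}\hz) \circ H^u_{\hsigma^{-n}\hx,\, \hsigma^{-n}\hz},
\]
and H\"older continuity of the holonomies lets me pass to the limit, giving $Q_0 = H^u_{\hz,\hx} \circ Q_0^{\hz} \circ H^u_{\hx_0, \hz_0}$ as quasi-projective maps. Since $\hz$ is $\hmu$-Lyapunov regular and $\lambda_k(\hmu) > \lambda_{k+1}(\hmu)$, the ratio $\sigma_k(A^n(\hsigma^{-n}\hz))/\sigma_{k+1}(A^n(\hsigma^{-n}\hz))$ grows exponentially and $U_k(\hz,n) \to E^u_k(\hz)$ by \eqref{eq: convergenceofsubspaces}, so the image of $Q_0^{\hz}$ in $\Gr(k,d)$ is the single point $E^u_k(\hz)$. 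Composing with the invertible holonomy factors and applying Lemma \ref{lem:unstable_bundles_cts_along_unstable_leaves}, the image of $Q_0$ on $\Gr(k,d)$ is the single point $H^u_{\hz,\hx}E^u_k(\hz) = E^u_k(\hx)$, an equality which simultaneously defines $E^u_k(\hx)$ intrinsically at the possibly non-$\hmu$-regular $\hx$.

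To conclude, the disintegration $\{m^\eta_x\}$ varies weak-$*$ continuously by Proposition \ref{prop: weakcont}, so $m^\eta_{P(\hsigma^{-n_j}\hx)} \to m^\eta_{x_0}$ for $x_0 = P(\hx_0)$. The kernel of $Q_0$ sits inside a hyperplane section of $\Gr(k,d)$, which Proposition \ref{smoothconditionals} forces to have $m^\eta_{x_0}$-mass zero. Lemma \ref{lem: quasiprojectiveconvergence } then yields $\hm^\eta_{\hx} = (Q_0)_* m^\eta_{x_0} = \delta_{E^u_k(\hx)}$, completing the proof with $\xi^u(\hx) := E^u_k(\hx)$. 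The main technical obstacle is that $\hx$ is $\heta$-generic rather than $\hmu$-generic, so the $\hmu$-Lyapunov expansion data must be accessed through auxiliary points on the same local unstable leaf and transported via $H^u$; this is made possible precisely by the continuous product structure of $\hmu$ together with the H\"older continuity of fiber-bunched holonomies.
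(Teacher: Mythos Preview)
Your argument has a genuine gap at the step where you pick a $\hmu$-Lyapunov regular point $\hz \in W^u_{loc}(\hx)$. You assert that the conditional measure $\hmu^u_{\hx}$ gives full mass to $\hmu$-regular points, but this is only guaranteed for $\hmu^s$-a.e.\ choice of past, not for every past. The point $\hx$ is $\heta$-typical, and since $\hmu$ and $\heta$ are in general mutually singular (think of two distinct Bernoulli measures on a full shift), their one-sided marginals $\hmu^s$ and $\heta^s$ are likewise singular. Thus the set of pasts for which $W^u_{loc}(\hx)$ contains a $\hmu$-regular point, while of full $\hmu^s$-measure, may well have $\heta^s$-measure zero. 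Put differently: the $W^u_{loc}$-saturated, $\hsigma$-invariant set $R'=\{\hx: E^u_k(\hx) \text{ exists in the sense of }\eqref{eq: convergenceofsubspaces}\}$ has $\hmu(R')=1$, but there is no a priori reason $\heta(R')=1$; in fact this is essentially what the proposition is trying to establish. Your intertwining identity $A^n(\hsigma^{-n}\hx)=H^u_{\hz,\hx}A^n(\hsigma^{-n}\hz)H^u_{\hsigma^{-n}\hx,\hsigma^{-n}\hz}$ is correct and would indeed finish the proof \emph{if} such a $\hz$ existed, but its existence is exactly the missing ingredient.

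The paper circumvents this obstruction by never trying to land on a $\hmu$-regular leaf. Instead it produces, via Kalinin's periodic approximation, a periodic point $\hp$ with $\lambda_k(\hp)>\lambda_{k+1}(\hp)$, and then builds a finite family of ``twisting'' rectangles near $\hp$ (Proposition~\ref{prop:twisting_sets}) so that any orbit passing through one of them, after shadowing $\hp$ for time $l$, emerges with its mass $\varepsilon$-concentrated and displaced away from the relevant contracted subspace. The crucial point is that these rectangles are \emph{open} sets, so $\heta$-typical orbits (for any fully supported $\heta$) visit them infinitely often; this is established by the density argument of Proposition~\ref{prop:l_good_full_measure}. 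The elaborate machinery of $l$-good points, Lemma~\ref{lem:measure_estimate}, and Lemma~\ref{lem: equicontinuity} is there precisely to convert recurrence to open sets---which $\heta$ does see---into the singular-value growth and concentration that your shortcut tries to obtain directly from a $\hmu$-regular point on the leaf.
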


We give the proof of Proposition \ref{prop: convergencetodirac} at the end of this section. The existing approaches showing that cocycles have non-trivial Lyapunov exponents all rely on some form of ``twisting" and ``pinching". The approach we take to show the convergence to Dirac measures is similar to preexisting ones: we assume a form of pinching and twisting. Our pinching assumption is the existence some other ergodic fully supported measure $\hmu$ with continuous local product structure and $\lambda_{k}(\hmu) > \lambda_{k+1}(\hmu)$, as in Theorem \ref{theorem: maintheorem}. To construct a twisting setup in our setting we use a novel approach using both strong irreducibility (Section \ref{sec:smoothness}) combined with the behavior of generic orbits for $\hmu$. We then conclude the proof in Section \ref{sec: proof}.

To begin the construction we fix a periodic point in $\hSigma$ with an exponent gap which we will use to separate the exponents of generic orbits via shadowing. Since $\lambda_{k}(\hat{\mu}) >\lambda_{k+1}(\hmu)$ by the periodic approximation theorem \cite[Theorem 1.4]{kalinin2011livsic} there exists some periodic point $\hat{p} \in \hSigma$, such that the Lyapunov spectrum of $A$ over the orbit of $\hat{p}$ is arbitrarily close to the Lyapunov spectrum of $A$ with respect to $\hat{\mu}$. Hence we may take $\hp$ so that the $k$-th eigenvalue of $A^q(\hp)$, where $q$ is the period if $\hp$, is greater than the $(k+1)$-th eigenvalue. Without loss of generality we will assume that $\hp$ is a fixed point, i.e. $q=1$ and $\hp$ will be fixed for the rest of this section.

For the twisting construction, we will adopt the following notation. For $\hx \in \hSigma$ we define the unstable and stable cylinders $C^{u,s}_{l}(\hx)$ and the rectangle $R_l(\hat{x})$ of depth $l$ at $\hx$, which is the ball of radius $2^{-l}$ at $\hat{x}$, by:
$$\begin{aligned}
    &C^s_{l}(\hx) := \{\hx  = (\dots, y_{-1}, y_0, y_1, \dots)\in \hSigma: y_i = x_i, \,\forall \, 0 \leq i \leq l\},\\
    &C^u_{l}(\hx) := \{\hx  = (\dots, y_{-1}, y_0, y_1, \dots)\in \hSigma: y_i = x_i, \,\forall \, -l \leq i \leq 0\},\\
    &R_l(\hat{x}) := C^s_{l}(\hx) \cap C^u_{l}(\hx).
\end{aligned}$$

Observe that if $\hx \in [0:i]$, then $R_0(\hx) =[0:i]$ by definition. Since $k$ is fixed we will simply take $\Gr$, $E^u$ and $E^s$ to mean $\Gr(k, d)$, $E^u_{k}$, $E^s_{d-k}$, respectively. We will also use the following simple linear algebra lemma in the proof, where the angle $\angle (V,W)$ between two subspaces $V,W$ not necessarily of the same dimension is defined simply as the smallest angle between a vector in $V$ and a vector in $W$: 

\begin{lemma} \label{lem: conelemma}
    Fix $d \geq 1$. Given $\theta, \gamma > 0$ there is a $K> 1$ with the following property. Let $A \in \GL(d,\R)$ satisfy $\sigma_k(A) > K \sigma_{k+1}(A)$. Then for any $U$ of   dimension $k$ such that $\angle( U,S_{d-k}(A))> \theta$, one has:
    $$d_{\Gr}(A(U), U_k(A)) < \theta.$$
    Moreover, $A$ acts $\gamma$-Lipschitz in the set of $U$ of   such that $\angle( U,S_{d-k}(A))> \theta$ in the Grassmannian.
\end{lemma}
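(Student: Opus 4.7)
The plan is to diagonalize $A$ via the singular value decomposition and then work in graph coordinates on the Grassmannian. Write $A = UDV^T$ in SVD form, where $D = \mathrm{diag}(\sigma_1,\ldots,\sigma_d)$ and $U, V \in O(d)$. Set $W^0 := \mathrm{span}(e_1,\ldots,e_k)$ and $V^0 := \mathrm{span}(e_{k+1},\ldots,e_d)$, so that $U_k(A) = U(W^0)$ and $S_{d-k}(A) = V(V^0)$. Because orthogonal transformations act isometrically on $\Gr(k,d)$ (with the distance given in the paper), the problem reduces to showing the analogous two assertions for $D$: first, that $D$ sends any $k$-plane making angle at least $\theta$ with $V^0$ into a $\theta$-neighborhood of $W^0$; second, that $D$ is a $\gamma$-Lipschitz contraction on that set of planes.

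The first step is to parameterize. Any $k$-plane $W$ with $\angle(W, V^0) \geq \theta$ can be written uniquely as the graph of a linear map $f \colon W^0 \to V^0$ with $\|f\| \leq \cot(\theta)$: for $v = v_1 + v_2 \in W$ with $v_1 \in W^0$ and $v_2 \in V^0$, the angle condition forces $\|v_2\| \leq \cos(\theta)\|v\|$, which rearranges to $\|v_2\| \leq \cot(\theta)\|v_1\|$. In these coordinates $D$ sends the graph of $f$ to the graph of $D_2 f D_1^{-1}$, where $D_1 := \mathrm{diag}(\sigma_1,\ldots,\sigma_k)$ and $D_2 := \mathrm{diag}(\sigma_{k+1},\ldots,\sigma_d)$. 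The new slope has norm at most $(\sigma_{k+1}/\sigma_k)\cot(\theta) < \cot(\theta)/K$. Since the definition of $d_{\Gr}$ directly bounds the distance from the graph of a map $g$ to $W^0$ by $\|g\|$, we obtain $d_{\Gr}(A(U), U_k(A)) \leq \cot(\theta)/K$, which is less than $\theta$ as soon as $K > \cot(\theta)/\theta$.

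For the Lipschitz bound, write $V^T U_1, V^T U_2$ as graphs of $f_1, f_2$ with $\|f_i\| \leq \cot(\theta)$. A standard compactness argument gives constants $c(\theta), C(\theta) > 0$, depending only on $\theta$ and the ambient dimension, such that on the region $\{\|f\| \leq \cot(\theta)\}$ the graph chart is bi-Lipschitz:
$$c(\theta)\,\|f_1 - f_2\| \leq d_{\Gr}(U_1, U_2) \leq C(\theta)\,\|f_1 - f_2\|.$$
Applying the same comparison after pushing forward by $D$, whose action on slopes has operator norm at most $\sigma_{k+1}/\sigma_k$, yields
$$d_{\Gr}(A(U_1), A(U_2)) \leq \frac{C(\theta)}{c(\theta)} \cdot \frac{\sigma_{k+1}}{\sigma_k} \cdot d_{\Gr}(U_1, U_2),$$
so that taking $K > C(\theta)/(c(\theta)\gamma)$ gives the $\gamma$-Lipschitz estimate. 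Choosing $K$ to exceed both thresholds completes the proof.

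The only nontrivial point is the bi-Lipschitz equivalence between graph slope and Grassmannian distance on the bounded region $\{\|f\| \leq \cot(\theta)\}$; this is a routine local chart estimate on $\Gr(k,d)$ that is uniform on the compact set, so I would simply invoke it without calculation. Everything else is elementary SVD bookkeeping.
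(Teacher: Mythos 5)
Your proposal is correct and takes essentially the same route as the paper: both reduce to the diagonal case via the singular value decomposition and then estimate directly. The paper's proof is a one-line sketch ("reduces to the analogous claim for diagonal matrices, which is straightforward because every unit vector... has $U_k(A)$ component uniformly bounded below"), whereas you spell out the same idea in graph coordinates, tracking the slope map $f \mapsto D_2 f D_1^{-1}$ explicitly. Your bookkeeping is sound: the bound $\|f\| \le \cot(\theta)$ for planes making angle $> \theta$ with $V^0$ is correct, the pushed-forward slope has norm at most $(\sigma_{k+1}/\sigma_k)\cot(\theta)$, and since the image slopes stay in the same compact region $\{\|g\| \le \cot\theta\}$ the bi-Lipschitz chart comparison you invoke applies uniformly on both ends, giving the $\gamma$-Lipschitz bound with $K$ large. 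The one thing you wave at rather than prove, the bi-Lipschitz equivalence between slope norm and Grassmannian distance on the compact chart, is indeed standard and is also left implicit in the paper.
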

\begin{proof}
    By the singular value decomposition, this reduces to the analogous claim for diagonal matrices, which is straightforward because every unit vector $Av$ uniformly transverse to $S_{d-k}(A)$, $Av$ has $U_k(A)$ component uniformly bounded below.
\end{proof} 

In the following sequence of lemmas, we construct a sequence of rectangles  such that points in the rectangles shadow $\hp$ under $\hsigma^{-1}$ and such that under $\hsigma$ the cocycle over the points move the direction expanded by shadowing $\hp$ in the past away from fixed hyperplane sections, which will serve as ``twisting" orbits. We start with the construction of one of these rectangles in Lemma \ref{lem:good_rectangles} and conclude the construction in Proposition \ref{prop:twisting_sets}.

From now on for every point $\hx \in \hSigma$, we will use superscripts\footnote{Recall that subscripts already denote the $i$-th symbol of a word.} to denote preimages under $\hsigma$, e.g. $\hx^n = \hsigma^{-n}(\hx).$ Morever, observe in the following proofs that because we have taken the cocycle to be constant along local stable manifolds that  $A^n(\hat{y})=A^n(\hat{z})$ for $\hz \in W^s_{loc}(\hy)$.

\begin{lemma}\label{lem:good_rectangles}
Fix some sufficiently small $\theta_0>0$ and let $\hat{p}\in \hat{\Sigma}$. Suppose that $\hat{q}$ is Osceledec regular and generic for $\hat{\mu}$. Then for all $\epsilon>0$ and $l\in \N$, there exist $\rho$ and $k$ such that 
\begin{enumerate}
    \item
    $\hat{\sigma}^{-k}(R_{\rho}(\hat{q}))\subseteq C^u_l(\hat{p})$
    \item
    For any $\hat{y}\in \hat{\sigma}^{-k}(R_{\rho}(\hat{q}))$ which is Oseledec regular, let $\hat{z}=[\hat{y},\hat{p}]$. Then
    \[
    d_{Gr(k,d)}(A^k(\hat{y})E^u(\hat{z}),E^u(\hat{q}))<\epsilon.
    \]
    \item
    Further, for $\hy$ and $\hz$ as above, $A^k(\hat{y})\colon \Gr(k,d)\to \Gr(k,d)$ is $\gamma$-Lipschitz on a $\theta_0$-neighborhood of $E^u(\hat{z})$.
\end{enumerate}
\end{lemma}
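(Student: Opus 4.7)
My plan splits into three parts. Part (1) is a shift-combinatorial statement. Since $\hat{\mu}$ has full support, it assigns positive measure to the cylinder of length $l+1$ consisting only of the symbol $\hat{p}_0$. Because $\hat{q}$ is generic for $\hat{\mu}$, Birkhoff's theorem applied to $\hat{\sigma}^{-1}$ produces infinitely many $k$ with $\hat{q}_j=\hat{p}_0$ for every $j\in[-k-l,-k]$. For any such $k$ and any $\rho\geq k+l$, directly unpacking the definitions shows $\hat{\sigma}^{-k}(R_\rho(\hat{q}))\subseteq C^u_l(\hat{p})$, proving (1). We retain freedom to take both $k$ and $\rho$ much larger, which will be essential for (2) and (3).

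For (2) and (3), the idea is to compare $A^k(\hat{y})$ to $A^k(\hat{\sigma}^{-k}(\hat{q}))$ by using that the reduced cocycle is constant along local stable leaves together with the commutation of $A$ with unstable holonomies. Since $\hat{y}_0=\hat{p}_0=\hat{q}_{-k}$, the bracket $\tilde{y}:=[\hat{y},\hat{\sigma}^{-k}(\hat{q})]$ is well defined and lies in $W^u_{loc}(\hat{y})\cap W^s_{loc}(\hat{\sigma}^{-k}(\hat{q}))$. Constancy of the reduced cocycle gives $A^k(\tilde{y})=A^k(\hat{\sigma}^{-k}(\hat{q}))$, and the commutation relation yields
\[
A^k(\hat{y})E^u(\hat{z})=H^u_{\hat{\sigma}^k(\tilde{y}),\hat{\sigma}^k(\hat{y})}\,A^k(\hat{\sigma}^{-k}(\hat{q}))\,H^u_{\hat{y},\tilde{y}}E^u(\hat{z}).
\]
A direct calculation on symbols gives $d(\hat{y},\tilde{y})\leq 2^{-(k+\rho)}$ and $d(\hat{\sigma}^k(\hat{y}),\hat{\sigma}^k(\tilde{y}))\leq 2^{-\rho}$, so for $\rho\gg k$ both holonomies appearing in the display are uniformly close to the identity. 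Thus everything reduces to studying $A^k(\hat{\sigma}^{-k}(\hat{q}))$ applied to a subspace $W$ essentially equal to $E^u(\hat{z})$.

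By Osceledec regularity of $\hat{q}$ and the gap $\lambda_k(\hat{\mu})>\lambda_{k+1}(\hat{\mu})$, the singular-value ratio $\sigma_k/\sigma_{k+1}$ of $A^k(\hat{\sigma}^{-k}(\hat{q}))$ grows exponentially in $k$, while $U_k(\hat{q},k)\to E^u(\hat{q})$ and $S_{d-k}(A^k(\hat{\sigma}^{-k}(\hat{q})))$ approaches the stable Osceledec direction at the initial point. Applying Lemma \ref{lem: conelemma} with angle $\theta_0$ shows that any $W$ making angle at least $\theta_0$ with $S_{d-k}(A^k(\hat{\sigma}^{-k}(\hat{q})))$ is mapped within $\epsilon$ of $E^u(\hat{q})$ for $k$ large, yielding (2); item (3) follows from the same lemma applied directly to $A^k(\hat{y})$, whose singular gap and stable direction are inherited from those of $A^k(\hat{\sigma}^{-k}(\hat{q}))$ via the near-identity holonomies above. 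The main obstacle is establishing the uniform transversality of $W$ with $S_{d-k}(A^k(\hat{\sigma}^{-k}(\hat{q})))$ over every Osceledec regular $\hat{y}$ in the rectangle: Osceledec subspaces are only measurable transversely to unstable leaves. The plan to resolve this is to work on a Pesin/Lusin-type set $K$ of almost full $\hat{\mu}$-measure on which $E^u$ and $E^s$ vary continuously and where the Osceledec convergence rates are uniform, use the generic orbit of $\hat{q}$ and Birkhoff to ensure $\hat{\sigma}^{-k}(\hat{q})\in K$ for the chosen $k$, and then propagate transversality throughout the rectangle via Lemma \ref{lem:unstable_bundles_cts_along_unstable_leaves}, which provides continuity of $E^u$ along the local unstable leaf of $\hat{y}$.
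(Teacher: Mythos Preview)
Your approach to parts (2) and (3) contains a genuine gap at the point you yourself flag as the main obstacle: the transversality of $W\approx E^u(\hat z)$ with $S_{d-k}(A^k(\hat\sigma^{-k}(\hat q)))$. A Pesin--Lusin set cannot supply this. On such a set you get uniform Osceledec convergence and continuity of $E^u$, $E^s$, together with a uniform lower bound on the angle $\angle(E^u(\hat y),E^s(\hat y))$ \emph{at the same point}. But $\hat z=[\hat y,\hat p]$ agrees with $\hat y$ only on nonpositive coordinates; on positive coordinates $\hat z$ follows $\hat p$ while $\hat y$ follows $\hat q$, so $\hat z$ is not close to $\hat y$ (or to $\hat\sigma^{-k}(\hat q)$), and the holonomy $H^u_{\hat y\hat z}$ carrying $E^u(\hat y)$ to $E^u(\hat z)$ is not close to the identity. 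There is simply no reason coming from Osceledec theory alone that $E^u(\hat z)$ should be transverse to $E^s(\hat\sigma^{-k}(\hat q))$; Lemma \ref{lem:unstable_bundles_cts_along_unstable_leaves} gives holonomy equivariance, not smallness of the displacement. Your plan to ``propagate transversality throughout the rectangle'' presupposes transversality at some base point which you never establish.

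The paper obtains this transversality from the \emph{strong irreducibility} hypothesis, which your proposal never invokes. Via Remark~\ref{rem: unreduced} (the $E^s$ version of Proposition~\ref{smoothconditionals}), for $\hat\mu^u_{\hat p}$--a.e.\ $\hat y\in W^u_{loc}(\hat p)$ the stable subspace $E^s(\hat y)$ avoids the hyperplane section in $\Gr(d-k,d)$ determined by $E^u(\hat p)$; since for such $\hat y$ one has $\hat z=\hat p$, this is exactly $\angle(E^u(\hat z),E^s(\hat y))>2\theta_0$. Combining this with Egorov for the convergence $S_{d-k}(\hat y,n)\to E^s(\hat y)$ yields a positive-$\hat\mu$-measure, $W^s_{loc}$-saturated set $G_l\subseteq C^u_{2l}(\hat p)$ on which both transversality and uniform convergence hold. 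Genericity of $\hat q$ is then used to hit $G_l$, not merely the cylinder of $\hat p$'s symbols as in your part (1). Once $\hat q^{k}\in G_l$, Lemma~\ref{lem: conelemma} gives (2) and (3) at $\hat y=\hat q^{k}$, and the extension to all $\hat y$ in $\hat\sigma^{-k}(R_\rho(\hat q))$ follows from continuity of $\hat y\mapsto(A^k(\hat y),E^u([\hat y,\hat p]))$ by enlarging $\rho$, without the holonomy factorization you set up.
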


\begin{proof}
We begin with the following claim.
\begin{claim} \label{claim: proof of rectangles}
    For every $\theta_0$ sufficiently small and any $l > 0$, there exists a measurable set $G_l\subseteq C^u_{2l}(\hat{p})$ with the following properties:
    \begin{enumerate}
        \item[(1')]
        $G_l$ has positive $\hmu$ measure.
        \item[(2')]
        For $\hat{y}\in G_l$ which is Oseledec regular,
        \[
        \angle (E^u(\hz), E^s(\hy))> 2 \theta_0, \text{ where }\hz =[\hat{y},\hat{p}].
        \]
        \item[(3')]
        $G_l$ is saturated by $W^s_{loc}$-leaves in $C^u_{2l}(\hat{p})$. That is, for $\hx \in G_l$ if $\hz \in W^s_{loc}(\hx) \cap C^u_{2l}(\hp)$, then $\hz \in G_l$. 
        
        \item[(4')]For $\hy \in G_l$ which is Oseledec regular the sequence of subspaces $(S(\hy, n))_n$ defined in Subsection \ref{ssec: lyapunov exponents} converges uniformly to $E^s(\hy)$.
    \end{enumerate}
\end{claim}
\begin{proof}
    Let $G_l$ be the set of points in $C^u_{2l}(\hp)$ which are Oseledec regular and satisfy condition (2'). For $y \in G_l$ let $\hz = [\hy, \hp]$.  Since $E^u(\hz) = H^u_{p, z}E^u(\hp)$, by Remark \ref{rem: unreduced} applied for $E^s$ and $H$ the hyperplane section defined by $E^u(\hp)$ on $\Gr(d-k,d)$, we see that for all $\theta_0$ sufficiently small $G_l \cap W^u_{loc}(\hp)$ has positive $\hmu^u_{\hp}$-measure. Observe that how much we need to shrink $\theta_0$ does not depend on $l$. Because $E^s$ is constant along $W^s_{loc}$-leaves, $G_l$ is saturated by $W^s_{loc}$-leaves in $C^u_{2l}(\hp)$, giving (3').  Moreover, since $\hmu$ has a local product structure $G_l$ is a set of positive $\hmu$-measure, so we verified (1'). 
    
    By restricting $G_l$ to a subset of positive $\hmu$-measure we may assume by Egorov's theorem that for $\hy \in G_l$ the sequence of subspaces $(S(\hy, n))_n$ converges uniformly to $E^s(\hy)$, verifying (4'). Observe that since the cocycle is constant along local stable leaves, passing to this subset of positive measure does not affect the saturation by $W^s_{loc}$-leaves in $C^u_{2l}(\hp)$, i.e., passing to this subset (3') still holds, so we are done. 
\end{proof}

Now we return to the proof of Lemma \ref{lem:good_rectangles}. Because $\hat{q}$ is generic for $\hmu$ and by (1') the set $G_l$ has positive measure there is a sequence of times $k_n \to \infty$ such that $\hq^{k_n} \in G_l$. Observe that for a given time $k_n$, we can always take $\rho$ to be sufficiently large so that (1) holds, since $G_l \subseteq C_{2l}^u(\hp)$. We will now determine how large $k_n$ has to be for (2) and (3) to hold, and, 
thus increasing $\rho$ accordingly if necessary at the end, we can do so ensuring that (1) holds as well.

First, we will show that for $k_n$ sufficiently large, statements of (2) and (3) hold for $\hy = \hq^{k_n}$ and $\hz = [\hq^{k_n}, \hp]$ exactly. In order to do this, we use the hyperbolicity of $\hq$ and Lemma \ref{lem: conelemma}. Then to conclude the proof we will later use continuity to show that (2) and (3) hold for any $\hy \in \hsigma^{-k_n}(R_\rho(\hq))$.

Applying Lemma \ref{lem: conelemma} with $\theta = \min\{\theta_0, \epsilon/2\}$ and $\gamma/2$ for the Lipschitz constant as in the statement of the lemma we may obtain a constant $K$ such that the conclusion of that lemma holds if $\sigma_k(A)>K\sigma_{k+1}(A)$ for a linear map $A\in \SL(d,\R)$. Since $\hq$ is generic for $\hmu$ and $\lambda_k(\hmu) > \lambda_{k+1} (\hmu)$, for the $K$ specified above if we choose $k_n$ large enough, then $\sigma_k(A^{k_n}(\hat{q}^{k_n}))>K\sigma_{k+1}(A^{k_n}(\hat{q}^{k_n}))$. Hence for any subspace $U$ with $\angle(U,S_{d-k}(\hat{q}^{k_n}, k_n)) > \theta_0$, we have that 
\begin{equation}\label{eqn:output_closeness}
d_{\Gr}(A^{k_n}(\hat{q}^{k_n})(U),U_k(\hat{q}^{k_n}, k_n))<\epsilon/2
\end{equation}
and $A^{k_n}$ is $\gamma/2$-Lipschitz on the same set of $U$. 

Since $\hq^{k_n} \in G_l$, by (2') in Claim \ref{claim: proof of rectangles} we have $\angle(E^u([\hq^k_n, \hp]), E^s(\hq^{k_n})) > 2 \theta_0$. Thus, since $S_{d-k}(\hat{q}^{k_n},k_n)$ converges to $E^s(\hat{q}^{n_k})$ by (4') in Claim \ref{claim: proof of rectangles}, for $k_n$ sufficiently large we see that we may take $U = E^u([\hq^{k_n}, \hp])$ in  (\ref{eqn:output_closeness}) to obtain that $A^{k_n}(\hq^{k_n})$ is $\gamma/2$-Lipschitz on a $\theta_0$-neighborhood of $E^u(\hz)$, verifying (3) for $\hy = \hq^{k_n}$. Moreover, since $U_k(\hat{q}^{k_n}, k_n)$ converges to $E^u(\hq)$, (\ref{eqn:output_closeness}) with $U = E^u(\hz)$ gives 
$$
d_{\Gr}(A^{k_n}(\hat{q}^{k_n})(E^u(\hz)),E^u(\hq))<\epsilon/2
$$
for $n$ sufficiently large. Thus, we have verified that (2) and (3) hold at exactly $\hy = \hq^{-k_n}$ as long as $k_n$ is sufficiently large and we now let $k := k_n$ for any such large $k_n$. 

Finally, we verify that the statements of (2) and (3) hold for all $\hy \in \sigma^{-k}(R_\rho(\hq))$ if $\rho$ is taken sufficiently large. The key observation is that $E^u(\hz)$, where $\hz = [\hy, \hp]$, varies continuously on $\hy$. Since the cocycle for a fixed $k$ also varies continuously, this implies that (2) and (3) are open conditions on $\hy$. Since (2) and (3) hold at $\hq^{k}$, increasing $\rho$ if necessary the same holds in the neighborhood $\sigma^{-k}(R_\rho(\hat{q}))$ of $\hq^{k}$, with $\epsilon$ in Equation \eqref{eqn:output_closeness} and $\gamma$-Lipschitzness as required for (3).
\end{proof}

    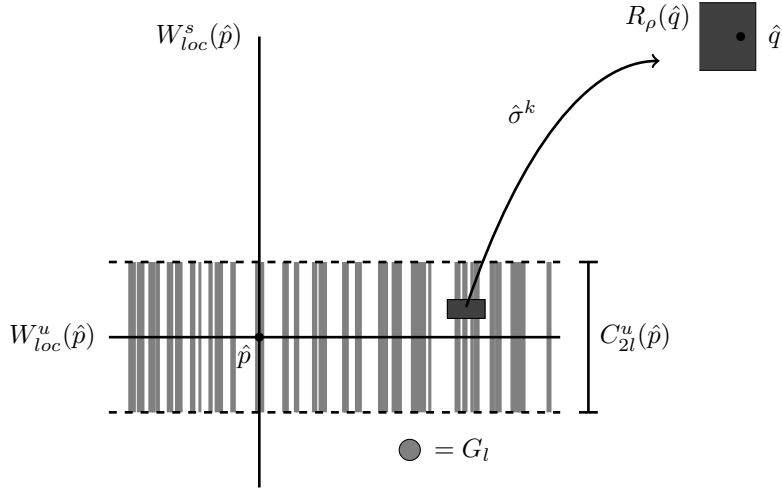
\begin{figure}
\begin{center}
\begin{tikzpicture}

\draw [line width = 2.435856134146136, color=gray] (2.0625146439797817,-1) -- (2.0625146439797817,1);
\draw [line width = 1.1197940714961911, color=gray] (-0.7900045879242279,-1) -- (-0.7900045879242279,1);
\draw [line width = 2.1022946817743984, color=gray] (-0.34915942613080553,-1) -- (-0.34915942613080553,1);
\draw [line width = 2.4909252014133205, color=gray] (3.178196188822154,-1) -- (3.178196188822154,1);
\draw [line width = 1.8858739656205856, color=gray] (3.8509864910646634,-1) -- (3.8509864910646634,1);
\draw [line width = 1.2419430563695284, color=gray] (2.719592981355696,-1) -- (2.719592981355696,1);
\draw [line width = 2.894448818084229, color=gray] (2.1658470934537863,-1) -- (2.1658470934537863,1);
\draw [line width = 2.3628560777590986, color=gray] (2.8871552703316823,-1) -- (2.8871552703316823,1);
\draw [line width = 1.922105103520559, color=gray] (2.096816933545668,-1) -- (2.096816933545668,1);
\draw [line width = 1.2195024543235418, color=gray] (-0.871257285039096,-1) -- (-0.871257285039096,1);
\draw [line width = 1.1072261541766089, color=gray] (3.381715921493708,-1) -- (3.381715921493708,1);
\draw [line width = 2.4912438139694006, color=gray] (0.3487089750989689,-1) -- (0.3487089750989689,1);
\draw [line width = 2.074321209189131, color=gray] (1.859275521928141,-1) -- (1.859275521928141,1);
\draw [line width = 2.872439926982289, color=gray] (3.4865306507033766,-1) -- (3.4865306507033766,1);
\draw [line width = 1.0232976727362866, color=gray] (3.357080250205982,-1) -- (3.357080250205982,1);
\draw [line width = 1.6262831639459001, color=gray] (-0.64818106108254,-1) -- (-0.64818106108254,1);
\draw [line width = 1.7259954041112269, color=gray] (-1.3518717987016942,-1) -- (-1.3518717987016942,1);
\draw [line width = 2.0018003397976223, color=gray] (3.0969758666702836,-1) -- (3.0969758666702836,1);
\draw [line width = 1.8394363496491488, color=gray] (2.047058184265434,-1) -- (2.047058184265434,1);
\draw [line width = 1.5403937365698372, color=gray] (-1.4132884763008484,-1) -- (-1.4132884763008484,1);
\draw [line width = 2.8660977193725956, color=gray] (3.485165477589738,-1) -- (3.485165477589738,1);
\draw [line width = 2.214916636312979, color=gray] (3.4215854721708983,-1) -- (3.4215854721708983,1);
\draw [line width = 2.386561784806678, color=gray] (-0.5518935989937539,-1) -- (-0.5518935989937539,1);
\draw [line width = 1.5179245053305113, color=gray] (-0.8956533787517784,-1) -- (-0.8956533787517784,1);
\draw [line width = 2.8830287043812577, color=gray] (-1.0703574038353176,-1) -- (-1.0703574038353176,1);
\draw [line width = 1.2063638085249653, color=gray] (3.1784620359028706,-1) -- (3.1784620359028706,1);
\draw [line width = 1.174409457526997, color=gray] (2.2663850283102693,-1) -- (2.2663850283102693,1);
\draw [line width = 1.581141694871307, color=gray] (0.4874116082076192,-1) -- (0.4874116082076192,1);
\draw [line width = 1.3952510212561282, color=gray] (0.5067475290892816,-1) -- (0.5067475290892816,1);
\draw [line width = 2.4603572906260833, color=gray] (1.8021611542046658,-1) -- (1.8021611542046658,1);
\draw [line width = 2.628921615299325, color=gray] (1.6238991588783485,-1) -- (1.6238991588783485,1);
\draw [line width = 1.647772437174985, color=gray] (-0.027168949176550994,-1) -- (-0.027168949176550994,1);
\draw [line width = 1.1916818322249954, color=gray] (2.826646106055012,-1) -- (2.826646106055012,1);
\draw [line width = 1.8413850487140124, color=gray] (1.1516740114833053,-1) -- (1.1516740114833053,1);
\draw [line width = 2.878471847330286, color=gray] (-1.6918008866946723,-1) -- (-1.6918008866946723,1);
\draw [line width = 1.3822621359641831, color=gray] (2.7437067406783004,-1) -- (2.7437067406783004,1);
\draw [line width = 2.625800542676249, color=gray] (1.3170605173532883,-1) -- (1.3170605173532883,1);
\draw [line width = 2.8949157280186135, color=gray] (-1.5778397002383946,-1) -- (-1.5778397002383946,1);
\draw [line width = 1.5829665542245717, color=gray] (-0.5100961674593014,-1) -- (-0.5100961674593014,1);
\draw [line width = 2.6514635349874016, color=gray] (-1.4289090682819077,-1) -- (-1.4289090682819077,1);
\draw [line width = 1.221582857518175, color=gray] (1.1488399227121215,-1) -- (1.1488399227121215,1);
\draw [line width = 1.4741707663789532, color=gray] (0.039615285950991286,-1) -- (0.039615285950991286,1);
\draw [line width = 1.2350909442516311, color=gray] (-1.1656692867346623,-1) -- (-1.1656692867346623,1);
\draw [line width = 1.1610487934649298, color=gray] (1.690472711420533,-1) -- (1.690472711420533,1);
\draw [line width = 2.279338868149255, color=gray] (2.6353006987253735,-1) -- (2.6353006987253735,1);
\draw [line width = 2.0245396550707806, color=gray] (0.7354325776006916,-1) -- (0.7354325776006916,1);
\draw [line width = 1.33191867986573, color=gray] (0.8111507813485321,-1) -- (0.8111507813485321,1);
\draw [line width = 2.7694869012113275, color=gray] (0.8513191683636929,-1) -- (0.8513191683636929,1);
\draw [line width = 1.2937072715587579, color=gray] (-0.5078388348863938,-1) -- (-0.5078388348863938,1);
\draw [line width = 1.2779169523049052, color=gray] (-1.2079794649692261,-1) -- (-1.2079794649692261,1);
\draw [line width = 2.5198832096829866, color=gray] (1.144318120844972,-1) -- (1.144318120844972,1);

\draw [line width = 1, dashed] (-2,1) --  (4,1);
\draw [line width = 1, dashed] (-2,-1) --  (4,-1);


\draw [line width = 1] (4.25,1)--(4.5,1);
\draw [line width = 1] (4.375,1)--(4.375,-1);
\draw [line width = 1] (4.25,-1)--(4.5,-1);
\node (a) at (5,0) {$C^u_{2l}(\hat{p})$};

\filldraw[fill=gray] (2,-1.5) circle (4pt);
\draw[fill=darkgray, line width = .0001] (2.5,.25) -- (3,.25) -- (3,.5) -- (2.5,.5) -- (2.5,.25);
\node (a) at (2.7,-1.5) {$=G_l$};

\draw [line width = 1] (-2,0) -- (4,0);
\draw [line width = 1] (0,-2) --  (0,4);
\filldraw (0,0) circle (1.5pt);
\node (a) at (-.2,-.25) {$\hat{p}$};
\node (a) at (-2.75,0) {$W^u_{loc}(\hat{p})$};
\node (a) at (-.8,4) {$W^s_{loc}(\hat{p})$};

\draw[fill=darkgray, line width = .0001] (2.5,.25) -- (3,.25) -- (3,.5) -- (2.5,.5) -- (2.5,.25);

\newcommand\xoff{6.1}
\newcommand\yoff{2.75}

\draw[fill=darkgray, line width = .0001] (\xoff -.25, \yoff+.8) -- (\xoff +.5, \yoff+.8) -- (\xoff +.5, \yoff+1.7) -- (\xoff -.25, \yoff + 1.7);
\filldraw (\xoff+.3, \yoff+1.25) circle (1.5pt);
\node (a) at (\xoff+.75, \yoff+1.25) {$\hat{q}$};
\node (a) at (\xoff-.8, \yoff+1.5) {$R_{\rho}(\hat{q})$};


\draw [line width = 1pt,->] (2.75,.4) arc(130:90:4 and 14);
\node (a) at (3.5,3) {$\hsigma^{k}$};

\end{tikzpicture}
\end{center}
\caption{Constructing $\hat{q}$ and $R_{\rho}(\hat{q})$.}
\end{figure}

Now we use the above lemma to construct a set of rectangles such that given a fixed hyperplane section, there exists some rectangle in the set along which the cocycle can map the unstable direction of a point in the rectangle to avoid the given hyperplane section. 

\begin{proposition}\label{prop:twisting_sets}
Fix $\gamma>0$ and $\theta_0$ any sufficiently small number. Then for every $l \in \N$ there exists $N\in \N$, $\hq\in \hSigma$, $\rho\in \N$, and a finite collection of points $\{\hq_i\}_{1\le i \leq N} \subseteq W^s_{loc}(\hq)$ and associated times $k_i \in \N$ such that:
\begin{enumerate}
    \item [(a)] $R_\rho(\hq_i) \cap R_\rho (\hq_j) = \varnothing$, for $i \neq j$;
    \item [(b)] $\hsigma^{-k_i}(R_\rho(\hq_i))\subseteq C^u_l(\hp)$, for each $1 \leq i \leq N$;
    \item [(c)] For any hyperplane section $H$ of $\Gr(k,d)$, there exists some $i \in \{1,\dots, N\}$ such that for any Oseledec regular $\hy \in \hsigma^{-k_i}(R_\rho(\hq_i))$ we have 
    \begin{equation}\label{eqn:twisting_transversal}  
  d_{\Gr(k,d)}(A^{k_i}(\hy)E^u(\hz), H) > \theta_0,
  \end{equation}
  where $\hz = [\hy, \hp]$.
  
  \item [(d)] For $\hy$ and $\hz$ as above, $A^{k_i}(\hy)\colon \Gr(k,d)\to \Gr(k,d)$ is $\gamma$-Lipschitz on a $\theta_0$-neighborhood of $E^u(\hz)$ 
\end{enumerate}
\end{proposition}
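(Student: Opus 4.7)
The plan is to fix any $\hmu$-generic, Oseledec regular point $\hq \in \hSigma$ and to produce the $\hq_j$ by selecting finitely many points on $W^s_{loc}(\hq)$ whose unstable directions $E^u(\hq_j) \in \Gr(k,d)$ are spread out enough that, for any hyperplane section $H$, at least one $E^u(\hq_j)$ lies at Grassmannian distance strictly greater than $3\theta_0$ from $H$. Applying Lemma~\ref{lem:good_rectangles} to each $\hq_j$ individually with error $\epsilon := \theta_0$ will then furnish $\rho_j$ and $k_j$ such that, for $\hy \in \hsigma^{-k_j}(R_{\rho_j}(\hq_j))$ and $\hz := [\hy,\hp]$, the image $A^{k_j}(\hy) E^u(\hz)$ lies within $\theta_0$ of $E^u(\hq_j)$. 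The triangle inequality then promotes the non-concentration of the $\{E^u(\hq_j)\}$ into the twisting estimate~(c), while (a), (b), and (d) will follow from the lemma together with an enlargement of $\rho$.

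\textbf{Uniform non-concentration along $W^s_{loc}(\hq)$.} Let $\hm$ be a $u$-state over $\hmu$ on $\hSigma \times \Gr(k,d)$. By Proposition~\ref{prop: ustateslieondirac} the disintegration $m_q$ on $\{q\}\times \Gr(k,d)$ coincides with the pushforward of $\hmu^s_{\hq}$ under $\hy \mapsto E^u(\hy)$, while by Proposition~\ref{smoothconditionals} $m_q$ annihilates every hyperplane section of $\Gr(k,d)$. The crux is to upgrade this pointwise vanishing to the uniform bound
\[
\hmu^s_{\hq}\big(\{\hy \in W^s_{loc}(\hq) : d_{\Gr}(E^u(\hy), H) \le 4\theta_0\}\big) < \tfrac{1}{2}
\]
holding for all sufficiently small $\theta_0 > 0$ and every hyperplane section $H$. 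Since $x \mapsto m_x$ is weak$*$ continuous (Proposition~\ref{prop: weakcont}) and the space of hyperplane sections of $\Gr(k,d)$ is parametrized by the compact projective space $\P \Lambda^{d-k}(\R^d)$, an upper semi-continuity argument in the spirit of Claim~\ref{claim: proof of rectangles}---fix a finite $\theta_0$-net of sections, apply the pointwise vanishing together with upper semi-continuity of $x \mapsto m_x(H_{4\theta_0})$ on each net element, then use denseness---yields the bound. The main obstacle is precisely this uniformity step; note that Example~\ref{ex:counterexample} warns that restricting attention to \emph{geometric} hyperplane sections would destroy the compactness needed to carry it out, which is why we formulated irreducibility without the geometric restriction.

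\textbf{Assembling the rectangles.} Using compactness of the space of hyperplane sections, fix a $\theta_0$-dense finite set $\{H^1, \ldots, H^N\}$. For each $j$, the preceding bound, intersected with the full $\hmu^s_{\hq}$-measure set of $\hmu$-generic and Oseledec regular points, has positive $\hmu^s_{\hq}$-measure, so we may select distinct $\hq_j \in W^s_{loc}(\hq)$ which are $\hmu$-generic, Oseledec regular, and satisfy $d_{\Gr}(E^u(\hq_j), H^j) > 4\theta_0$. Apply Lemma~\ref{lem:good_rectangles} to each $\hq_j$ with the prescribed $\gamma$ and $l$ and with $\epsilon = \theta_0$, producing $\rho_j$ and $k_j$. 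Choose $\rho \in \N$ greater than every $\rho_j$ and large enough that the closed rectangles $R_\rho(\hq_j)$ are pairwise disjoint; this is possible since the $\hq_j$ are finitely many distinct points and $R_\rho(\hq_j)$ shrinks to $\{\hq_j\}$ as $\rho \to \infty$. Items (a), (b), and (d) are then immediate from this construction and Lemma~\ref{lem:good_rectangles}. For (c), given any hyperplane section $H$, pick $j$ with $H$ within Hausdorff distance $\theta_0$ of $H^j$ in $\Gr(k,d)$, so that $d_{\Gr}(E^u(\hq_j), H) > 3\theta_0$ by the triangle inequality; then for Oseledec regular $\hy \in \hsigma^{-k_j}(R_\rho(\hq_j))$, Lemma~\ref{lem:good_rectangles}(2) yields $d_{\Gr}(A^{k_j}(\hy) E^u(\hz), E^u(\hq_j)) < \theta_0$, and hence $d_{\Gr}(A^{k_j}(\hy) E^u(\hz), H) > 2\theta_0 > \theta_0$, as required.
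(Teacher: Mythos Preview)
Your proposal is correct and follows essentially the same approach as the paper's proof: both arguments fix a point $\hq$ whose local stable leaf carries a.e.\ $\hmu$-generic Oseledec regular points, use Proposition~\ref{smoothconditionals} together with compactness of the space of hyperplane sections and upper semi-continuity to get a uniform non-concentration bound for $\hy \mapsto E^u(\hy)$ along $W^s_{loc}(\hq)$, choose a finite net $\{H^j\}$ of hyperplane sections and corresponding transversal $\hq_j$, apply Lemma~\ref{lem:good_rectangles} to each, and conclude (c) by the triangle inequality. The paper packages the non-concentration step as a separate Claim~\ref{claim: avoidhyperplanes}; your constants ($4\theta_0 \to 3\theta_0 \to 2\theta_0$) are in fact slightly cleaner than the paper's.
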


\begin{proof}

Let $\hat{q}$ be a point such that $\hat{\mu}^s_{\hq}$-a.e. point in $W^s_{loc}(\hat{q})$ is Osceledec regular and generic for $\hmu$. We will need the following claim, which we will prove immediately after this proposition for the sake of clarity:

\begin{claim} \label{claim: avoidhyperplanes} For any $\theta_0> 0$ sufficiently small, given any hyperplane section $H\subseteq \Gr(k,d)$, there exists a subset $S \subseteq W^s_{loc}(\hq)$ of positive $\mu^s_{\hq}$-measure such that for every $\hz \in S$:
$$d_{\Gr(k,d)}(E^u(\hz), H) > 2\theta_0.$$
\end{claim}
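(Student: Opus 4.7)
The plan is to combine Proposition \ref{smoothconditionals}---the zero-mass statement for $u$-states on linear sections---with a compactness argument on the space of hyperplane sections to get uniform transversality.

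Since $\lambda_k(\hmu) > \lambda_{k+1}(\hmu)$, Proposition \ref{prop: ustateslieondirac} gives that the fiber conditionals of any $u$-state $\hm$ over $\hmu$ satisfy $\hm_{\hy} = \delta_{E^u(\hy)}$ for $\hmu^s_{\hq}$-a.e.~$\hy \in W^s_{loc}(\hq)$. Writing $x = P(\hq)$ and applying Lemma \ref{lem:cts_disintegration_ustate}, the one-sided disintegration becomes
\[
m_x \;=\; \int_{W^s_{loc}(\hq)} \delta_{E^u(\hy)}\, d\hmu^s_{\hq}(\hy) \;=\; (E^u)_\ast \hmu^s_{\hq}.
\]
By Proposition \ref{smoothconditionals}, $m_x(H) = 0$ for every hyperplane section $H \subseteq \Gr(k,d)$, so the set $\{\hy \in W^s_{loc}(\hq) : E^u(\hy) \in H\}$ is $\hmu^s_{\hq}$-null for each individual $H$.

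The core step is upgrading this pointwise null-set statement to a uniform gap across all hyperplane sections simultaneously. Write $H_\theta$ for the closed $\theta$-neighborhood of $H$ inside $\Gr(k,d)$. The space of hyperplane sections is compact, being parameterized by $\P \Lambda^{d-k}(\R^d)$; moreover, for each fixed $\theta$ the map $H \mapsto m_x(H_\theta)$ is upper semicontinuous, since $H \mapsto H_\theta$ is continuous in Hausdorff distance and $H_\theta$ is closed. If no $\theta_0 > 0$ satisfied $\sup_H m_x(H_{2\theta_0}) < 1$, one could extract a sequence $(H_n)$ with $m_x((H_n)_{1/n}) \to 1$ and, by compactness, a subsequential limit $H_\infty$; since $(H_n)_{1/n} \subseteq (H_\infty)_\theta$ eventually for any $\theta > 0$, one would deduce $m_x(H_\infty) = 1$, contradicting the null-set conclusion above.

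Once such a $\theta_0$ is fixed, for every hyperplane section $H$ the set $S := \{\hy \in W^s_{loc}(\hq) : d_{\Gr(k,d)}(E^u(\hy), H) > 2\theta_0\}$ equals $(E^u)^{-1}(H_{2\theta_0}^c)$ and therefore has positive $\hmu^s_{\hq}$-measure, as desired. The main technical point is the compactness/upper semicontinuity argument above; the only bookkeeping subtlety is the identification $m_x = (E^u)_\ast \hmu^s_{\hq}$, which is transparent in the reduced cocycle setting (where the stable holonomies are trivial and Propositions \ref{smoothconditionals} and \ref{prop: ustateslieondirac} apply directly), and the conclusion for $E^u$ transfers back to the original cocycle since the reduction preserves unstable subspaces.
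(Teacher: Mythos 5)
Your proof is correct and follows essentially the same route as the paper: identify $m_x=(E^u)_*\hmu^s_{\hq}$ via Proposition~\ref{prop: ustateslieondirac} and Lemma~\ref{lem:cts_disintegration_ustate}, invoke Proposition~\ref{smoothconditionals} to get zero mass on hyperplane sections, and then exploit compactness of the space of hyperplane sections to extract a uniform $\theta_0$. The only cosmetic difference is that you run a direct sequential-compactness contradiction to produce $\theta_0$, whereas the paper fixes a finite $\theta_0/2$-dense net of hyperplane sections and minimizes over semicontinuity thresholds; both yield that $m_x(H_{2\theta_0})$ is uniformly bounded away from $1$, which is all the claim needs.
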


Now let $\theta_0$ be any sufficiently small number such that Claim \ref{claim: avoidhyperplanes} and Lemma \ref{lem:good_rectangles} hold. Again, observe that how small $\theta_0$ has to be does not depend on $l$.

Fix a collection of hyperplane sections $$\{H_1,\ldots,H_N\}$$ with the property that given any other hyperplane section $H$ there exists some $1 \leq i \leq N$ such that the Hausdorff distance in $\Gr(k,d)$ between $H$ and $H_i$ is less than $\theta_0/2$. This is possible since the space of hyperplane sections is compact with the Hausdorff distance. 

For each $1 \leq i \leq N$, we apply Claim \ref{claim: avoidhyperplanes} with $H = H_i$ to obtain a positive $\mu^s_{\hq}$-measure set $S_i \subseteq W^s_{loc}(\hq)$ of points $\hz$ with 
$$d_{\Gr(k,d)}(E^u(\hz), H_i) > 2\theta_0.$$
For each $1 \leq i \leq N$, we let $\hq_i$ be any such point in $S_i$ which is moreover generic for  and Oseledec regular for $\hmu$. Let $0<\epsilon<\theta_0$ and apply Lemma \ref{lem:good_rectangles} with this choice of $\epsilon,\theta_0$ 
 for each $\hq_i$ to obtain a sequence of rectangles $R_{\rho_i}(\hat{q}_i)$ satisfying the conclusions of that lemma. Let $\rho$ be greater maximum of the $\rho_i$ and chosen large enough that $R_\rho(\hq_i) \cap R_\rho (\hq_j) = \varnothing$, for $i \neq j$, verifying (a). It is evident since the rectangles $R_\rho(\hq_i)$ satisfy the conclusions of Lemma \ref{lem:good_rectangles} that (b) and (d) also hold for the constructed rectangles. It remains to verify (c). 

Let $H$ be any hyperplane section of $\Gr(k,d)$. By construction, there is some $1 \leq i \leq N$ such that the Hausdorff distance in $\Gr(k,d)$ between $H$ and $H_i$ is less than $\theta_0/2$. This, together with the fact that $\angle(E^u(\hat{q}_i),H_i)>2\theta_0$, and $$d_{\Gr}(A^{k_i}(E^u(\hat{z})),E^u(\hat{q_i})))<\theta_0,$$ by (2) in Lemma \ref{lem:good_rectangles} then gives (c) by the triangle inequality. 
\end{proof}

We now prove Claim \ref{claim: avoidhyperplanes}.
\begin{proof}[Proof of Claim \ref{claim: avoidhyperplanes}]

Let $\hat{m}$ be a $u$-state over $\hat{\mu}$ on $\hSigma \times \Gr(k,d)$. For a hyperplane section $H$, let $H_\theta$ be its $\theta$-closed neighborhood in $\Gr(k,d)$. Since by Proposition \ref{smoothconditionals} $m_x(H) = 0$ for any hyperplane section $H$, by upper semi-continuity for each $1 \leq i \leq N$ there is some $\theta_i> 0$ sufficiently small to ensure that $m_{q}(H_{\theta_i}) < 1/2$. 
By taking $\theta_0 = \min_{1 \leq i\leq N} \theta_i/3$, we see that:
\begin{equation} 
\label{eq: supustate} \sup_{1 \leq i \leq N} m_q(H^i_{3\theta_0}) < \frac{1}{2}.
\end{equation}

 Choose a set $\{H^i\}_{1 \leq i \leq N}$ of hyperplane sections of $\Gr(k,d)$ with the property that any other hyperplane section $H \subseteq H^i_{\theta_0/2}$ for some $1 \leq i \leq N$. This is possible since, in the Hausdorff topology, the space of hyperplane sections is compact and the set of hyperplane sections contained in a neighborhood of a fixed hyperplane section contains an open set.

By the disintegration formula:
\begin{align}
m_q &= \int_{W^s_{loc}(\hq)} \hm_{\hx}\, d\hmu^s_{\hq}(\hx)\\ 
&= \int_{W^s_{loc}(\hq)} \delta_{E^u(\hx)}\, d\hmu^s_{\hq}(\hx)\label{eqn:conditional_measures_Euk},\end{align}
 recalling that by Proposition \ref{prop: ustateslieondirac} for $\hmu^s_{\hp}$-a.e. $\hx$ in $W^s_{loc}(\hq)$, we have $\hm_{\hx} = \delta_{E^u(\hx)}$. 

 Now given an arbitrary hyperplane section $H$ by construction there is some $1 \leq i \leq N$ such that $H \subseteq H^i_{\theta_0/2}$. Then by equation (\ref{eq: supustate}) and the disintegration formula (\ref{eqn:conditional_measures_Euk}), we see that set $S$ of $\hx \in W^s_{loc}(\hq)$ with $E^u(\hx) \notin H^i_{3\theta_0}$, must have $\mu^s_{\hq}$-measure at least $1/2$. Finally, since $H \subseteq H^i_{\theta_0/2}$, we conclude that for $\hx \in S$, we have $d_{\Gr}(E^u(\hx), H) > 2\theta_0$, as desired.
\end{proof}

\begin{remark} Observe that $\theta_0$ does not depend on $l$ in the construction in Proposition \ref{prop:twisting_sets}. All other parameters, namely, $\hq'$, $\hq_i'$, $k_i$, $\rho$ and $N$, depend on $l$, but this dependence will not be relevant and we will often omit it.
\end{remark}

With the twisting construction done, now we move to the proof of convergence to a Dirac mass. The main idea is to construct for $\hmu$-a.e. $\hx$ a suitable subsequence of negative times along which the orbit of $\hx$ has some good properties which we define now. 

From now on, fix some $\theta_0$ sufficiently small so that Proposition \ref{prop:twisting_sets} holds and some $0 < \gamma < 1$. For each $l > 0$, we obtain a set of rectangles $R_\rho(\hq_i)$ as in Proposition \ref{prop:twisting_sets} for the parameters $\theta_0$, $l$ and $\gamma$, which for simplicity we will denote simply by $R_1^l, \dots R_N^l$. 

For each $\hx \in \hSigma$ and $n \in \N$, let $k \leq j \leq (d-1)$ be the smallest integer such that $\sigma_j(A^n(\hx_n))> \sigma_{j+1}(A^n(\hx_n))$. Then $S_{d-j}(\hx^n,n)$ is a well-defined $(d-j)$-dimensional linear subspace which defines some linear section $S$ on $\Gr(k, d)$. Fix $H_{\hx_n}$ to be any hyperplane section containing $S$. If $q$ does not exist, i.e. $\sigma_j(A^n(\hx^n)) = \sigma_{j+1}(A^n(\hx^n))$ for all $k \leq j \leq (d-1)$, we fix $H_{\hx^n}$ to be any hyperplane section of $\Gr(k, d)$. 

\begin{definition} We say that a point $\hx \in \hSigma$ is $l$-good if there exists a time $n_l > l$ such that:

\begin{enumerate}
    \item $\hx^{n_j} \in R_i^l$ for some $1 \leq i \leq N$.
    \item The conclusion of Proposition \ref{prop:twisting_sets}(c) with $H = H_{\hx^{n_j}}$ holds for the $R_i^l$ specified in the previous item. That is, for any Oseledec regular $\hy \in \hsigma^{-k_i}(R^l_i)$, we have
    $$ 
  d_{\Gr(k,d)}(A^{k_i}(\hy)E^u(\hz), H_{\hx^{n_j}}) > \theta_0,
  $$
  where $\hz = [\hy, \hp]$.
\end{enumerate}

\end{definition}

\begin{proposition}\label{prop:l_good_full_measure}
For each $l>0$ and any fully supported shift invariant probability measure $\heta$ on $\hSigma$ with continuous local product structure, the set of $l$-good points has full measure with respect to $\heta$.
\end{proposition}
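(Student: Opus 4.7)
Let $R^l_1,\dots,R^l_N$ be the rectangles from Proposition \ref{prop:twisting_sets} and define
\[
T_i:=\{H\text{ a hyperplane section of }\Gr(k,d):\text{the conclusion of Proposition \ref{prop:twisting_sets}(c) holds for }(R^l_i,H)\},
\]
so that $\bigcup_{i=1}^{N} T_i$ is the space of all hyperplane sections of $\Gr(k,d)$. For each $n\in\N$, let
\[
\tilde G_n:=\bigcup_{i=1}^{N}\bigl(R^l_i\cap\{\hy\in\hSigma:H^{(n)}_\hy\in T_i\}\bigr),
\]
where $H^{(n)}_\hy$ denotes the hyperplane $H_\hy$ built from $A^n(\hy)$ as in the paragraph preceding the proposition. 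Unpacking the definitions, $\hx$ is $l$-good if and only if $\hx^n\in\tilde G_n$ for some $n>l$.

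The plan proceeds in three steps. First, I would use the ergodic decomposition of $\heta$ to reduce to the case that $\heta$ is ergodic. Second, I would establish a uniform lower bound $\heta(\tilde G_n)\geq c>0$ independent of $n$: full support yields $\min_i\heta(R^l_i)>0$, and since every $\hy$ admits by Proposition \ref{prop:twisting_sets}(c) at least one index $i(\hy,n)$ with $H^{(n)}_\hy\in T_{i(\hy,n)}$, a pigeonhole count over the finite collection $\{R^l_i\}_{i=1}^{N}$ delivers the uniform bound. Third, I would apply Birkhoff's theorem to the ergodic measure $\heta$ and the inverse shift $\hsigma^{-1}$: combined with the observation that the set of $l$-good points is essentially $\hsigma$-invariant (modulo a finite initial segment of the orbit), the uniform positive lower bound forces the $l$-good set to have positive, and thus full, $\heta$-measure.

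The principal obstacle is that $\tilde G_n$ genuinely depends on $n$, which prevents a direct application of Birkhoff to a single fixed set in a single stroke. I would handle this via a dichotomy on the Lyapunov spectrum of $\heta$. If $\heta$ admits a gap at some index $j\geq k$, then $S_{d-j}(A^n(\hy))\to E^s_{d-j}(\hy)$ by \eqref{eq: convergenceofsubspaces}, so along $\heta$-typical orbits the condition ``$H^{(n)}_\hy\in T_i$'' stabilizes to a limiting condition on the Oseledec data of $\hy$, and $\tilde G_n$ may be replaced by a fixed $\hsigma$-invariant set whose positivity follows from Proposition \ref{smoothconditionals} together with Remark \ref{rem: unreduced}. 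If no such gap exists, the freedom in the definition of $H^{(n)}_\hy$ allows one to pre-assign $H^{(n)}_\hy\in T_{i(\hy)}$ whenever $\hy\in R^l_{i(\hy)}$, again reducing $\tilde G_n$ to a fixed set of positive $\heta$-measure. Either way, Birkhoff recurrence to the fixed set furnishes infinitely many visits of the backward orbit and concludes the proof.
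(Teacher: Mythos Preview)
Your plan has a genuine gap at Step~3 and in the dichotomy proposed to patch it. The set $X_l$ of $l$-good points is \emph{not} essentially $\hsigma$-invariant: if $\hx$ is $l$-good at time $n$ (so $\hx^n\in\tilde G_n$), then $(\hsigma\hx)^{n+1}=\hx^n$, but the hyperplane $H^{(n+1)}_{\hx^n}$ is built from $A^{n+1}(\hx^n)=A(\hx)\,A^n(\hx^n)$ and need not lie in the same $T_i$ as $H^{(n)}_{\hx^n}$. Your dichotomy does not stabilize $\tilde G_n$ either. In Case~1, the index $j$ entering the definition of $H^{(n)}_\hy$ is the \emph{smallest} $j\ge k$ with a singular-value gap for the specific matrix $A^n(\hy)$; generically this is $j=k$ regardless of the Lyapunov spectrum of $\heta$, and $S_{d-k}(A^n(\hy))$ converges only when $\lambda_k(\heta)>\lambda_{k+1}(\heta)$---precisely what Theorem~\ref{theorem: maintheorem} aims to establish. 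In Case~2, the freedom to choose $H^{(n)}_\hy$ arbitrarily is available only when $\sigma_j(A^n(\hy))=\sigma_{j+1}(A^n(\hy))$ for \emph{all} $k\le j\le d-1$, a codimension condition that essentially never holds at finite $n$, independent of the Lyapunov data. (A secondary issue: the ergodic components in Step~1 need not have full support or continuous product structure; the proposition does not assume ergodicity and the paper's argument does not use it.)

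The paper bypasses the $n$-dependence by a leafwise density argument rather than Birkhoff. The key point---which your pigeonhole gestures at but does not exploit---is that the reduced cocycle is constant on local stable leaves, so $H^{(n)}_{\hy}$ is the same for every $\hy\in W^s_{loc}(\hx^n)$. Since the $\hq_i$ all lie on $W^s_{loc}(\hq)$, whenever $\hx^n\in R=\bigsqcup_i R^l_i$ the leaf $W^s_{loc}(\hx^n)$ meets every $R^l_j$, and the \emph{entire} slice $W^s_{loc}(\hx^n)\cap R^l_j$ (for the $j$ produced by Proposition~\ref{prop:twisting_sets}(c) applied to $H=H^{(n)}_{\hx^n}$) lies in $\tilde G_n$. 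Hence $\hsigma^n$ of this slice is a set of $l$-good points inside the shrinking stable ball $\hsigma^n(W^s_{loc}(\hx^n))\subset W^s_{loc}(\hx)$, and by Lemma~\ref{lem:measure_distortion_local_stable} it carries a uniformly positive proportion of $\heta^s_{\hx}$ restricted to that ball. Recurrence to $R$ supplies such balls of arbitrarily small radius around $\heta^s_{\hx}$-a.e.\ point, and a Lebesgue-density criterion (Lemma~\ref{lem:measure_estimate}) then gives $\heta^s_{\hx}(X_l\cap W^s_{loc}(\hx))=1$ on every local stable leaf, hence $\heta(X_l)=1$.
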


Before proceeding to the proof we state a lemma. This lemma is an analogue of the Lebesgue density theorem for measures with local product structure. It is likely possible to deduce it from Federer's book \cite[Sec.~2.9]{federer1969geometric}, but we expect that this would take longer than reading the proof below.

\begin{lemma}\label{lem:measure_estimate}
Let $\hnu$ be a probability measure with local product structure on $\hat{\Sigma}$ and $0<\alpha<1$. Let $G\subseteq \hat{\Sigma}$ be some Borel measurable set. Suppose that for $\hnu$-almost every $\hat{x}\in \hat{\Sigma}$, that for $\hnu^s_{\hat{x}}$ a.e.~$\hy\in W^s_{loc}(\hat{x})$, that for any $\epsilon>0$ there exists a set $S\subset W^s_{loc}(\hat{x})\cap G$ and $0<\epsilon'\le \epsilon$, with 
\begin{enumerate}
    \item
    $S\subseteq B_{\epsilon'}(\hat{y})$,
 \item
    $$\frac{\hnu^s_{\hat{x}}(S)}{\hnu^s_{\hat{x}}(B_{\epsilon'}(\hat{x})\cap W^s_{loc}(\hat{x}))}>\alpha>0,$$
    \item
    $S\cap G$ is clopen in $W^s_{loc}(\hat{x})$.
\end{enumerate}
Then $\hat{\nu}(G)=1$.
\end{lemma}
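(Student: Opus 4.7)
The plan is a contradiction argument via a Lebesgue density theorem on local stable leaves. Assume $\hnu(G) < 1$; by disintegrating $\hnu$ along stable leaves using the continuous local product structure, there is a positive-$\hnu$-measure set of $\hx$ on which the hypothesis holds and $\hnu^s_{\hx}(G^c \cap W^s_{loc}(\hx)) > 0$. Fix such an $\hx$ and set $H := G^c \cap W^s_{loc}(\hx)$.

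Next, I would apply a Lebesgue differentiation/martingale convergence theorem to $\mathbf{1}_H$ along the filtration of $W^s_{loc}(\hx)$ by the cylinder partitions at scale $2^{-n}$---observing that for any $\hy \in W^s_{loc}(\hx)$, the set $B_{2^{-n}}(\hy) \cap W^s_{loc}(\hx)$ is exactly the clopen cylinder determined by $(y_{-n+1},\dots,y_{-1})$, so these balls form a nested filtration generating the Borel $\sigma$-algebra of $W^s_{loc}(\hx)$. This yields that for $\hnu^s_{\hx}$-a.e.\ $\hy \in H$,
\[
\lim_{\epsilon' \to 0} \frac{\hnu^s_{\hx}\bigl(H \cap B_{\epsilon'}(\hy)\bigr)}{\hnu^s_{\hx}\bigl(B_{\epsilon'}(\hy) \cap W^s_{loc}(\hx)\bigr)} = 1.
\]
I then pick such a density point $\hy$ which also lies in the full-measure set where the lemma's hypothesis is valid.

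Applying that hypothesis at $\hy$ with arbitrarily small $\epsilon > 0$ produces a clopen $S \subseteq B_{\epsilon'}(\hy) \cap W^s_{loc}(\hx) \cap G$ with $\hnu^s_{\hx}(S) > \alpha \cdot \hnu^s_{\hx}(B_{\epsilon'}(\hx) \cap W^s_{loc}(\hx))$. To convert this into a lower bound on the density of $G$ inside $B_{\epsilon'}(\hy)$, I would compare the two reference measures by iterating $\hsigma$ forward a fixed number of times to put both cylinders at a common coarse scale, then use Lemma \ref{lem:measure_distortion_local_stable} together with the continuity and strict positivity of the density $\psi$ in \eqref{eqn:definition_of_product_structure} to extract a uniform constant $C > 0$ with
\[
\hnu^s_{\hx}(S) \ge \alpha C \cdot \hnu^s_{\hx}\bigl(B_{\epsilon'}(\hy) \cap W^s_{loc}(\hx)\bigr).
\]
Since $S \subseteq G$ is disjoint from $H$, the density of $H$ in $B_{\epsilon'}(\hy)$ is then at most $1 - \alpha C$ for arbitrarily small $\epsilon'$, contradicting the density-$1$ property of $\hy$.

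The main obstacle I anticipate is precisely this comparison of ball measures centered at $\hx$ and at $\hy$ on the same leaf: the two balls are combinatorially distinct cylinders and their $\hnu^s_{\hx}$-measures need not coincide, so producing a constant $C$ uniform in $\epsilon'$ requires careful use of Lemma \ref{lem:measure_distortion_local_stable} to translate the question to a single fundamental scale where the continuity of $\psi$ closes things out. Once this is in place, the remaining Lebesgue density argument is routine.
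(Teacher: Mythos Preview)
Your overall strategy---contradiction via martingale convergence/Lebesgue density on the cylinder filtration of $W^s_{loc}(\hat{x})$---is correct and is genuinely different from the paper's argument. The paper instead embeds $W^s_{loc}(\hat{x})$ into $\R$ as a Cantor set, applies the Besicovitch covering lemma to get a clopen subset $K_1\subseteq G$ with $\hnu^s_{\hat{x}}(K_1)\ge \alpha/C_1$, and then iterates on the clopen complement $\Omega\setminus K_1$ to exhaust the leaf exponentially fast. Their iteration relies essentially on hypothesis~(3) (clopenness of $S$) to keep the complement closed at each stage; your density argument never uses~(3), which is a nice economy.

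Two remarks on the one obstacle you flag. First, the denominator $B_{\epsilon'}(\hat{x})$ in condition~(2) is almost certainly a typo for $B_{\epsilon'}(\hat{y})$: the paper's own proof translates the hypothesis into ``for $\mathbb{P}$-a.e.\ $x$ there exist arbitrarily small balls $O(x)$ with $\mathbb{P}(G\cap O(x))/\mathbb{P}(O(x))>\alpha$,'' i.e.\ balls centered at the generic point, and the application in Proposition~\ref{prop:l_good_full_measure} likewise forms the ratio against a cylinder around the generic recurrent point. With $\hat{y}$ in the denominator, no comparison is needed and your argument closes immediately.

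Second, if one insists on the literal statement with $\hat{x}$, your proposed comparison via Lemma~\ref{lem:measure_distortion_local_stable} does not go through. That lemma preserves ratios of subsets \emph{of the same} $W^s_{loc}$ under forward iteration, but after iterating so that the two depth-$n$ cylinders $B_{\epsilon'}(\hat{x})\cap W^s_{loc}(\hat{x})$ and $B_{\epsilon'}(\hat{y})\cap W^s_{loc}(\hat{x})$ each become full local stable leaves, they lie over \emph{different} base points (possibly in different $[0;i]$ cylinders), so neither $\hnu^s_{\hsigma^n\hat{x}}$ nor the density $\psi$ lets you compare them. Concretely, for a non-uniform Bernoulli measure the ratio of two depth-$n$ cylinders is unbounded in $n$, so no uniform $C$ exists. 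This is not a defect of your approach but of the literal (mis)statement.
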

\begin{proof}
Clearly from local product structure it suffices to verify that $\hat{\nu}^s_{\hat{x}}(G\cap W^s_{loc}(\hat{x}))=1$ for $\hat{\eta}$-a.e.\ $\hat{x}$.  We will embed $W^s_{loc}$ into Euclidean space and then make use of the following claim.
%

\

\noindent {\bf Claim:} For each $m\in \N$, there exists $C_m>0$ such that if $\mathbb{P}$ is a probability measure in $\R^m$ with compact support $S$ and $G\subseteq S$  is a set such that for $\mathbb{P}$-almost every $x\in S$ there exists arbitrarily small open balls $O(x)$ such that: 
\[
\frac{\P(G\cap O(x))}{\P(O(x))}>\alpha>0,
\]
then $\mu(G)\ge \alpha/C_m$. In fact, if each $G\cap O(x)$ contains a closed subset $K$ that is clopen in $S$ such that 
\[
\frac{\P(G\cap K)}{\P(O(x))}>\alpha>0,
\] then $G$ contains a closed subset of measure at least $\alpha/C_m$ that is clopen in $S$.
\begin{proof} Let $S'$ be the subset of full $\P$-measure for which the density property holds. Let $\mc{O} = \{O(x)\}_{x \in S'}$ be the open cover of $S'$ by the sets $O(x)$ guaranteed by the condition. Then the Besicovitch Covering Lemma implies there is some $C_m$ such that we may find $C_m$ families $\mc{A}_1,\ldots,\mc{A}_{C_m}\subseteq \mc{O}$ such that for each $1 \leq i\leq C_m$ the elements of $\mc{A}_i$ are disjoint and $S'$ is contained in the union of all the $\mc{A}_i$'s elements. Letting $A_i$ be the union of the sets in $\mc{A}_i$, we see that $\P(A_i\cap G)\ge \alpha\P(A_i)$.  Hence, as $\{A_i\}$ is an at most $C_m$-fold cover of $S'$, which has $\P(S') = 1$, it follows that $\P(G)\ge \alpha/C_n$. 

The claim about a closed subset of $G$ follows by taking the clopen in $S$ subsets $K_i$ in each $A_i$.
\end{proof}

The set $W^s_{loc}(\hat{x})$ is a Cantor set which we may embed H\"older continuously in $\{0,1\}^{\N}$ by coding the symbols used to define $\Sigma$. The Cantor set $\{0,1\}^{\N}$ has a H\"older continuous embedding in $\R$ as the middle thirds Cantor set by sending each string to the appropriate real number with no $1$'s in its ternary expansion. In fact, the embedding into $\R$ carries balls in $\{0,1\}^{\N}$ into balls in $\R$ because the embedding is increasing in the obvious way. Let $\phi$ denote this particular embedding $\phi\colon W^s_{loc}(\hat{x})\to \R$.

Let $\P = \phi_* \hnu^s_{\hx}$ denote the pushforward of the measure $\hat{\nu}^s_{\hx}$ on Euclidean space under the embedding $\phi$ and $G_\phi = \phi(G) \subseteq \Omega=\phi(W^s_{loc}(\hat{x}))$ denote the pushforward of the set of good points. This set is still Borel because it a Borel subset of a closed subspace. Moreover, as the embedding carries balls in the Cantor set into intervals in the real line, we may apply the claim, which requires a covering by balls. Thus we see that there is a closed subset $K_1\subset G$ with $\P(K_1)\ge \alpha/C_1$ that is clopen in $\Omega$. As $K_1$ is clopen in $\Omega$, $\Omega_2=\Omega\setminus K_1$ is a closed set and a positive distance away from $K_1$. Thus $\Omega_2$ also has a covering by balls as in the claim. And, importantly, because $\Omega_2$ is a positive distance from $K_1$, and the balls in the claim are arbitrarily small, from the claim we obtain another clopen subset $K_2\subseteq \Omega_2$ disjoint from $K_1$ with measure 
\[
\P(K_2)>(\alpha/C_1)\P(\Omega_2)\ge (\alpha/C_1)(1-\alpha/C_1).
\]
We may then consider the set $\Omega_2\setminus K_2$ and continue inductively. The $\P$-measure of the complement of $\sqcup_{i=1}^n K_i$ goes to zero exponentially quickly and the claim follows.
\end{proof}


We now return to the proof that $l$-good points have full measure.

\begin{proof}[Proof of Proposition \ref{prop:l_good_full_measure}]
Let $X_{l}$ denote the set of $l$-good points,
 and let $R_1,\ldots, R_N$ be the rectangles guaranteed by Proposition \ref{prop:twisting_sets} applied for the fixed $\theta_0>0$ and our choice of $l\in \N$. We proceed by checking the criterion in Lemma \ref{lem:measure_estimate} for $G=X_l$. The point of the proof is that if a point $\hx$ visits $R_i$ at some point $\hx^n$ in its past, then there is some $j$ such that the points in $W^s_{loc}(\hx^n)$ in $R_j$ are $\ell$-good. This is because $A^n(\hx^n)$ is constant on $W^s_{loc}(\hx^n)$ and this specific linear map is always $l$-good for the points in some $R_j$.

Let $R=\sqcup_{1\le i\le N} R_i$. The following is immediate from the continuous product structure: There exists $K>1$ such that for any $\hx\in R$ and $1\le i\le N$,
\begin{equation}\label{eqn:continuity_lower_bound_a}
K^{-1}<\hat{\eta}^s_{\hx}(R_i\cap W^s_{loc}(\hx))<K.
\end{equation}

For $\hat{\eta}$-a.e.\ $\hx\in \hat{\Sigma}$, there exist infinitely many $n\ge l$ such that $\hsigma^{-n}(\hx)\in R$. Consider now the matrix $A^n(\sigma^{-n}(\hx))$. Note that as the cocycle is constant on local stable leaves that for any $\hx'\in W^s_{loc}(\hsigma^{-n}(\hx))$ that $A^n(\hx')=A^n(\hsigma^{-n}(\hat{\omega}))$. By Proposition \ref{prop:twisting_sets}, there exists some $i\in \{1,\ldots, N\}$ such that if we take $H=S_{d-d^u}(A^n(\hsigma^{-n}(\hx)))$, and any $\hat{y}$ in $R_i\cap W^s_{loc}(\hat{\omega}')$, then \eqref{eqn:twisting_transversal} holds for that $\hat{y}$. This means that $\hsigma^n(\hat{y})$ is $\ell$-good at time $n$.

In particular, we see that all points in $\hsigma^n(R_i\cap W^s_{loc}(\hsigma^{-n}(\hx)))$ are in $X_l$. Call this set $G_{n,\hx}$. In order to conclude, it suffices to show that there exists some $D>0$ independent of $\hx$ such that 
\[
\frac{\hat{\eta}^s_{\hx}(G_{n,\hx})}{\hat{\eta}^s_{\hx}(\hsigma^n(W^s_{loc}(G_{n,\hx})))}>D>0.
\]
But by equation \eqref{eqn:continuity_lower_bound_a}, this is immediate from Lemma \ref{lem:measure_distortion_local_stable}. We are now done because we verified the criterion in Lemma \ref{lem:measure_estimate}.
\end{proof}

Finally, we show that the conditional measures for a $u$-state of $\heta$ must also be Dirac masses, i.e. Proposition \ref{prop: convergencetodirac}. The proof will use the following lemma:
   \begin{lemma} \label{lem: equicontinuity} Let $\varepsilon > 0$ and $\theta_0 > 0$. Then there exists $\delta > 0$ and  such that for any $A \in \GL(d,\R)$ the following holds. Fix $1 \leq k \leq d$ and suppose that $q \geq k$ is the smallest integer such that $\sigma_q(A) > \sigma_{q+1}(A).$ Let $H$ be any geometric hyperplane section of $\Gr(k, d)$ containing the linear section defined by $S_q(A)$ in $\Gr(k,d)$, and if $q$ does not exist (i.e. $\sigma_q(A) = \sigma_{q+1}(A)$ for $k \leq q \leq d-1$), let $H$ be any hyperplane section. 

    Then for any $\xi_1, \xi_2 \in \Gr(k, d)$ such that $d_{\Gr(k,d)}(\xi_i, H) > \theta_0$, $i = 1,2$:
    $$d_{\Gr(k,d)}(\xi_1, \xi_2) < \delta \implies d_{\Gr(k,d)}(A\xi_1, A\xi_2) < \varepsilon. $$
    \end{lemma}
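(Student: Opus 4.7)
My plan is to reduce to $A$ diagonal via the SVD and then argue by contradiction using compactness in the space of quasi-projective maps. First, since $A = U\Sigma V$ with $U, V \in O(d)$, and orthogonal matrices act as isometries on $\Gr(k,d)$, I can replace $\xi_i$ by $V\xi_i$ and compose with $U$ at the end. Since the hypothesis on $H$ is also preserved under isometry (by replacing $H$ with $V^{-1}H$), this reduces the problem to the case where $A = \Sigma$ is diagonal with singular values $\sigma_1 \geq \cdots \geq \sigma_d$. Further, since the induced action on $\Gr(k,d)$ is projective, I may rescale so that $\|\Lambda^k A\| = 1$, i.e., $\sigma_1 \cdots \sigma_k = 1$. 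In particular, all normalized Plücker coordinates have modulus at most one.

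The heart of the argument is a contradiction and compactness argument. Suppose the lemma fails, so for some $\varepsilon, \theta_0 > 0$ there exist sequences $A_n \in \GL(d,\R)$, valid geometric hyperplanes $H_n = H_{W_n}$ with $W_n \in \Gr(d-k,d)$, and pairs $\xi_n^1, \xi_n^2$ with $d(\xi_n^i, H_n) > \theta_0$, $d(\xi_n^1, \xi_n^2) \to 0$, yet $d(A_n \xi_n^1, A_n \xi_n^2) \geq \varepsilon$. Passing to a subsequence and applying the reductions above, I may assume $A_n = \Sigma_n$ is diagonal and normalized, that both $\xi_n^i \to \xi$ to a common limit, that $W_n \to W_\infty$ in $\Gr(d-k,d)$ so that $H_n \to H_\infty := H_{W_\infty}$, that the subspaces $S_q(A_n)$ converge to some $S_\infty$, and that $\Lambda^k A_n$ converges to some quasi-projective map $Q$ of norm one on $\P\Lambda^k(\R^d)$.

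The crucial step is to show that $\ker Q \subseteq H_\infty$. After normalization, the eigenvalues of $\Lambda^k A_n$ on the Plücker basis are $\sigma_{i_1}^n \cdots \sigma_{i_k}^n / (\sigma_1^n \cdots \sigma_k^n) \in [0,1]$, and by the definition of $q$ the full value $1$ is achieved precisely on Plücker vectors of the form $e_1 \wedge \cdots \wedge e_{k-1} \wedge e_{j}$ for $k \leq j \leq q$. Any Plücker basis element whose eigenvalue tends to zero in the limit must therefore ``exchange'' at least one top singular direction for a singular direction with index $> q$ or cause a loss of the top $k-1$ directions. Both kinds of losses force the associated $k$-plane to land in the linear section defined by $S_q(A_n)$ (in the interpretation consistent with the Proposition's usage), which by hypothesis is contained in $H_n$. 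Passing to the limit by continuity of the defining incidence relations, $\ker Q \subseteq H_\infty$. Combined with $d(\xi, H_\infty) \geq \theta_0$, we conclude $\xi \notin \ker Q$, and Lemma \ref{lem: quasiprojectiveconvergence } then gives $A_n \xi_n^i \to Q\xi$ for both $i$, contradicting $d(A_n \xi_n^1, A_n \xi_n^2) \geq \varepsilon$.

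The main obstacle is the key step of identifying $\ker Q$ with a subset of $H_\infty$. This requires a careful combinatorial analysis of which Plücker basis vectors survive under the action of the normalized $\Lambda^k A_n$ in the limit, and matching this with the defining condition of the linear section associated to $S_q(A)$. The subtlety lies in correctly handling the case where the gap ratio $\sigma_{q+1}/\sigma_q$ stays bounded away from zero (in which case $\ker Q$ may be smaller than expected, but still contained in the relevant Schubert variety), as well as ensuring that the geometric constraint on $H_n$ is preserved in the limit as $W_n \to W_\infty$.
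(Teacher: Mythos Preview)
Your approach via compactness of quasi-projective maps is genuinely different from the paper's, which is entirely elementary. After the shared SVD reduction to diagonal $A=\mathrm{diag}(a_1,\ldots,a_d)$, the paper simply parametrizes $k$-planes $\theta_0$-away from $H_{e_{k+1}\wedge\cdots\wedge e_d}$ as graphs of linear maps $\phi\colon\R^k\to\R^{d-k}$ with $\|\phi\|\le M(\theta_0)$, observes that $A$ acts by $(A\phi)_{ij}=(a_{k+i}/a_j)\phi_{ij}$, and reads off $1$-Lipschitzness from $a_{k+i}\le a_k\le a_j$. No compactness, no limits---$\delta$ comes directly from the bi-Lipschitz comparison between the graph chart and $d_{\Gr}$.

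The step you flag as ``the main obstacle'' is a genuine gap, not a subtlety to be filled in. Your dichotomy for when a Pl\"ucker eigenvalue $\lambda_I$ tends to zero is correct, but the alternative ``loss of one of the top $k-1$ directions'' does \emph{not} force the corresponding $k$-plane into the linear section defined by $S_{d-q}(A)$. Concretely: take $d=4$, $k=2$, $A_n=\mathrm{diag}(n^2,n,n,1)$, so $q=3$ and $S_{d-q}(A_n)=\mathrm{span}(e_4)$. Then $\lambda_{\{2,3\}}=\sigma_2\sigma_3/(\sigma_1\sigma_2)=1/n\to 0$, so $\mathrm{span}(e_2,e_3)\in\ker Q$, yet this plane is disjoint from $\mathrm{span}(e_4)$. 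Choosing the legitimate $H_n=H_{W_n}$ with $W_n=\mathrm{span}(e_1,e_4)\supseteq\mathrm{span}(e_4)$ gives $\mathrm{span}(e_2,e_3)\notin H_\infty$, so $\ker Q\not\subseteq H_\infty$ and your contradiction does not close. (In fact, with $\xi_n^1=\mathrm{span}(e_2,e_3+n^{-1}e_1)$ and $\xi_n^2=\mathrm{span}(e_2+n^{-1}e_1,e_3)$, both bounded away from $H_{\mathrm{span}(e_1,e_4)}$, one gets $A_n\xi_n^i$ staying a fixed distance apart---so the lemma is delicate for general $H$, and the paper's own ``without loss of generality $H$ is defined by $e_{k+1}\wedge\cdots\wedge e_d$'' is likewise not fully justified.)

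Both arguments become correct if one restricts to $H=H_{\mathrm{span}(e_{k+1},\ldots,e_d)}$ after SVD, which is the choice actually used in the application. For this $H$ your kernel claim is immediate: $\lambda_{\{1,\ldots,k\}}=1$, so any $V\in\ker Q\cap\Gr(k,d)$ has vanishing top Pl\"ucker coordinate and therefore meets $\mathrm{span}(e_{k+1},\ldots,e_d)$.
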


    \begin{proof}
Let $A \in \GL(d,\R)$. Since the metric on the Grassmannian is invariant by isometries of $\R^d$, by the singular value decomposition theorem we may assume that $A$ is of the form $A = \text{diag}(a_1, \dots, a_d)$, with $a_1 \geq \dots \geq a_d > 0$. Without loss of generality we may also assume that $H$ is the hyperplane section defined by $e_{k+1} \wedge \dots \wedge e_d$. 

Thus, we can describe elements of $\{\xi \in \Gr(k,d): d_{\Gr(k,d)}(\xi, H) > \theta_0\}$ as graphs with bounded slopes over the first $k$ coordinates, i.e., we may embed the preceeding set into the set $\text{End}_M(\R^k, \R^{d-k})$ of linear maps from the first $k$ coordinates to the last $d-k$ coordinates of $\R^d$ with a norm bounded by $M > 0$, where $M$ only depends on $\theta_0$. Moreover, it is not hard to see that in this set $d_{\Gr(k,d)}$ is equivalent to the distance induced by the operator norm on $\text{End}_M(\R^k, \R^{d-k})$. Since all matrix norms are equivalent we endow $\text{End}_M(\R^k, \R^{d-k})$ with the max norm, i.e., for $\phi \in \text{End}_M(\R^k, \R^{d-k})$ given in a basis by $\phi_{ij}$, we let $\|\phi\| = \max_{i,j} |\phi_{ij}|$ . 

Let $\phi_1, \phi_2 \in \text{End}_M(\R^k, \R^{d-k})$ correspond to $\xi_1, \xi_2 \in \Gr(k,d)$ as in the hypotheses of the theorem and $A\phi_1, A\phi_2 \in \text{End}_M(\R^k, \R^{d-k})$ correspond to $A\xi_1, A\xi_2 \in \Gr(k,d)$. To conclude, it suffices to find a uniform bound for $|(A\phi_1-A\phi_2)_{ij}|$ in terms of $|(\phi_1-\phi_2)_{ij}|$. It is clear that for any $\phi \in \text{End}_M(\R^k, \R^{d-k})$, we have $|(A\phi)_{ij}| \leq (a_{k+1}/a_k) \dot |\phi_{ij}|$, since the action of $A$ expands the first $k$ coordinates by at least $a_k$ and the last $d-k$ by at most $a_{k+1}$. Since $a_{k+1}/a_k \leq 1$ the proof is complete. 
\end{proof}

    \begin{remark}
        Observe that the crucial fact is that $\delta$ works for \textit{any} $A \in \GL(d,\R).$ In other words, the action of linear maps in a region uniformly bounded away from the least expanded subspaces is equicontinuous.
    \end{remark}
\begin{proof}[Proof of Proposition \ref{prop: convergencetodirac}]
    Since the countable intersection of full measure sets has full measure, by Proposition \ref{prop:l_good_full_measure}, we see that $\heta$-a.e. $\hx$ is $l$-good for every $l$. Recall that for $\heta$-a.e. $\hx$ by Proposition \ref{prop: martingale}:
    $$\hm^{\eta}_{\hx} = \lim_{n \to \infty} A^n(\hx^n)_* m^{\eta}_{\hat{x}^n}.$$ 

    Hence there is a full $\heta$-measure set $S \subseteq \hSigma$ such that the above holds and every point in $S$ is $l$-good for every $l > 0$. Fix any such $\hx$ in $S$.  For all $l> 0$  fix $n_l > l$ to be a time guaranteed by the definition of $l$-good for $\hx$. We say that a probability measure $m$ on a metric space is $\varepsilon$\emph{-concentrated} at a point $z$ if $m(B_{\epsilon}(z))\ge 1-\varepsilon$, where $B_{\epsilon}(z)$ is the open ball of radius $\epsilon$ about $z$.

\begin{claim}\label{claim:eps_concentrated}
Let $\varepsilon > 0$. Then for all $l > 0$ sufficiently large there is an $i \in \{1, \dots ,N\}$ such that for $k_i = k_i(l)$ as in Proposition \ref{prop:twisting_sets}, the measure
$$A^{n_l+k_i+l}(\hx^{n_l+k_i+l})_* m^{\eta}_{x^{n_l+k_i+l}}$$ is $\varepsilon$-concentrated.
\end{claim}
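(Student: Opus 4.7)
The plan is to decompose $A^{n_l+k_i+l}(\hx^{n_l+k_i+l}) = A^{n_l}(\hx^{n_l}) \circ A^{k_i}(\hx^{n_l+k_i}) \circ A^l(\hx^{n_l+k_i+l})$ and to track how concentration of the pushforward measure propagates through each of the three factors. Fix a small $\delta > 0$, to be chosen later in terms of $\varepsilon$, $\gamma$, and $\theta_0$.

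First I would show that the innermost pushforward $\nu_1 := A^l(\hx^{n_l+k_i+l})_* m^{\eta}_{x^{n_l+k_i+l}}$ is $\delta$-concentrated at the point $E^u(\hz)\in\Gr(k,d)$, where $\hz = [\hx^{n_l+k_i}, \hp]$, provided $l$ is large. The $l$-good property of $\hx$ combined with Proposition \ref{prop:twisting_sets}(b) yields $\hx^{n_l+k_i} \in C^u_l(\hp)$, which forces the reduced point $x^{n_l+k_i+l} = P(\hx^{n_l+k_i+l}) \in \Sigma$ to agree with $p$ in its first $l+1$ symbols; in particular $x^{n_l+k_i+l} \to p$ as $l \to \infty$. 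By the weak-$*$ continuity of the disintegration (Proposition \ref{prop: weakcont}), $m^{\eta}_{x^{n_l+k_i+l}} \to m^{\eta}_p$. Passing to a subsequence, compactness of the space of quasi-projective maps produces a limit $Q$ of the normalized maps $A^l(\hx^{n_l+k_i+l})$. The Hölder shadowing estimates together with the exponential singular value gap of $A(\hp)$ at index $k$ should force $Q$ to have image $\{E^u(\hp)\}$ and kernel contained in the hyperplane section defined by $E^s(\hp)$. By Proposition \ref{smoothconditionals}, $m^{\eta}_p$ annihilates this hyperplane section, so Lemma \ref{lem: quasiprojectiveconvergence } yields $\nu_1 \to \delta_{E^u(\hp)}$. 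Finally, Lemma \ref{lem:unstable_bundles_cts_along_unstable_leaves} and the fact that $\hz \to \hp$ along $W^u_{loc}(\hp)$ give $E^u(\hz) \to E^u(\hp)$, so the asserted concentration at $E^u(\hz)$ follows.

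Second, I would use Proposition \ref{prop:twisting_sets}(d), which says that $A^{k_i}(\hx^{n_l+k_i})$ acts $\gamma$-Lipschitz on the $\theta_0$-neighborhood of $E^u(\hz)$. Choosing $\delta \leq \theta_0$, the pushforward $\nu_2 := A^{k_i}(\hx^{n_l+k_i})_* \nu_1$ is then $\gamma\delta$-concentrated at $A^{k_i}(\hx^{n_l+k_i}) E^u(\hz)$. Third, I would invoke the $l$-good property (Proposition \ref{prop:twisting_sets}(c) with $H = H_{\hx^{n_l}}$) to conclude that this concentration point sits at distance greater than $\theta_0$ from the hyperplane section $H_{\hx^{n_l}}$, which by construction contains the least-expanded $(d-k)$-dimensional subspace of $A^{n_l}(\hx^{n_l})$. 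Lemma \ref{lem: equicontinuity} then shows that $A^{n_l}(\hx^{n_l})$ is equicontinuous on a neighborhood of this point, so $\nu_3 = A^{n_l}(\hx^{n_l})_* \nu_2$ remains concentrated; choosing $\delta$ small compared to the modulus provided by Lemma \ref{lem: equicontinuity}, the final pushforward is $\varepsilon$-concentrated.

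The main obstacle is the first step: establishing that every subsequential quasi-projective limit $Q$ of the normalizations of $A^l(\hx^{n_l+k_i+l})$ has image exactly $\{E^u(\hp)\}$. The points $\hsigma^j(\hx^{n_l+k_i+l})$ shadow $\hp$ only for $l-j$ symbols, so the Hölder error $\|A(\hsigma^j(\hx^{n_l+k_i+l})) - A(\hp)\|$ is merely of order $2^{-\alpha(l-j)}$, which is not small when $j$ is close to $l$. One must rely on fiber-bunching to control the compounding of these errors as the cocycle is iterated, and exploit the quasi-projective (rather than norm) nature of the convergence of $A(\hp)^l$; this is the step where the hypotheses on $\hmu$ (existence of a pinched measure, via the periodic point $\hp$) and on $\hA$ (fiber-bunching and strong irreducibility, via Proposition \ref{smoothconditionals}) interact most delicately.
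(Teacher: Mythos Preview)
Your three–stage decomposition and the handling of the second and third factors via Proposition \ref{prop:twisting_sets}(c),(d) and Lemma \ref{lem: equicontinuity} match the paper's argument almost exactly. The gap you correctly isolate is in the first step, and your proposed resolution---directly analyzing the quasi-projective limit of $A^l(\hx^{n_l+k_i+l})$ via shadowing estimates---is unnecessarily hard; the errors you worry about are real obstacles for that route.

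The paper bypasses this entirely with the following observation. Because $\hx^{n_l+k_i}\in C^u_l(\hp)$, the point $\hx^{n_l+k_i+l}$ lies in the \emph{same local stable leaf} as $\hz^l=\hsigma^{-l}(\hz)$, where $\hz=[\hx^{n_l+k_i},\hp]\in W^u_{loc}(\hp)$. Since the reduced cocycle is constant along local stable leaves, one has the \emph{equality}
\[
A^l(\hx^{n_l+k_i+l})_*\, m^{\eta}_{x^{n_l+k_i+l}} \;=\; A^l(\hz^l)_*\, m^{\eta}_{z^l},
\]
with no error term at all. Now factor
\[
A^l(\hz^l)_*\, m^{\eta}_{z^l} \;=\; \bigl(A^{-l}(\hz)^{-1}A^{-l}(\hp)\bigr)_*\,A^l(\hp)_*\, m^{\eta}_{z^l}.
\]
The bracketed factor converges (uniformly in $\hz\in W^u_{loc}(\hp)$) to the unstable holonomy $H^u_{\hp\hz}$, while $A^l(\hp)_* m^{\eta}_{z^l}\to \delta_{E^u(\hp)}$ because $m^{\eta}_{z^l}\to m^{\eta}_p$ in the weak-$*$ topology and $m^{\eta}_p$ gives zero mass to the hyperplane section defined by $E^s(\hp)$ (Proposition \ref{smoothconditionals} and Lemma \ref{lem: quasiprojectiveconvergence }). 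Hence $\nu_1\to \delta_{H^u_{\hp\hz}E^u(\hp)}=\delta_{E^u(\hz)}$ by Lemma \ref{lem:unstable_bundles_cts_along_unstable_leaves}, giving the required concentration for all large $l$. The point is that the ``bad'' accumulation of errors never arises: the equality coming from constancy on stable leaves replaces the orbit segment of $\hx$ by one along $W^u_{loc}(\hp)$, where the holonomy factorization is exact.
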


    We will now show how to conclude given the claim. Since $n_l > l$, by passing to a subsequence of $n_l$ we may assume that $n_{l+1} > n_l+k_i(l)+l$, where $k_i(l)$ is as in the claim, and that the point around which the masses $\varepsilon$-concentrate converges to some $\xi^u(\hx) \in \Gr(k, d)$, by compactness of the Grassmannian, from which we conclude that
    $$\lim_{l \to \infty} A^{n_l+k_i+l}(\hx^{n_l+k_i+l})_* m^{\eta}_{x^{n_l+k_i+l}} = \delta_{\xi^u(\hx)},$$
    and the proof will be completed. Hence we are done once we prove the claim about $\varepsilon$-concentration of masses.

\begin{proof}[Proof of Claim \ref{claim:eps_concentrated}.]

We begin my fixing some constants. Fix some small $\varepsilon>0$. From Proposition \ref{prop:twisting_sets}, there exists a fixed $\theta_0>0$ such that the conclusions of that proposition hold.
For each $l$, we let $R_i^{l}$ denote the collection of rectangles obtained from Proposition \ref{prop:twisting_sets} applied with that fixed choice of $\theta_0>0$.
     By definition of $n_l$, $\hx^{n_l} \in R_i^l$ for some $1 \leq i \leq N$ and a rectangle obtained from Proposition \ref{prop:twisting_sets}. Hence $\hx^{n_l + k_i} \in C^u_l(\hp)$, where $k_i$ is also given by Proposition \ref{prop:twisting_sets}.

By Lemma \ref{lem: equicontinuity}, given this choice of $\varepsilon>0$ and $\theta_0>0$ there some exists $\delta_1>0$ such that if $m$ is a measure on $\Gr(k,d)$, $A\in \GL(d,\R)$, and $m$ is $\delta_1$ concentrated at a point $V\in \Gr(k,d)$ that is $\theta_0$ away from $H$, a geometric hyperplane defined by a subspace containing the most contracting subspace $S_q(A)$ of $A$ as defined in Lemma \ref{lem: equicontinuity}, then $A_*m$ is $\varepsilon$-concentrated. Note that this $\delta_1$ is not literally the $\delta$ from that lemma but may be smaller to ensure that the conclusion of the lemma applies to the set where $m$ is concentrated.

Let $\alpha=\min\{\theta_0/2,\delta_1\}>0$.

We will show $\varepsilon$-concentration by breaking down $$A^{n_l+k_i+l}(\hx_{n_l+k_i+l})_* m^{\eta}_{x_{n_l+k_i+l}}$$ into
\[A^{n_l}(\hx^{n_l})_*A^{k_i}(\hx^{n_l+k_i})_*A^{l}(\hx^{n_l+k_i+l})_* m^{\eta}_{x^{n_l+k_i+l}},\]
    and pushing the measure through each of these terms in order. We give each of these terms names:
    \begin{align}
    m_1&=A^l(\hx^{n_l+k_i+l)})_*m^{\eta}_{x^{n_l+k_i+l}}\\
    m_2&=A^{k_i}(\hx^{n_l+k_i})_*m_1\\
    m_3&=A^{n_l}(\hx^{n_l})_*m_2.
    \end{align}


The proof then breaks down into a sequence of three claims, which follow from our choice of constants at the beginning of the proof.

\vspace{.2cm}
   \noindent {\bf Subclaim 1.} The measure $m_1$ is $\alpha$-concentrated around $E^u(\hz)$, where $\hz=W^s_{loc}(\hx^{n_l+k_i})\cap W^u_{loc}(\hp)$.
    \begin{proof}
       By definition $\hat{z}\in W^s_{loc}(\hat{x}^{n_l+k_i})$. But note that as $n_l$ is an $l$-good time that $\hat{x}^{n_l+k_i}\in C^u_l(\hp)$. This implies that $\hat{x}^{n_l+k_i+l}$ and $\hat{z}^l$ lie in the same local stable leaf. Thus as the cocycle is constant along local stable leaves,
    \begin{equation}\label{eqn:equal_pasts}
A^{l}(\hx^{n_l+k_i+l})_* m^{\eta}_{x^{n_l+k_i+l}} = A^{l}(\hz^l)_* m^{\eta}_{z^l},
    \end{equation}
hence it suffices to show that the right hand side of \eqref{eqn:equal_pasts} is $\alpha$-concentrated around $E^u(\hat{z})$. To see this, note that 
that
    \[
    A^{l}(\hz^l)_* m^{\eta}_{z^l} = (A^{-l}(\hz)^{-1} A^{-l}(\hp))_* A^{l}(\hp)_* m^{\eta}_{z^l}.
    \]
     The expression in parenthesis in the previous line converges to $H^u_{\hp \hz}$ as $l \to \infty$. Moreover, $A^{l}(\hp)_* m^{\eta}_{z_l}$ converges to $\delta_{E^u(\hp)}$ since $m^{\eta}_{z_l}$ converges weak* to $m^{\eta}_{\hp}$ and the latter gives 0 measure to hyperplane sections. Hence by Lemma \ref{lem: quasiprojectiveconvergence }, we see that the right-hand side of \eqref{eqn:equal_pasts} converges to $\delta_{H^u_{\hp\hz}E^u(\hp)}$.

     Therefore, for $l$ sufficiently large it follows that $$m_1 := A^{l}(\hz_l)_* m^{\eta}_{z_l}$$ is $\alpha$-concentrated around $H^u_{\hp \hz} E^u(\hp) = E^u(\hz)$ (Lemma \ref{lem:unstable_bundles_cts_along_unstable_leaves}), as desired. 
    \end{proof}

    \noindent {\bf Subclaim 2.} The measure $m_2$ is $\delta_1$-concentrated at a point at least $\theta_0$ away from $H=H_{\hx^{n_l}}$, where $H$ is defined as in the definition of $\ell$-good.
    \begin{proof}
    From Subclaim 1, $m_1$ is $\alpha$-concentrated at $E^u(\hz)$. The application of Proposition 4.4 gives that 
    \[
    d_{\Gr}(A^{k_i}(\hx^{n_l+k_i})E^u(\hz),H_{\hx^{n_l}})>\theta_0,
    \]
    and that $A^{k_i}(\hx^{n_l+k_i})$ is $1$-Lipschitz in a $\theta_0$ neighborhood of $E^u(\hz)$. Thus as $\alpha<\theta_0$, by the $1$-Lipschitzness $m_2$ is $\alpha$-concentrated at a point $\theta_0$ away from $H$. As $\alpha\le \delta_1$ we are done.
    \end{proof}
    
    \noindent {\bf Subclaim 3.} The measure $m_3$ is $\varepsilon$-concentrated.
    \begin{proof}
    From the conclusion of Subclaim 2, $H$ is geometric hyperplane defined by a subspace containing the appropriate most contracting subspace of $A^{n_l}(\hx^{n_l})$ specified by Lemma \ref{lem: equicontinuity}. Thus by our choice of $\delta_1$ in the first paragraph,
    as $m_2$ is $\delta_1$-concentrated $\theta_0$ away from $H$, it follows that $m_3$ is $\varepsilon$-concentrated.
    \end{proof}
    
    We are now done with the proof of Claim \ref{claim:eps_concentrated} as Subclaim 3 is the needed conclusion.
\end{proof}

Having finished the proof of Claim \ref{claim:eps_concentrated}, we obtain the proof of Proposition \ref{prop: convergencetodirac} by the discussion after the statement of that claim.
\end{proof}

\section{Conclusion of Proof of Theorem \ref{theorem: maintheorem}} \label{sec: proof}
In Proposition \ref{prop: convergencetodirac} from the previous section we obtained a measurable $\xi^u\colon \hSigma \to \Gr(k, d)$ defined $\heta$ almost everywhere. This section satisfies the same properties as the map $\xi^u$ considered in \cite[Proposition 6.4]{avila2007simplicity} where it is shown to have the following properties for $\heta$-almost every $\hx \in \hSigma$:
\begin{enumerate}
    \item [(i)] $\xi^u$ is invariant under the cocycle and the unstable holonomies.
    \item [(ii)] $\sigma_{k}(\hA^n(\hsigma^{-n}(\hx)))/\sigma_{k+1}(\hA^n(\hsigma^{-n}(\hx))) \to \infty$, and the image of the $k$-dimensional subspace most expanded by $\hA^n(\hsigma^{-n}(\hx))$ converges to $\xi^u(\hat{x})$. 
\end{enumerate} 

The idea of the rest of the proof of Theorem \ref{theorem: maintheorem} is to show that the section $\xi^u$ is the sum of the $k$ strictly largest Osceledec subbundles for $\hA$ with respect to $\heta$. To do so, we first obtain a $\heta$-almost everywhere  defined section $\xi^T\colon \hSigma \to \Gr(k, d)$ by the same argument as in the proof of Proposition \ref{prop: convergencetodirac}, whose orthogonal complement $\xi^s$ is a candidate sum of the $(d-k)$ strictly smallest Osceledec subbundles for $\hA$ with respect to $\heta$. Finally, we show that indeed the Lyapunov exponents along $\xi^u$ are strictly greater than those of $\xi^s$.

To obtain $\xi^T$, we apply Proposition \ref{prop: convergencetodirac} to the \textit{adjoint cocycle} to $\hA$, which is defined by $\hat{B}(\hx) := \hA(\hsigma^{-1}(\hx))^T$, where the $T$ superscript denotes transpose with respect to a fixed inner product on $\R^d$. By definition, the adjoint cocycle $\hat{B}$ is a cocycle over $\hsigma^{-1}$. It is easily verified \cite[Section 7.1]{avila2007simplicity} that the adjoint cocycle is also fiber-bunched and satisfies the integrability hypothesis of Oseledec' theorem. From the assumption $\lambda_{k}(\hA, \hmu) > \lambda_{k+1}(\hA, \hmu)$, it is easy to see that $\lambda_{k}(\hB, \hmu) > \lambda_{k+1}(\hB, \hmu)$ since $\hA$ and $\hB$ have the same singular values. Moreover:
\begin{claim} For any $1 \leq k \leq d-1$, the cocycle $\hA$ is strongly irreducible on $\Gr(k, d)$ if and only if the adjoint cocycle $\hB$ is strongly irreducible on $\Gr(d-k, d)$.
\end{claim}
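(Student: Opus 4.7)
The plan is to establish a bijection between $\hat A$-invariant continuous families of $k$-linear arrangements on $\hat\Sigma$ and $\hat B$-invariant continuous families of $(d-k)$-linear arrangements on $\hat\Sigma$ via the orthogonal complement map $V\mapsto V^\perp$ on Grassmannians.

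First I would verify that orthogonal complement sends linear arrangements to linear arrangements. At the level of the Pl\"ucker embedding, the map $V\mapsto V^\perp$ is induced by the Hodge star $\ast\colon\Lambda^k(\R^d)\to\Lambda^{d-k}(\R^d)$ associated to the fixed inner product on $\R^d$: if $V\in\Gr(k,d)$ has decomposable representative $v_1\wedge\cdots\wedge v_k$, then $\ast(v_1\wedge\cdots\wedge v_k)$ is a scalar multiple of a decomposable representative of $V^\perp$. Since $\ast$ is a linear isomorphism, for any subspace $S\subseteq\Lambda^k(\R^d)$ the image $\ast S\subseteq\Lambda^{d-k}(\R^d)$ is a linear subspace of the same dimension, and
$$
\{V^\perp:V\in\P S\cap\Gr(k,d)\}=\P(\ast S)\cap\Gr(d-k,d).
$$
Hence $V\mapsto V^\perp$ is a homeomorphism that bijects $k$-linear arrangements with $(d-k)$-linear arrangements, preserves dimensions of sections, sends non-trivial arrangements to non-trivial ones, and carries continuous families to continuous families in the sense of Section~\ref{sec:smoothness}.

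Second I would apply the elementary identity $(AV)^\perp=(A^T)^{-1}V^\perp$ for $A\in\GL(d,\R)$ and $V\in\Gr(k,d)$, which is immediate from the definition of transpose. Suppose $\cL$ is a continuous non-trivial $\hat A$-invariant family of $k$-linear arrangements, so $\hat A(\hat x)_\ast\cL(\hat x)=\cL(\hat\sigma\hat x)$. Define $\cL^\perp(\hat x):=\cL(\hat x)^\perp$. Applying the identity fiberwise gives
$$
(\hat A(\hat x)^T)^{-1}\cL^\perp(\hat x)=\cL^\perp(\hat\sigma\hat x),
$$
or equivalently $\hat A(\hat x)^T\cL^\perp(\hat\sigma\hat x)=\cL^\perp(\hat x)$. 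Setting $\hat y=\hat\sigma\hat x$ and using $\hat B(\hat y)=\hat A(\hat\sigma^{-1}\hat y)^T$, this rewrites as
$$
\hat B(\hat y)_\ast\cL^\perp(\hat y)=\cL^\perp(\hat\sigma^{-1}\hat y),
$$
which is precisely the invariance of the continuous non-trivial $(d-k)$-linear arrangement family $\cL^\perp$ under the cocycle $\hat B$ over $\hat\sigma^{-1}$. Combined with step one, this shows that strong reducibility of $\hat A$ on $\Gr(k,d)$ implies strong reducibility of $\hat B$ on $\Gr(d-k,d)$.

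For the converse, observe that the adjoint of $\hat B$ viewed as a cocycle over $\hat\sigma^{-1}$ is $\hat B(\hat\sigma\hat x)^T=(\hat A(\hat x)^T)^T=\hat A(\hat x)$, so the pairs $(\hat A,\hat\sigma,k)$ and $(\hat B,\hat\sigma^{-1},d-k)$ play symmetric roles and the same argument with $\hat A$ and $\hat B$ interchanged proves the other implication. The only real obstacle is bookkeeping: keeping careful track of which cocycle is over $\hat\sigma$ versus $\hat\sigma^{-1}$, and aligning pushforward conventions under transposition. Once the Hodge star / orthogonal complement correspondence of paragraph one is in place, the invariance computation itself is essentially one line.
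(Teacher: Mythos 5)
Your argument is the same Hodge-star/orthogonal-complement bijection that the paper uses, and it is correct. In fact your version is more careful than the paper's: the paper asserts the identity ``$\star A\xi = A^T\star\xi$'' for subspaces $\xi$, which taken literally is wrong --- the correct relation, which you state, is $(AV)^\perp = (A^T)^{-1}V^\perp$, equivalently (projectively) $\star(\Lambda^k A) = (\Lambda^{d-k}A^{-T})\star$ up to the scalar $\det A$; if one ran the paper's computation with its stated identity one would land on $\hat A^{-1}$-invariance rather than $\hat A$-invariance. Your bookkeeping with $(\hat A(\hat x)^T)^{-1}$ fixes this and yields the intended conclusion cleanly, and your observation that the duality is involutive (the adjoint of $\hat B$ over $\hsigma^{-1}$ is $\hat A$ over $\hsigma$) makes the ``by duality it suffices to prove one direction'' step explicit rather than implicit.
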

\begin{proof} By duality it suffices to show one of the directions, so we prove that $\hA$ is strongly irreducible on $\Gr(k, d)$ if $\hB$ is strongly irreducible on $\Gr(d-k, d)$.

Suppose for contradiction that $\hB$ is not strongly irreducible on $\Gr(d-k, d)$, i.e., that there exists a non-trivial continuous family of $(d-k)$-linear arrangements $\cL$ invariant by $\hB$. We define $\cL^T$ to be a continuous family of $k$-linear arrangements given by $\cL^T = \star \cL$, where $\star$ is the usual Hodge star map  $\Lambda^{d-k} \R^d \to \Lambda^{k} \R^d$ restricted to the Grassmannian. Since the Hodge star map is linear, we see that $\cL^T$ is by definition a continuous family of $k$-linear arrangements, and since $\star$ sends rank 1 vectors to rank 1 vectors $\cL^T$ must be non-trivial. 

Moreover, since restricted to the Grassmannians the Hodge star maps subspaces to their orthogonal complements, and the transpose cocycle has the property that $\star A\xi = A^T \star \xi$ for all subspaces $\xi$, we see that $\cL^T$ must be invariant by $\hA$. This contradicts strong irreducibility of $\hA$ on $\Gr(k, d)$.
\end{proof}

Therefore we may apply Proposition \ref{prop: convergencetodirac} to the cocycle $\hB$ and obtain a section $\xi^T\colon \hSigma \to \Gr(k, d)$ and let $\xi^s\colon \hSigma \to \Gr(d-k, d)$ be obtained by taking the orthogonal complement of $\xi^T$. The proof of the Lyapunov exponent gaps of $\xi^u$ and $\xi^s$ now follows in the exact same way as the proof of the main theorem in \cite[Section 7.2]{avila2007simplicity}. For the reader's convenience we include an outline below:

\begin{proof}[Outline of proof of Theorem \ref{theorem: maintheorem}] The proof is broken down into several steps:

\vspace{.2cm}
\textbf{Step 1:} $\xi^u(\hx)$ and $\xi^s(\hx)$ intersect transversely for $\heta$-almost every $\hx$. 
\begin{proof}
    Recall that for the reduced cocycle the stable holonomies are trivial. Hence for $\heta$-almost every $\hx$, $\xi^s$ is constant on $W^s_{loc}(\hx)$. Fix one such $\hx$ and let $H$ be the hyperplane section defined by $\xi^s$ on $\Gr(k, d)$. By Proposition \ref{smoothconditionals} and Proposition \ref{prop: ustateslieondirac}, the distribution of $\xi^u$ on $W^s_{loc}(\hx)$ avoids a fixed hyperplane section $\heta^s_{\hx}$-almost everywhere. In particular it avoids $H$, so that $\xi^u$ and $\xi^s$ intersect transversely $\heta^s_{\hx}$-almost everywhere on $W^s_{loc}(\hx)$. Since this holds for $\heta$-almost every $\hx$, the proof is complete.
\end{proof}

Thus we have $\heta$-a.e.~defined $\hat{A}$ invariant transverse sections $\xi^u$ and $\xi^s$, so each of these bundles has a splitting into subspaces at $\hat{\eta}$-a.e.~point. Let $\xi_1 \subseteq \xi^u$ be the bundle corresponding to the weakest Lyapunov subspace in $\xi^u$ and let $\xi_2 \subseteq \xi^s$ be the bundle corresponding to the strongest Lyapunov subspace in $\xi^s$. Let $d_1, d_2$ be the dimensions and $\lambda_1, \lambda_2$ be the exponents of $\xi_1$ and $\xi_2$. The proof will be completed if we show $\lambda_1 > \lambda_2$.

Define for $n \geq 0$
$$\Delta^n(\hx) = \frac{\det(\hA^n(\hx), \xi_1(\hx))^{1/d_1}}{\det(\hA^n(\hx), \xi_1(\hx)\oplus \xi_2(\hx))^{1/(d_1+d_2)}},$$
where $\det(A, V)$ denotes the determinant of $A$ restricted to $V$. Then by Oseledec' theorem:
$$\lim_{n \to \infty} \frac{1}{n}\log  \Delta^n(\hx) = \frac{d_2}{d_1+d_2}(\lambda_1 - \lambda_2),$$
so it suffices to show that $ \log \Delta^n(\hx)$ grows linearly. 

\vspace{.2cm}
\textbf{Step 2:} For $\heta$ almost every $\hx$
\begin{equation}\label{eqn:diverges_to_infty}
\lim_{n \to \infty} \Delta^n(\hx) = +\infty.
\end{equation}
\begin{proof}[Proof sketch.] The idea of the proof is to use the defining property of $\xi^u$ which we mentioned in the beginning of the section, namely that $$\sigma_{k}(\hA^n(\hsigma^{-n}(\hx)))/\sigma_{k+1}(\hA^n(\hsigma^{-n}(\hx))) \to \infty,$$ and that the image of the $k$-dimensional subspace most expanded  by $\hA^n(\hsigma^{-n}(\hx))$ converges to $\xi^u(\hat{x})$. This, combined with the transversality statement of Step 1, suffices to show divergence of the ratio of the determinants in the definition of $\Delta^n$ and obtain Equation \eqref{eqn:diverges_to_infty}. For details, see \cite[Proposition 7.3]{avila2007simplicity}. 
\end{proof}

\vspace{.3cm}

\textbf{Step 3:} To conclude one uses the following well-known result. Let $T\colon X \to X$ be a measurable transformation preserving a probability measure $\nu$ in $X$, and $\phi\colon X \to  \R$ be a $\nu$-integrable function such that $$\lim_{n \to \infty} \sum_{j = 0}^n (\phi \circ T^j)(x) = \infty$$ at $\nu$-almost every point. Then $\int_X \phi \, d\nu > 0$. 

The proof is then finished by applying the statement above to $\log \Delta(x)$ and applying the Birkhoff ergodic theorem.
\end{proof}

\section{Furstenberg's Theorem} \label{sec: furstenberg}

In this section we show an analog of Furstenberg's theorem in the fiber bunched setting. Compare the following theorem with \cite[Thm.~1]{bonatti2003}, which offers a similar alternative between every measure being hyperbolic and there existing a holonomy invariant measure on the projective extension of the cocycle. The measure obtained in that theorem represents the flags obtained in the proof below.

\begingroup
\def\thetheorem{\ref{thm:furstenberg_analog}}
\begin{theorem}
Suppose that $\hat{\Sigma}$ is a transitive hyperbolic system and that $\hat{A}\colon \Sigma\to \GL(d,\R)$ is a fiber-bunched H\"older continuous cocycle that is strongly irreducible on $\Gr(1,d)$. Then exactly one of the following holds:
\begin{enumerate}
\item
For every measure $\hat{\mu}$ with full support and continuous product structure $\hat{\mu}$ has two distinct Lyapunov exponents, or;
\item
$\hat{A}$ preserves a H\"older continuous conformal structure, and thus every invariant measure has all Lyapunov exponents equal to $0$.
\end{enumerate}
\end{theorem}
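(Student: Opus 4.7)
Conclusions (1) and (2) are mutually exclusive: if $\hat A$ preserves a H\"older conformal structure while $\hat A\in\SL(d,\R)$, then for any invariant measure the $d$ Lyapunov exponents all coincide and, since $\det\hat A\equiv 1$, equal $0$, ruling out (1). The substantive content is the converse. Suppose (1) fails and fix a fully supported invariant $\hat\mu$ with continuous local product structure whose Lyapunov exponents all coincide; their common value is necessarily $0$.

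The plan is a fiber-bunched analogue of Furstenberg's classical alternative, carried out in three stages. \textbf{Stage A:} produce a measure $\hat m^\ast$ on $\hat\Sigma\times\mathbb{RP}^{d-1}$ that is invariant under $\hat A$ and under both the stable and unstable holonomies, with disintegrations $\hat m^\ast_{\hat x}$ still giving zero mass to every hyperplane section. Start with a $u$-state $\hat m$ over $\hat\mu$; by Proposition \ref{smoothconditionals} its disintegrations are non-degenerate in this sense. Vanishing of the Lyapunov exponents obstructs the Dirac collapse of Proposition \ref{prop: convergencetodirac}. Applying the $u$-state construction to the adjoint cocycle $\hat B(\hat x)=\hat A(\hat\sigma^{-1}\hat x)^T$ over $\hat\sigma^{-1}$ (as in Section \ref{sec: proof}) yields an $s$-state, and a Markov--Kakutani fixed-point argument for the commuting holonomy-averaging operators on the nonempty convex weak-$\ast$ compact set of cocycle-invariant probability measures projecting to $\hat\mu$ produces the desired $\hat m^\ast$. \textbf{Stage B:} bi-holonomy invariance, combined with the H\"older regularity of $H^u, H^s$ (Proposition \ref{prop:holonomies_exist}) and continuous local product structure, upgrades $\hat x\mapsto\hat m^\ast_{\hat x}$ to a H\"older continuous map in the weak-$\ast$ topology.

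\textbf{Stage C:} extract the conformal structure. Define $Q(\hat x)=\int_{S^{d-1}}u\otimes u\,d\tilde m^\ast_{\hat x}(u)$, using a symmetrized lift $\tilde m^\ast_{\hat x}$ of $\hat m^\ast_{\hat x}$ to the sphere; non-degeneracy of $\hat m^\ast_{\hat x}$ makes $Q(\hat x)$ positive definite. Cocycle invariance of $\hat m^\ast$ combined with vanishing Lyapunov exponents (which forces the log-Jacobian of $u\mapsto\hat A(\hat x)u/\|\hat A(\hat x)u\|$ to be a coboundary up to an additive constant) yields the conformal relation $\hat A(\hat x)^T Q(\hat\sigma\hat x)\hat A(\hat x)=c(\hat x)Q(\hat x)$ for a positive H\"older scalar $c$, completing (2). \textbf{Main obstacle:} Stage A. Producing a simultaneously $s$- and $u$-invariant non-degenerate measure is the crux; this is where the hypothesis of an $\hat\mu$ with all Lyapunov exponents equal is used essentially, both to rule out the Dirac collapse on the $u$-state side and to make the joint fixed-point argument feasible. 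Once Stage A is in place, Stages B and C follow from standard bi-holonomy regularity arguments and the classical moment construction of invariant conformal structures.
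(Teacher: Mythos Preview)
Your approach is entirely different from the paper's, which is a one-paragraph argument: assuming (1) fails, it invokes the structure theorem of Kalinin--Sadovskaya \cite[Thm.~3.4]{kalinin2013cocycles} for fiber-bunched cocycles with a single Lyapunov exponent. That theorem produces, on a finite cover, a continuous invariant flag with continuous conformal structures on the successive quotients; projecting the flag back to the original bundle gives a continuous invariant family of subspaces, and strong irreducibility on $\Gr(1,d)$ forces the flag to be trivial, so the conformal structure lives on all of $\R^d$. No measure on projective space is constructed.

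Your route has a genuine gap in Stage~C. Suppose you have the bi-holonomy-invariant, cocycle-invariant family $\hat m^\ast_{\hat x}$ and you set $Q(\hat x)=\int_{S^{d-1}} u\otimes u\,d\tilde m^\ast_{\hat x}(u)$. Cocycle invariance on the sphere means $\hat A(\hat x)_\ast \tilde m^\ast_{\hat x}=\tilde m^\ast_{\hat\sigma\hat x}$, where the action is $u\mapsto \hat A(\hat x)u/\|\hat A(\hat x)u\|$. Hence
\[
Q(\hat\sigma\hat x)=\int_{S^{d-1}}\frac{\hat A(\hat x)u\otimes \hat A(\hat x)u}{\|\hat A(\hat x)u\|^{2}}\,d\tilde m^\ast_{\hat x}(u),
\]
and this equals $c(\hat x)^{-1}\hat A(\hat x)\,Q(\hat x)\,\hat A(\hat x)^{T}$ only if the weight $\|\hat A(\hat x)u\|^{-2}$ is constant on the support of $\tilde m^\ast_{\hat x}$. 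Since that support spans $\R^d$ by non-degeneracy, this already forces $\hat A(\hat x)$ to be conformal, which is precisely what you are trying to prove. The remark that zero Lyapunov exponents make the log-Jacobian a coboundary is an \emph{ergodic-average} statement and does not remove the direction dependence of $\|\hat A(\hat x)u\|$ at any fixed $\hat x$, so it cannot rescue the identity. The second-moment trick works for iid products in Furstenberg's setting only after one already knows the stationary measure is invariant under a compact group; here that step is missing.

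Stage~A is also not as written: ``holonomy-averaging operators'' on the space of cocycle-invariant lifts of $\hat\mu$ are not well-defined linear maps that commute, so Markov--Kakutani does not apply directly. The usual route (as in \cite{bonatti2003}) is to show that when the exponents vanish every $u$-state is automatically an $s$-state; this is doable but is a real argument, not a fixed-point formality. Even granting Stage~A and Stage~B, the moment construction in Stage~C would still fail as above, so you would need a different mechanism---essentially the content of the Kalinin--Sadovskaya theorem the paper cites---to extract the conformal structure from bi-invariance.
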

\addtocounter{theorem}{-1}
\endgroup

\begin{proof}
Suppose that there exists some other such measure that has only one Lyapunov exponent. Then by \cite[Thm.~3.4]{kalinin2013cocycles}, there exists a finite cover $\wt{\mc{E}}$ of the cocycle on which we have that the cocycle preserves a continuous flag 
\[
\{0\}=\wt{\mc{E}}^0\subset \cdots \subset \wt{\mc{E}}^k=\wt{\mc{E}},
\]
and continuous invariant conformal structures on each quotient $\wt{\mc{E}}^{i+1}/\wt{\mc{E}}^i$. This flag projects down to a collection of invariant subspaces contained in the original bundle $\mc{E}$. Hence strong irreducibility implies that $\wt{\mc{E}}^1=\wt{\mc{E}}^k$, so the original cocycle preserves a conformal structure. Hence all invariant measures on $\hSigma$ have a single Lyapunov exponent and the result follows.
\end{proof}

Note that the above theorem gives no information on whether there are more than two exponents.

\bibliographystyle{amsalpha}
\bibliography{biblio.bib}

\end{document}